\newtheorem{lemma}{Lemma}[section]
\newtheorem{theorem}[lemma]{Theorem}
\newtheorem{corollary}[lemma]{Corollary}
\newtheorem{proposition}[lemma]{Proposition}
\newtheorem{remark}[lemma]{Remark}
\newtheorem{example}[lemma]{Example}
\newcommand{\Aut}{\operatorname{Aut}}
\newcommand{\face}{\mathrel{\unlhd}}
\newcommand{\minFace}{\mathrm{Face}}
\newcommand{\spanVec}{\ensuremath{\mathrm{span}\,}}
\newcommand{\reInt}{\mathrm{ri}\,}
\newcommand{\norm}[1]{\lVert{#1}\rVert}
\newcommand{\T}{*} 
\newcommand{\inProd}[2]{\langle #1 , #2 \rangle }
\newcommand{\cS}{\mathcal{S}}
\newcommand{\cH}{\mathcal{H}}
\newcommand{\psdcone}[1]{\cS_+^{#1}}
\newcommand{\psdG}[1]{\cS_+({#1})}
\renewcommand{\H}{\cH}
\newcommand{\RR}{\mathbb{R}}
\newcommand{\SC}{\operatorname{SC}}
\DeclareMathOperator{\rank}{rank}
\newcommand{\tr}{\textup{tr}}
\newcommand{\stdCone}{ {\mathcal{K}}}
\newcommand{\stdFace}{{\mathcal{F}}}
\newcommand{\ambSpace}{ \mathcal{E} }
\renewcommand{\Re}{\mathbb{R}}   
\newcommand{\alg}{\mathcal{A}}
\newcommand{\algD}{\mathcal{B}}
\newcommand{\UT}{\mathcal{T}}
\newcommand{\LT}{\mathcal{L}}
\newcommand{\HE}{\mathcal{H}}
\newcommand{\stdHC}{ {\mathcal{K}(\alg)}}
\newcommand{\id}{{e}}
\newcommand{\closure}{\mathrm{cl}\,}
\title{Faces of homogeneous cones and applications to homogeneous chordality}
\numberwithin{equation}{section}
\author{
    Jo\~ao Gouveia\thanks{CMUC, Department of Mathematics, University of Coimbra, 3001-454 Coimbra, Portugal.
    This author was supported by the Centre for Mathematics of the University of Coimbra (UIDB/00324/2020,
    funded by the Portuguese Government through FCT/MCTES).
    During part of this work, the author also held a visiting professorship at the Institute of Statistical Mathematics, Japan.
    Email: \href{jgouveia@mat.uc.pt }{jgouveia@mat.uc.pt}.}
	\and 
	Masaru Ito%
	\thanks{Department of Mathematics, College of Science and Technology, Nihon University,
		1-8-14 Kanda-Surugadai, Chiyoda-Ku, Tokyo 101-8308, Japan.
		This author
		was supported partly by the JSPS Grant-in-Aid for Early-Career Scientists 	21K17711.
		Email: \href{ito.masaru@nihon-u.ac.jp}{ito.masaru@nihon-u.ac.jp}.}
 \and
	Bruno F. Louren\c{c}o\thanks{Department of Fundamental Statistical Mathematics, Institute of Statistical Mathematics, Japan.
	This author was supported partly by the JSPS Grant-in-Aid for Early-Career Scientists  23K16844.
	Email: \href{bruno@ism.ac.jp}{bruno@ism.ac.jp}.}
}
\begin{document}	
\maketitle
\begin{abstract}
A convex cone $\stdCone$ is said to be homogeneous if its group of automorphisms acts transitively on its relative interior. 
Important examples of homogeneous cones include symmetric cones and cones of positive semidefinite (PSD) matrices that follow a sparsity pattern given by a homogeneous chordal graph.
Our goal in this paper is to elucidate  the facial structure of homogeneous cones and make it as transparent as the faces of the PSD matrices.
We prove  that each face of a homogeneous cone $\stdCone$ is mapped by an automorphism of $\stdCone$ to one of its finitely many so-called \emph{principal faces}.
Furthermore, constructing such an automorphism can be done algorithmically by making use of a generalized Cholesky decomposition.
Among other consequences, we give a proof that homogeneous cones are projectionally exposed, which strengthens the previous best result that they are amenable.
Using our results, we will carefully analyze the facial structure of  cones of PSD matrices satisfying homogeneous chordality and discuss consequences for the corresponding family of PSD completion problems. 
 
\end{abstract}
{\small{\bfseries Keywords:} Homogeneous cones,  facial structure, projectional exposedness, homogeneous chordal graphs, PSD completion}

%
%
%
%
%
%
%
%

\section{Introduction}
A convex cone is said to be \emph{homogeneous} if its group of automorphisms acts transitively on its relative interior. 
Conic programming over homogeneous cones is a cornerstone of the modern optimization landscape as it contains as special cases semidefinite programming (i.e., optimization over positive semidefinite matrices) and linear programming (i.e., optimization over the nonnegative orthant).
However, the positive semidefinite matrices and nonnegative orthants are only merely two very special examples of homogeneous cones, as they are also self-dual under an appropriate choice of inner product and, therefore, are \emph{symmetric cones}.

An important class of homogeneous but not necessarily symmetric cones include the positive semidefinite matrices with certain sparsity patterns.
Let $G$ be a graph on $n$ vertices and consider the cone
$\psdG{G}$ of $n\times n$ real symmetric positive semidefinite (PSD) matrices whose sparsity pattern follows $G$, i.e., 
$ x \in \psdG{G}$ if and only if
$x$ is an $n\times n$ PSD matrix with
$x_{ij} = 0$ for all
$i \neq j$ such that $(i,j)$ is not an edge of $G$.
The case where $G$ is a chordal graph has been extensively studied in the literature as it is typically more efficient to solve problems exploiting  chordal sparsity instead of seeing it simply as a SDP instance, e.g., \cite{FKMN01,NFFKM03}. 
The dual of $\psdG{G}$ is also important, since it corresponds to the PSD completable matrices with pattern given by $G$. 

If $G$ is not only chordal but has no induced subgraph that is a path on four vertices, then $\psdG{G}$ becomes a homogeneous cone \cite{Ishi13} and such graphs were called \emph{homogeneous chordal} in \cite{TV23}. PSD matrices that follow a homogeneous chordal 
sparsity pattern have remarkable properties that may fail  in general for chordal graphs.
A notable example is that, under an appropriate ordering of the vertices, inverses of  Cholesky factors still respect the sparsity pattern defined by $G$, see \cite[Theorem~3.1]{TV23} for more details.
Therefore, for homogeneous chordal graphs, $\psdG{G}$ and its dual form important classes of homogeneous cones that have practical relevance and potential for interesting applications. 

Motivated by the particular case of homogeneous chordality, our goal in this paper is to elucidate the facial structure of general homogeneous cones and make it as clear as the facial structure of  PSD matrices, which we will now briefly review.
Let $\psdcone{n}$ denote the set of real $n\times n$ symmetric positive semidefinite matrices.
If $\stdFace$ is a face of $\psdcone{n}$, then there exists exists a $n\times n$ orthogonal matrix $q$ and $r \leq n$ such that
\begin{equation}\label{eq:psd_faces}
q\stdFace q^\T = \left\{ \begin{pmatrix} a& 0\\ 0 & 0\end{pmatrix} \mid a \in \psdcone{r} \right\},
\end{equation}
where $q^\T$ denotes the transpose of $q$,
see \cite[Example~3.2.2]{pataki_handbook}, \cite[Section~6]{BC75}. That is, each face of $\psdcone{n}$ is linearly isomorphic to a smaller PSD cone and this isomorphism  can be realized as an automorphism of $\psdcone{n}$ since the map $Q$ that takes $x$ to $qxq^\T$ satisfies $Q(\psdcone{n}) = \psdcone{n}$.
The matrix $q$ does not come from thin air: 
the kernel of matrices in the relative interior of $\stdFace$ is unique and correspond to some fixed subspace $V \subseteq \Re^n$. 
With that, $\stdFace = \{x \in \psdcone{n} \mid \ker x \supseteq V\}$ holds.
Therefore, computing $q$ is an entirely constructive endeavour, as it is enough to pick any $x \in \reInt \stdFace$ and let $q$ be such that its rows are orthogonal and the last $n-r$ rows form an orthonormal basis for $\ker x$.

Another interesting property is that 
if we let $v \coloneqq  q^\T\begin{psmallmatrix} e_r & 0\\ 0 & 0\end{psmallmatrix} q $ , where $e_r$ is the $r\times r$ identity matrix and define the map $\mathcal{P}(x) \coloneqq vxv$, we have that 
$\mathcal{P}$ is a projection (i.e., $\mathcal{P}^2 = \mathcal{P}$) satisfying 
$\mathcal{P}(\psdcone{n}) = \stdFace$.
In this way, not only the faces of $\psdcone{n}$ are isomorphic to smaller PSD cones, they are also projected versions of it.
Cones for which each face arises as a projection of the original cone are known as \emph{projectionally exposed} and were first described by Borwein and Wolkowicz in the context of their facial reduction approach in \cite{BW81}.
One of the main goals of this paper is to prove analogous results to
 \eqref{eq:psd_faces} for homogeneous cones and discuss their consequences.

Another motivation for this work comes from the study of facial exposedness in general cones. 
We recall that a closed convex cone $\stdCone$ is said to be \emph{facially exposed} if every face arises as the intersection of $\stdCone$ with one of its supporting hyperplanes. 
Although facial exposedness is useful, certain applications require stronger facial exposedness properties such as \emph{niceness} (also known as facial dual completeness) \cite{Pa07,Pa13,Pa13_2}, \emph{amenability} \cite{L21,LRS20} or the aforementioned
{projectional exposedness} \cite{BW81,ST90}.

Every projectionally exposed (p-exposed) cone is amenable, every amenable cone is nice and every nice cone is facially exposed. 
In dimension at most three, facial exposedness implies p-exposedness, see \cite[Theorem~3.2]{PL88} or \cite[Theorem~4.6]{ST90}.
In dimension four, there exists a facially exposed cone that is not amenable \cite{Ve14} and a nice cone that is not amenable \cite{LRS20}. 

Regarding homogeneous cones, Truong and Tun\c{c}el showed that they are facially exposed \cite[Theorem~6]{TT03}. Later, 
Chua and Tun\c{c}el showed that homogeneous cones are nice 
\cite[Proposition~4 and Section~4.2]{CT06}.
Then, in \cite{LRS20} it was shown that homogeneous cones are amenable.
In this work, we take another step and show that homogeneous cones are, in fact, p-exposed. 
In this way, homogeneous cones form a large family of p-exposed cones and it is not currently known if there are any other interesting classes of p-exposed cones strictly containing
homogeneous cones.
Notably, homogeneous cones are also hyperbolicity cones \cite[Section~8]{Gu97}, which is another important class of 
cones \cite{Ga59,Re06}. 
However, the strongest result so far is that hyperbolicity cones are amenable \cite{LRS23} and it is not known whether they are p-exposed in general.

Our main results are as follows.
\begin{enumerate}[$(i)$]
	\item We show that results analogous to \eqref{eq:psd_faces} hold for general homogeneous cones, see Theorem~\ref{th:hface}. 
	In particular, given a homogeneous cone $\stdCone$ of rank $r$ (see Section~\ref{sec:hom}), there are $2^r$ distinguished faces called \emph{principal faces} that play a role similar to the right-hand-side of \eqref{eq:psd_faces} in the sense that
	every face $\stdFace$ of $\stdCone$ can be mapped to a principal face via an automorphism of $\stdCone$. 
	Similarly to the case of PSD matrices, the automorphism can be explicitly constructed given any $x \in \reInt \stdFace$ by making use of a generalized Cholesky factorization (Algorithm~\ref{alg:cholesky}).
	
	This will allow us to easily obtain certain results that are intuitive but are not trivial to extract from the existing literature. 
	For example, Theorem~\ref{th:hface} contains as a particular consequence the fact that proper faces of homogeneous cones are also homogeneous cone themselves and must have strictly smaller rank, which, as far as we know, has never been formally stated or verified previously.
	One application of this is the computation of the length of a longest chain of faces of homogeneous cones, see Corollary~\ref{col:chain}.
	We will also show that conjugate faces have complementary ranks, see Proposition~\ref{prop:conj_rank}.
	
	\item Among the consequences of our discussion, of particular note is the fact that homogeneous cones are projectionally exposed, which strengthens previous results in \cite{TT03,CT06,LRS20}. 
	
	\item We discuss  in detail the facial structure of PSD matrices with sparsity pattern given by a homogeneous chordal graph $G$, see Section~\ref{sec:chord}. 	
	In particular we show that every face of $\psdG{G}$ is isomorphic to some face that arises by considering induced subgraphs of $G$ and this isomorphism can be realized by an automorphism of $\psdG{G}$, see Theorem~\ref{theo:h_chord_faces}.
	We will also prove analogous results to the dual cone 
	of $\psdG{G}$ which corresponds to the PSD completable matrices determined by $G$.
	Then, we will check that when  Algorithm~\ref{alg:cholesky} is specialized to 
	the dual of $\psdG{G}$, it will allow us to compute PSD completions having certain desirable properties, see 
	Theorem~\ref{theo:completion}.
	In Remark~\ref{rem:failure} we will see that some of our results may fail if a graph is chordal but not homogeneous chordal.
	
\end{enumerate}

This work is divided as follows. In Section~\ref{sec:prel}, we recall some properties of convex sets and its faces and then we discuss the T-algebraic framework of Vinberg for handling homogeneous cones. Section~\ref{sec:faces} contains the main results of this paper regarding the facial structure of homogeneous cones. 
In Section~\ref{sec:chord}, we discuss several applications of the results in Section~\ref{sec:faces} to the facial structure of $\psdG{G}$ and its dual.
We conclude this work in Section~\ref{sec:conc} with a list of open questions and some remarks on how our discussion can be useful in facial reduction approaches.
 



\section{Preliminaries}\label{sec:prel}

Let $\ambSpace$ be a finite dimensional Euclidean space equipped with an inner product $\inProd{\cdot}{\cdot}$ and corresponding induced norm $\norm{\cdot}$. Let $C \subseteq \ambSpace$ be a convex set. 
We denote its relative interior and closure by $\reInt C$ and $\closure C$, respectively. The smallest subspace of $\ambSpace$ containing $C$ is denoted by $\spanVec C$.

Let $\stdCone \subseteq \ambSpace$ be a convex cone.
We say that $\stdCone$ is \emph{pointed} if 
$\closure \stdCone \cap - \closure \stdCone = \{0\}$ holds.
$\stdCone$ is said to be \emph{full-dimensional} if $\dim \stdCone = \dim \ambSpace$ or, equivalently, if $\stdCone$ has non-empty interior. A pointed full-dimensional cone is said to be \emph{regular}.
Two convex cones $\stdCone_1, \stdCone_2$ are said to be \emph{linearly isomorphic} if there exists a linear bijection $A$ such that $A(\stdCone_1) = \stdCone_2$ holds.

A convex cone $\stdFace \subseteq \stdCone$ is said to be a \emph{face} of $\stdCone$ if $x,y \in \stdCone$ and $x + y \in \stdFace$ implies that $x,y \in \stdFace$. In this case, we write $\stdFace \face \stdCone$ and, by convention, we will only consider non-empty faces. 
A face $\stdFace$ is said to be \emph{proper} if $\stdFace \neq \stdCone$ and it is \emph{maximally proper} if there is no 
proper face $\hat \stdFace \face \stdCone$ such that 
$\stdFace \subseteq \hat \stdFace $ and $\stdFace \neq \hat \stdFace$. An \emph{extreme ray} of $\stdCone$ is a face of the form $\stdFace = \{\alpha x \mid \alpha \geq 0\}$ for some nonzero $x \in \stdCone$. 
In this case, we say that \emph{$x$ generates an extreme ray of $\stdCone$}.


Next, let $x \in \stdCone$. We denote 
by $\minFace(x,\stdCone)$ the minimal face of $\stdCone$ containing $x$, i.e., $\minFace(x,\stdCone) = \bigcap _{\stdFace \face \stdCone, x \in \stdFace} \stdFace$. 
Let $\stdFace \face \stdCone$ be a face and $x \in \stdCone$, then 
\begin{equation}\label{eq:minF}
\stdFace = \minFace(x,\stdCone) \Leftrightarrow x \in \reInt \stdFace,
\end{equation}
e.g., see \cite[Proposition~3.2.2]{pataki_handbook} or \eqref{eq:minF} can also be inferred by the discussion in \cite{Ba73}.


The dual cone of $\stdCone$ is denoted by $\stdCone^* \coloneqq \{y \in \ambSpace \mid \inProd{x}{y} \geq 0, \forall x \in \stdCone\}$. 
A face $\stdFace \face \stdCone$ is said to be \emph{exposed} if there exists $y \in \stdCone^*$  such that $\stdFace = \stdCone \cap \{y\}^\perp$ holds.
In this case, we say that \emph{$y$ exposes the face $\stdFace$}.
The conjugate face of $\stdFace$ is defined as $\stdFace^\Delta \coloneqq \stdCone^* \cap \stdFace^\perp$ and satisfies
\begin{equation}\label{eq:face_conj}
\stdFace^\Delta = \stdCone^* \cap \{x\}^\perp, \qquad \forall x \in \reInt \stdFace,
\end{equation}
which follows from \cite[Theorem~6.4]{rockafellar}.
In view of \eqref{eq:minF} and \eqref{eq:face_conj}, if $\stdFace$ is exposed by $y$, then $\stdFace$ is precisely the face that is conjugated to $\minFace(y, \stdCone^*)$. 
We will also need a well-known lemma that connects extreme rays and maximal proper faces via conjugacy.
\begin{lemma}\label{lem:conj}
If $\stdFace \face \stdCone$ is an exposed extreme ray, then $\stdFace^\Delta$ is a maximal proper face of $\stdCone^*$.
\end{lemma}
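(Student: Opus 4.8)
The plan is to derive this from the standard calculus of conjugate faces, with the exposedness hypothesis on $\stdFace$ entering at one key point. Throughout I take $\stdCone$ to be a closed convex cone, so that $\stdCone^{**}=\stdCone$, and I use freely that conjugation is inclusion-reversing, that $\mathcal{G}\subseteq\mathcal{G}^{\Delta\Delta}$ for every face $\mathcal{G}$, and that $\stdFace^{\Delta\Delta}=\stdFace$ whenever $\stdFace$ is exposed: indeed, if $\stdFace=\stdCone\cap\{y\}^\perp$ with $y\in\stdCone^*$, then $y\in\stdFace^\Delta$, hence $\stdFace^{\Delta\Delta}=\stdCone\cap(\stdFace^\Delta)^\perp\subseteq\stdCone\cap\{y\}^\perp=\stdFace$, and the reverse inclusion is automatic. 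Note first that $\stdFace^\Delta$ is indeed a face of $\stdCone^*$: by \eqref{eq:face_conj} it equals $\stdCone^*\cap\{x\}^\perp$ for $x\in\reInt\stdFace$, and $x\in\stdCone\subseteq\stdCone^{**}$. It then suffices to prove (a) that $\stdFace^\Delta$ is proper and (b) that every face $\mathcal{G}\face\stdCone^*$ with $\stdFace^\Delta\subseteq\mathcal{G}$ equals either $\stdFace^\Delta$ or $\stdCone^*$.

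For (a), write $\stdFace=\{\alpha x:\alpha\geq 0\}$ with $x\neq 0$. If $\stdFace^\Delta=\stdCone^*$ then $\stdCone^*\subseteq\{x\}^\perp$, i.e.\ $x\in(\stdCone^*)^\perp=\stdCone\cap(-\stdCone)$; in particular $-x\in\stdCone$, so applying the face property to $x=2x+(-x)$ (with $2x,-x\in\stdCone$) forces $-x\in\stdFace=\RR_{\geq 0}x$, which is impossible for $x\neq 0$.

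For (b), let $\mathcal{G}\face\stdCone^*$ with $\stdFace^\Delta\subseteq\mathcal{G}$. Conjugating this inclusion inside $\stdCone^*$ and using $(\stdFace^\Delta)^\Delta=\stdFace$ gives $\mathcal{G}^\Delta\subseteq\stdFace$; since $\stdFace$ is the half-line $\RR_{\geq 0}x$ and $\mathcal{G}^\Delta$ is a convex cone, either $\mathcal{G}^\Delta=\{0\}$ or $\mathcal{G}^\Delta=\stdFace$. If $\mathcal{G}^\Delta=\stdFace$, then $\stdFace^\Delta=(\mathcal{G}^\Delta)^\Delta=\mathcal{G}^{\Delta\Delta}\supseteq\mathcal{G}$, so $\mathcal{G}=\stdFace^\Delta$. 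If $\mathcal{G}^\Delta=\{0\}$, pick $z\in\reInt\mathcal{G}$; applying \eqref{eq:face_conj} to $\stdCone^*$ (and using $\stdCone^{**}=\stdCone$) gives $\stdCone\cap\{z\}^\perp=\mathcal{G}^\Delta=\{0\}$, and I will argue that this forces $z\in\reInt\stdCone^*$, whence $\mathcal{G}=\minFace(z,\stdCone^*)=\stdCone^*$ by \eqref{eq:minF}. This is exactly the dichotomy (b), and together with (a) it yields the claimed maximality.

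The step I expect to be the main (if mild) obstacle is the implication just invoked: $z\in\stdCone^*$ and $\stdCone\cap\{z\}^\perp=\{0\}$ imply $z\in\reInt\stdCone^*$. I would prove it via the description of the relative interior of a dual cone: for a closed convex cone $\stdCone$ one has $z\in\reInt\stdCone^*$ if and only if $z\in\stdCone^*$ and $\inProd{x}{z}>0$ for all $x\in\stdCone$ outside the lineality space $\stdCone\cap(-\stdCone)$. The hypothesis first makes that lineality space trivial (a nonzero $\ell\in\stdCone\cap(-\stdCone)$ would satisfy $\inProd{\ell}{z}=0$ since $\pm\ell\in\stdCone$ and $z\in\stdCone^*$, hence would lie in $\stdCone\cap\{z\}^\perp$); and for any nonzero $x\in\stdCone$ it gives $x\notin\{z\}^\perp$, so $\inProd{x}{z}\neq 0$ and therefore $\inProd{x}{z}>0$. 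Hence $z\in\reInt\stdCone^*$, which closes the argument.
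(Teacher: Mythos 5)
Your proof is correct and its skeleton matches the paper's: conjugate the inclusion $\stdFace^\Delta\subseteq\mathcal{G}$, use $\stdFace^{\Delta\Delta}=\stdFace$ (valid because $\stdFace$ is exposed), and exploit the half-line structure of $\stdFace$ to reduce to the dichotomy $\mathcal{G}^\Delta=\{0\}$ or $\mathcal{G}^\Delta=\stdFace$. The one genuine divergence is how the case $\mathcal{G}^\Delta=\{0\}$ is closed out: the paper restricts attention to a \emph{maximal} proper face $\hat\stdFace\supseteq\stdFace^\Delta$ and invokes the external fact that maximal proper faces are exposed (so $\hat\stdFace=\hat\stdFace^{\Delta\Delta}=\{0\}^\Delta=\stdCone^*$, a contradiction), whereas you work with an arbitrary face $\mathcal{G}\supseteq\stdFace^\Delta$ and show directly that $\stdCone\cap\{z\}^\perp=\{0\}$ for $z\in\reInt\mathcal{G}$ forces $z\in\reInt\stdCone^*$, via the standard description of $\reInt\stdCone^*$. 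Your route is more self-contained (no appeal to the exposedness of maximal proper faces) at the cost of needing that relative-interior characterization; it also explicitly verifies that $\stdFace^\Delta$ is a proper face of $\stdCone^*$, a point the paper's argument implicitly relies on (a maximal proper face containing $\stdFace^\Delta$ must exist) but does not check. Both arguments assume $\stdCone$ is closed so that $\stdCone^{**}=\stdCone$; this is implicit in the paper's use of the biconjugation facts and explicit in yours.
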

\begin{proof}
In this proof we need the well-known fact that a face is exposed if and only if $\stdFace^{\Delta \Delta} = \stdFace$ holds.
In addition, all maximal proper faces are exposed, e.g., see \cite[Corollary~2.2]{Tam85}.
With these facts in mind, suppose that $\hat \stdFace \face \stdCone^*$ is a maximally proper face containing
$\stdFace^\Delta$.
As conjugacy inverts inclusion, we 
have $\hat \stdFace^{\Delta} \face \stdFace^{\Delta \Delta} = \stdFace $, so $\hat \stdFace ^\Delta = \{0\}$ or $\hat \stdFace ^\Delta = \stdFace$.
In the former case we have $\hat \stdFace = \hat \stdFace^{\Delta \Delta} = \stdCone^*$, which contradicts the properness of $\hat \stdFace$. So, we must be in the latter case and $\hat \stdFace = \hat \stdFace^{\Delta \Delta} = \stdFace^{\Delta}$ holds.
\end{proof}

A face $\stdFace \face \stdCone$ is said to be \emph{projectionally exposed} (or p-exposed) if there exists a linear map $\mathcal{P}: \ambSpace \to \ambSpace$  such that 
$\mathcal{P}(\stdCone) = \stdFace$ and $\mathcal{P}^2 = \mathcal{P}$.
If all faces of $\stdCone$ are p-exposed, then $\stdCone$ is said to be p-exposed. 
P-exposedness was proposed in \cite{BW81} and 
a comprehensive discussion on p-exposedness is given in \cite{ST90}.

If the projection $\mathcal{P}$ can be taken to be self-adjoint with respect to $\inProd{\cdot}{\cdot}$, then 
$\stdFace$ is said to be \emph{orthogonally projectionally exposed} (or o.p.-exposed). Clearly, this is a notion that depends on the choice of inner product, so to emphasize this choice, we will sometimes say that \emph{$\stdFace$ is o.p.-exposed under $\inProd{\cdot}{\cdot}$}.
If all faces of $\stdCone$ are o.p.-exposed with respect the same inner product, then $\stdCone$ is said to be o.p.-exposed.
Being o.p.-exposed is a more restrictive property and, for example, an o.p.-exposed polyhedral cone must 
be linearly isomorphic to a nonnegative orthant, see
\cite[Theorem~3.7 and pg.~233]{ST90} and \cite{BLP87}.
That said, all symmetric cones are o.p.-exposed under an appropriate inner product \cite[Proposition~33]{L21}.



\subsection{Homogeneous cones and T-algebras}\label{sec:hom}
Let $\stdCone \subseteq \ambSpace$ be a convex cone. The automorphisms of $\stdCone$, denoted by $\Aut(\stdCone)$, are the group of linear bijections $Q: \ambSpace \to \ambSpace$ satisfying $Q(\stdCone) = \stdCone$, where the group operation is the function composition.
We say that $\stdCone$ is \emph{homogeneous} if $\Aut(\stdCone)$ acts transitively on the relative interior of $\stdCone$, i.e., 
for every $x,y \in \reInt \stdCone$, there must exist $Q \in \Aut(\stdCone)$ satisfying $Q(x) = y$. 

\emph{Symmetric cones} are homogeneous cones that are self-dual under some inner product and the theory of Euclidean Jordan algebras \cite{K99,FK94} seems to be the standard algebraic framework to deal with them. 
For homogeneous cones,  there are different, albeit closely related, algebraic frameworks described in many foundational works \cite{V63,Ro66,Gin92}. 
Brief overviews of this state of affairs can be seen in \cite[Section~1]{YN15} and \cite[Section~6]{TV23}.

Our goal in this paper is to describe the facial structure of homogeneous cones, which as far as we know, has not been described explicitly before. 
Nevertheless, there are previous works describing polynomials that are closely related to the boundary structure of homogeneous cones \cite{Ishi01,Na14,Na18,GIL24,Na24}. 
A previous work by Ishi also contains important information on the action of the automorphism group on the boundary of the cone, see \cite{Ishi00}.
We will revisit this point later in Section~\ref{sec:faces}.
In optimization, the study of homogeneous cones seems to have been initially motivated by self-concordant barriers and interior-point methods \cite{Gu96,GT98,CH09}, with other works focusing on geometric and representational aspects \cite{TX01,CH03,TT03,CT06}.

%
In this work, we will use the theory of T-algebras of Vinberg \cite{V63} as it seems to be the most natural for our purpose. We will later justify this choice and discuss related approaches  in Section~\ref{sec:other}.
Unfortunately, as of this writing, although the original russian text of \cite{V63} is freely available on the internet, it is not completely trivial to get a copy of the English translation. 
Because of that, as we review T-algebras, we refer extensively to more accessible references in English such as \cite{CH03,CT06,CH09,KTX12}. 

To start, we say that a \emph{matrix algebra of rank $r$} is an algebra $\alg$ over $\Re$ equipped with a bigradation, i.e., a decomposition as a direct sum $\alg = \bigoplus _{i,j=1}^r \alg _{ij}$ where the $\alg _{ij}$ are subspaces  satisfying the following properties:
\begin{equation}\label{eq:m_rules}
\begin{aligned}
\alg _{ij} \alg _{jk} & \subseteq \alg _{ik} \\
\alg _{ij} \alg _{kl} & = \{0\} \qquad \text{if } j \neq k.  
\end{aligned} 
\end{equation}
Given $a \in \alg$, we can write $a = \sum _{i,j=1}^r a_{ij}$ in a unique way with $a_{ij} \in \alg_{ij}$.  
We will refer to $a_{ij}$ as the $(i,j)$ component of $a$.
Multiplication in $\alg$ is analogous to the usual matrix multiplication since \eqref{eq:m_rules} implies that
\begin{equation}\label{eq:m_mult}
(ab)_{ij} =  \sum _{k=1}^r a_{ik} b_{kj}
\end{equation}
holds for $a,b \in \alg$.
A \emph{matrix algebra with involution} is a matrix algebra equipped with a linear bijection $\T:\alg \to \alg$ such that 
\begin{equation}\label{eq:inv}
a^{\T \T} = a,\, (ab)^\T = b^\T a^\T\, \text{and} ~~\alg_{ij}^\T = \alg_{ji}, \quad \text{ for all }  i,j \in \{1,\ldots, r\}.
\end{equation}
With that,  we have $(a^\T)_{ij} = {a_{ji}^{\T}}$,
where ``$a^\T_{ji}$'' should be read as $(a_{ji})^*$. 

Then, a \emph{T-algebra of rank $r$} is a matrix algebra of rank $r$ with involution such that the following additional axioms are satisfied.
\begin{enumerate}[$({a}1)$]
	\item For each $i$,  $\alg _{ii}$ is a subalgebra isomorphic to $\Re$. \label{ax:1}
\end{enumerate}
Let $\rho _i: \alg_{ii} \to \Re$ denote the algebra isomorphism and 
let $\id _i$ denote the unit element in $\alg _{ii}$, \textit{i.e.}, 
the element satisfying $\rho _i(\id _i) = 1$.
Furthermore, define the function $\tr: \alg \to \Re$ by $\tr(a) := \sum _{i=1}^r \rho _i (a_{ii})$.
\begin{enumerate}[$({a}1)$]
	\setcounter{enumi}{1}
	\item For all $a \in \alg$  and all $i,j \in \{1, \ldots, r\}$ we have 
	$\id _i a_{ij} = a_{ij}$ and $a_{ji}\id _i = a_{ji}.$ \label{ax:2}
	\item For all $a,b \in \alg$, $\tr(ab) = \tr(ba)$. \label{ax:3}
	\item For all $a,b,c \in \alg$ and we have $\tr((ab)c) = \tr(a(bc))$. \label{ax:4}
	\item For all $a \in \alg$, we have $\tr(aa^*) \geq 0$ with equality if and only if $a = 0$. \label{ax:5}
	\item For all $a,b,c \in \alg$ and $1 \leq i\leq j \leq k \leq l \leq r$, we have $a_{ij}(b_{jk}c_{kl}) = (a_{ij}b_{jk})c_{kl}$. \label{ax:6}
	\item For all $a,b \in \alg$, $1\leq i\leq j \leq k \leq r$ and $1 \leq l\leq k \leq r$,  we have $a_{ij}(b_{jk}b_{lk}^*) = (a_{ij}b_{jk})b_{lk}^*$. \label{ax:7}
\end{enumerate}
%
We note that these axioms are sometimes stated in different but equivalent forms, see \cite[Remarks~2-7]{CH09} for some equivalences.

Defining $e \coloneqq e_1 + \cdots + e_r$, 
 Axiom~\ref{ax:2} implies that $ea = a$ and $ae = a$ holds for all $a \in \alg$, so $e$ plays the role of identity element.
In addition, for each $a \in \alg$ and $i\in\{1,\ldots,r\}$, $ae_i$ and $e_ia$ correspond to the $i$-th ``column'' and ``row'' of $a$, respectively.

Given a T-algebra $\alg$, the bilinear function
\begin{equation}\label{eq:innprod}
\inProd{a}{b}\coloneqq \tr(ab^*) = \sum_{1\leq i,j\leq r} \rho_i(a_{ij}b_{ij}^*)
\end{equation}
is an inner product on $\alg$ due to Axiom \ref{ax:5}, which induces the norm $\|a\|\coloneqq \sqrt{\inProd{a}{a}}$ over $\alg$.
The subspaces $\alg_{ij}$ are orthogonal to each other under this inner product by \eqref{eq:m_rules} and Axiom \ref{ax:3}.

\begin{remark}[Analogy with the usual matrices]\label{rem:usual}
The space $M^{n\times n}$ of real $n\times n$ matrices can be seen as a T-algebra in a natural way, however, an important 
difference is that for $a \in M^{n\times n}$, $a_{ij}$ is not a scalar, but a matrix as well. For example, for $a \coloneqq \begin{pmatrix}	1 & 2 \\3 & 4\end{pmatrix}$, we  have the following decomposition
\[
a = a_{11} + a_{12} + a_{21} + a_{22} = \begin{pmatrix}	1 & 0 \\0 & 0\end{pmatrix} + \begin{pmatrix}	0 & 2 \\0 & 0\end{pmatrix} +\begin{pmatrix}	0 & 0 \\3 & 0\end{pmatrix} +\begin{pmatrix}	0 & 0 \\0 & 4\end{pmatrix}.  
\]
\end{remark}

Now, we define certain subsets of $\alg$. The subspace of 
``Hermitian'' matrices of $\alg$ is given 
by
\[
\HE \coloneqq \{a \in \alg \mid a^\T = a \}.
\]
We also define sets of upper triangular submatrices, so that
\begin{equation*}
\begin{array}{l}
\displaystyle \UT \coloneqq \bigoplus _{1\leq i\leq j\leq r} \alg _{ij},\\
\UT_+ \coloneqq \{a \in \UT \mid \rho_i(a_{ii}) \geq 0~\text{ if }~ 1 \leq  i\leq r \}, \\
\UT_{++} \coloneqq \{a \in \UT \mid \rho_i(a_{ii}) > 0~\text{ if }~ 1 \leq i  \leq r \}.
\end{array}
\end{equation*}
When it is necessary to emphasize the underlying algebra, we will alternatively write $\HE(\alg)$, $\UT(\alg)$, $\UT_{+}(\alg)$ or $\UT_{++}(\alg)$. 
Finally, the homogeneous cone in $\HE$ associated to the T-algebra $\alg$ is defined as 
\[
\stdHC \coloneqq \{tt^* \mid t \in \UT_{++} \}.
\]
and its closure is given by 
\[
\closure \stdHC = \{tt^* \mid t \in \UT_{+} \}.
\]
Each element in $\stdHC$ has a unique representation  as $tt^\T$ for $t \in \UT_{++}$, see \cite[Proposition~4]{CH09} or \cite[Chapter~III, Proposition~2]{V63}. 
This is analogous to the fact that a positive definite symmetric matrix has a unique Cholesky factorization in terms of upper triangular matrices.
This uniqueness breaks down for elements of $\closure \stdHC$ and, in general, $tt^\T = uu^\T$ for $t,u \in \UT_{+}$ does not imply $u = t$.
Nevertheless, we will discuss how to recover uniqueness 
 in Section~\ref{sec:chol}.

An important result by Vinberg establishes the correspondence between 
homogeneous cones and T-algebras as follows.
\begin{theorem}[{\cite[Chapter~III, \S2 and Theorem~4]{V63}}]\label{theo:vin}
	$\stdHC$ is an open homogeneous convex cone in $\HE(\alg)$. Conversely, let $\stdCone$ be a pointed open homogeneous convex cone contained in a real finite dimensional space. Then, there is a (up to isomorphism) unique 
T-algebra $\alg$ for which $\stdHC$ is linearly isomorphic to $\stdCone$.
\end{theorem}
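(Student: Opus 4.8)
\emph{Approach.} This is Vinberg's foundational correspondence between pointed open homogeneous convex cones and $T$-algebras, so rather than a proof from scratch I would organize the argument into two directions and isolate where the real work lies.

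\emph{Forward direction.} The multiplication rules \eqref{eq:m_rules} make $\UT$ a subalgebra with unit $e = e_1 + \cdots + e_r$, and Axiom~(a\ref{ax:1}) together with back-substitution along the bigrading shows that every $t \in \UT_{++}$ is invertible with $t^{-1} \in \UT_{++}$; hence $\UT_{++}$ is a group under the algebra product. For $t \in \UT_{++}$ the linear map $R_t \colon a \mapsto t a t^{*}$ preserves $\HE$ by \eqref{eq:inv} and satisfies $R_t(\stdHC) = \stdHC$; replacing $t$ by $\sqrt{\lambda}\,t$ shows $\stdHC$ is a cone. By the uniqueness of the Cholesky factorization on $\stdHC$ recorded after the definition of $\stdHC$ (\cite[Prop.~4]{CH09}), the orbit map $f \colon t \mapsto R_t(e) = t t^{*}$ is a bijection of $\UT_{++}$ onto $\stdHC$, so $\{R_t : t \in \UT_{++}\}$ is a subgroup of $\Aut(\stdHC)$ acting simply transitively, which gives homogeneity. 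For openness, note $\dim \UT = \sum_i \dim \alg_{ii} + \sum_{i<j}\dim \alg_{ij} = \dim \HE$ (using $\alg_{ij}^{*} = \alg_{ji}$, $\dim \alg_{ii} = 1$, and that the involution on $\alg_{ii}\cong\Re$ is the identity by Axiom~(a\ref{ax:5})); it then suffices to check that the differential $h \mapsto h t^{*} + t h^{*}$ of $f$ at $t$ is injective on $\UT$. Since $f \circ L_t = R_t \circ f$ with $L_t \colon s \mapsto ts$ a linear automorphism of $\UT$, this reduces to $t = e$, where $h + h^{*} = 0$ with $h \in \UT$ forces $h = 0$ componentwise; the inverse function theorem then makes $\stdHC = f(\UT_{++})$ open.

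\emph{Convexity (the delicate point of the forward direction).} Convexity does not follow formally from homogeneity, and it is here that Axioms (a\ref{ax:3})--(a\ref{ax:7}) are genuinely used. The plan is to exhibit $\stdHC$ as a domain of positivity: following Vinberg, introduce the characteristic function $\varphi(a) = \int_{\stdHC^{*}} e^{-\inProd{a}{\xi}}\,d\xi$ on $\stdHC$ (equivalently, the Vinberg ``norm'' $N = \Delta_1 \cdots \Delta_r$, a product of generalized leading principal minors attached to the bigrading), show that $\stdHC = \{a \in \HE : \Delta_k(a) > 0,\ k = 1,\dots,r\}$, and prove that a suitable root of $N$ is concave there; convexity of $\stdHC$, and hence of $\closure\stdHC$, follows. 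I expect this to be the most technical part of the forward implication.

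\emph{Converse.} Given a pointed open homogeneous convex cone $\stdCone$, equip it with the $\Aut(\stdCone)$-invariant Hessian metric $\nabla^{2}(-\log \varphi_\stdCone)$; its completeness forces the stabilizer of a base point $e$ in $G \coloneqq \Aut(\stdCone)^{\circ}$ to be compact, so an Iwasawa decomposition $G = KAN$ with $K$ containing that stabilizer produces a split solvable subgroup $T = AN$ acting simply transitively on $\stdCone$. Identifying $\mathfrak{t} = \operatorname{Lie}(T)$ with $T_e\stdCone \cong \ambSpace$ via $X \mapsto X\cdot e$ turns $\ambSpace$ into a clan (a compact left-symmetric algebra with positive-definite trace form), and the root-space decomposition of $\mathfrak{t}$ under a maximal torus of $A$ yields the bigrading $\ambSpace \cong \bigoplus_{i\le j}\alg_{ij}$; setting $\alg_{ji} \coloneqq \alg_{ij}^{*}$ through the metric adjoint and verifying \eqref{eq:m_rules}, \eqref{eq:inv} and (a\ref{ax:1})--(a\ref{ax:7}) produces the desired $T$-algebra, with $\stdHC$ linearly isomorphic to $\stdCone$ by construction. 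For uniqueness, any two simply transitive split solvable subgroups of $G$ are conjugate (conjugacy of Iwasawa $AN$-factors), and a conjugacy carrying one base point to the other intertwines the induced clan structures together with their gradings, hence induces a $T$-algebra isomorphism. The genuine obstacle in the whole theorem is the existence of the simply transitive split solvable group $T$ in the converse: this is the crux of Vinberg's theory and rests on the full apparatus of characteristic functions together with an induction on $\dim \stdCone$ (peeling off one ``diagonal'' direction at a time), and I would cite \cite[Ch.~II--III]{V63} for it rather than reprove it.
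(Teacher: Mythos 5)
The paper does not prove this theorem; it is imported verbatim from Vinberg \cite[Chapter~III, \S2 and Theorem~4]{V63}, so there is no in-paper argument to compare yours against. Your outline is a faithful reconstruction of Vinberg's strategy --- forward direction via the simply transitive action of $\UT_{++}$ and the characteristic function/compound-determinant argument for convexity, converse via a simply transitive split solvable subgroup of $\Aut(\stdCone)$ and the induced clan/left-symmetric structure --- and you correctly identify the two genuinely hard points (concavity of a root of $N=\Delta_1\cdots\Delta_r$, and existence of the simply transitive split solvable group), deferring them to \cite{V63}. As a citation-backed sketch this is appropriate.

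One caution on the forward direction: a T-algebra is not associative, so the map $R_t\colon a\mapsto t a t^{*}$ is not well defined as written --- the parenthesization matters. This is precisely why the paper works with the quadratic maps $Q_t(b)=\tfrac{1}{2}\bigl(t(bt^{*})+t(tb)-(tt)b\bigr)_H$ of \eqref{eq:aut2} rather than with a naive conjugation; the identities $Q_u(tt^{*})=(ut)(ut)^{*}$ and $Q_uQ_t=Q_{ut}$ (equations \eqref{eq:qut2} and \eqref{eq:qut}, resting on Axioms~\ref{ax:6}--\ref{ax:7}) are exactly what replaces the associativity you implicitly invoke in ``$f\circ L_t=R_t\circ f$''. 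Your dimension count $\dim\UT=\dim\HE$ and the injectivity of $h\mapsto h+h^{*}$ on $\UT$ at $t=e$ are correct, and your parenthetical that Axiom~\ref{ax:5} forces the involution to be the identity on each $\alg_{ii}$ is a nice touch. With the substitution of $Q_t$ for $R_t$, the sketch stands.
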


Let $\stdCone \subseteq \ambSpace$ be a pointed homogeneous convex cone. By restricting $\ambSpace$ to $\spanVec\stdCone$, we may assume that $\stdCone$ has non-empty interior. Then, we define the \emph{homogeneous cone rank} of $\stdCone$ as the rank of the corresponding T-algebra given in Theorem~\ref{theo:vin} for the interior of $\stdCone$. 
This is well-defined because isomorphic T-algebras must have the same rank, see \cite[Definition~5]{V63}.
For a non-pointed  homogeneous closed convex cone $\stdCone$, we may similarly define rank by looking at the rank 
of its pointed component $\stdCone \cap L^\perp$, where $L \coloneqq \stdCone \cap - \stdCone$.

It follows from Vinberg's discussion in \cite{V63} that the rank of a homogeneous cone is invariant under linear isomorphisms. 
We note this as a proposition. 
\begin{proposition}\label{prop:rank}
The homogeneous cone rank is invariant under linear isomorphisms.
\end{proposition}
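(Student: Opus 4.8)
The plan is to reduce the general statement to the pointed full-dimensional case and then invoke the uniqueness part of Vinberg's Theorem~\ref{theo:vin}. Concretely, suppose $A : \ambSpace_1 \to \ambSpace_2$ is a linear isomorphism with $A(\stdCone_1) = \stdCone_2$, where both cones are homogeneous closed convex cones. Write $L_k \coloneqq \stdCone_k \cap -\stdCone_k$ for the lineality space of $\stdCone_k$. First I would observe that $A(L_1) = L_2$, since $A$ is a bijection carrying $\stdCone_1$ onto $\stdCone_2$ and hence $-\stdCone_1$ onto $-\stdCone_2$; consequently $A$ also maps $\spanVec\stdCone_1$ onto $\spanVec\stdCone_2$. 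Restricting attention to these spans (which is exactly the reduction already sanctioned in the paragraph preceding the proposition), we may assume each $\stdCone_k$ is full-dimensional in $\ambSpace_k$.

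Next I would handle the lineality. By definition the rank of $\stdCone_k$ is the rank of the pointed component $\stdCone_k \cap L_k^\perp$. The subtlety is that $A$ need not send $L_1^\perp$ to $L_2^\perp$ — orthogonality is not preserved by a general linear isomorphism. However, $\ambSpace_k = L_k \oplus (\ambSpace_k / L_k)$ and the pointed component is linearly isomorphic to the quotient cone $\stdCone_k / L_k$ in $\ambSpace_k / L_k$ via the restriction of the canonical projection (this is a standard fact: $\stdCone_k \cap L_k^\perp \to \ambSpace_k/L_k$ is a bijection onto $\stdCone_k/L_k$ and its inverse is linear). Since $A(L_1) = L_2$, the map $A$ descends to a linear isomorphism $\bar A : \ambSpace_1/L_1 \to \ambSpace_2/L_2$ with $\bar A(\stdCone_1/L_1) = \stdCone_2/L_2$. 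Composing the three isomorphisms, the pointed components $\stdCone_1 \cap L_1^\perp$ and $\stdCone_2 \cap L_2^\perp$ are linearly isomorphic. This reduces everything to the pointed full-dimensional case.

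So it remains to prove: if $\stdCone_1, \stdCone_2$ are pointed, open (equivalently, we pass to interiors, which are preserved by $A$ since $A$ is a homeomorphism), homogeneous convex cones and $A(\stdCone_1) = \stdCone_2$, then they have the same rank. Here I would apply Theorem~\ref{theo:vin}: there is an up-to-isomorphism unique T-algebra $\alg_k$ with $\stdHC(\alg_k)$ linearly isomorphic to $\stdCone_k$. Then $\stdHC(\alg_1)$ is linearly isomorphic to $\stdCone_1 \cong \stdCone_2 \cong \stdHC(\alg_2)$. Applying the uniqueness clause of Theorem~\ref{theo:vin} to the open homogeneous cone $\stdCone_1$ (for which $\alg_1$ is the unique associated T-algebra), the T-algebra $\alg_2$ — which also has the property that $\stdHC(\alg_2)$ is linearly isomorphic to $\stdCone_1$ — must be isomorphic to $\alg_1$. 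Since isomorphic T-algebras have the same rank (\cite[Definition~5]{V63}, as already cited), $\alg_1$ and $\alg_2$ have equal rank, hence $\stdCone_1$ and $\stdCone_2$ have equal homogeneous cone rank, as claimed.

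The main obstacle, and the only genuinely non-bookkeeping point, is the lineality-space step: making precise that passing to the pointed component commutes with linear isomorphism even though $A$ need not respect the orthogonal complements $L_k^\perp$. The clean way around this is to phrase the pointed component intrinsically as the quotient $\stdCone_k/L_k$, which is manifestly functorial under linear isomorphisms carrying $L_1$ to $L_2$; the orthogonal-complement description is then just one concrete realization of that quotient, so its rank is well-defined and isomorphism-invariant. Everything else is a direct appeal to the uniqueness in Vinberg's correspondence and the definition of T-algebra rank, both already available in the excerpt.
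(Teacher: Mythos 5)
Your proof is correct and is essentially the argument the paper relies on: the paper offers no explicit proof, simply asserting that the claim ``follows from Vinberg's discussion,'' and your appeal to the uniqueness clause of Theorem~\ref{theo:vin} together with the fact that isomorphic T-algebras have equal rank is precisely that argument made explicit. Your extra care with the non-pointed case --- realizing the pointed component intrinsically as the quotient $\stdCone/L$ so that it is functorial under isomorphisms carrying $L_1$ to $L_2$ --- is a genuine detail the paper glosses over, and it is handled correctly.
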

%
We also note in passing 
that G\"uler and Tun\c{c}el showed that for a homogeneous pointed closed convex cone $\stdCone$ with non-empty interior, the smallest barrier parameter $\vartheta(\stdCone)$ among all logarithmically homogenous self-concordant barriers of $\stdCone$ satisfies $\vartheta(\stdCone) = r$, see \cite[Theorem~4.1]{GT98} and the discussion in \cite[Section~3.1]{CH09}.
Now, $\vartheta(\stdCone)$ is also invariant under linear isomorphisms which can be either verified directly or inferred from, say,  \cite[Theorem~5.3.3]{Ne18}.

\paragraph{Dual cone}
Let $\algD$ be the matrix algebra
\begin{equation}\label{eq:algD}
\algD\coloneqq \bigoplus_{i,j=1}^r \algD_{ij},\quad  \text{where}~~ \algD_{ij}\coloneqq\alg_{r+1-i,r+1-j},
\end{equation}
endowed with the same involution as $\alg$, which forms a T-algebra of rank $r$. Then the following identity holds \cite[p.\,390]{V63}.
\begin{equation}\label{eq:dual_cone}
\stdHC^* = \closure \stdCone(\algD) = \{t^*t \mid t \in \UT_+(\alg)\}.
\end{equation}

\paragraph{Automorphisms}
We now describe how $\UT_{++}$ can be seen as a group acting over $\stdHC$, see \cite[Section~2.2]{CH09} or \cite[Chapter~III, \S 2]{V63} for more details. 
Let $u \in \UT_{++}$ and for $x \in \stdHC$ denote by $t_x$ the unique element in $\UT_{++}$ such that $x = t_x t_x^{\T}$ holds.
We define the map $\tilde Q_u : \stdHC \to \stdHC$ satisfying
\begin{equation}\label{eq:aut}
\tilde Q_u(x) \coloneqq (u t_x)(ut_x)^\T, \qquad \forall x \in \stdHC.
\end{equation}
 Then, $\tilde Q_u$ is a bijection over $\stdHC$ and it turns out  that
\begin{equation}\label{eq:tildequ}
\tilde Q_u(x) = u((t_x t_x^*)u^*) + u(u(t_xt_x^*)) - (uu)(t_xt_x^*)  =u(xu^*) + u(ux) - (uu)(x) 
\end{equation}
holds, see \cite[Proposition~2]{CH09}.
In particular, the map $\tilde Q_u$ is linear  over $\stdHC$.

Next, for $a \in \alg$, define $a_H \coloneqq a+a^*$.
Taking \eqref{eq:tildequ} as a starting point, we define 
the \emph{quadratic map $Q_a : \H(\alg) \to \H(\alg)$} for $a \in \alg$ as the linear map satisfying
\begin{equation}\label{eq:aut2}
Q_a(b) \coloneqq \frac{1}{2}(a(ba^*) + a(ab) - (aa)b)_H,
\end{equation}
e.g., see \cite[Definition~8]{CH09}.
In view of \eqref{eq:tildequ},  $\tilde Q_u(x) = Q_{u}(x)$ holds for $x \in \stdHC$ and $Q_{u}$ can be seen as a linear extension of $\tilde Q_u$.

A useful property is that for $a \in \alg$, the adjoint map $Q_a^*$ satisfies
\begin{equation}\label{eq:qadj}
Q_{a}^\T = Q_{a^\T},
\end{equation}
e.g., see \cite[Section~2.3]{CH09}.


The linear maps $Q_{u}$ for $u \in \UT_{++}$ form a subgroup of the automorphism group of $\stdHC$ acting transitively on $\stdHC$. 
In fact, for any $u \in \UT_{++}$, there exists a unique $u^{-1} \in \UT_{++}$ such that 
that $uu^{-1} = u^{-1}u = e$ holds, see \cite[Proposition~1]{CH09}.
More generally, we have 
\begin{align}
Q_{u} Q_t & = Q_{ut}, \qquad \forall u,t \in \UT_{+}, \label{eq:qut}\\
Q_{u}^{-1} & = Q_{u^{-1}}, \qquad \forall u \in \UT_{++}, \label{eq:qinv}\\
Q_{u}(tt^*) & = (ut)(ut)^*, \qquad \forall u,t \in \UT_{+} \label{eq:qut2},
\end{align}
where \eqref{eq:qut} follows from \cite[Corollary~1]{CH09} by taking limits,
\eqref{eq:qinv} follows from \cite[Proposition~1 and Corollary~1]{CH09} and 
\eqref{eq:qut2} can be obtained by taking limits in \eqref{eq:aut} or by invoking \cite[Proposition~2]{CH09} and \eqref{eq:aut2}.
%

In what follows, we will say that an automorphism $Q$ of $\stdHC$ is \emph{triangular} if there 
exists $u \in \UT_{++}$ such that $Q = Q_u$ holds.




\paragraph{Principal subalgebras}
Let $I \subseteq \{1,\ldots, r\}$ be a subset of indices. We will identify certain subalgebras of $\alg$ that correspond to ``principal submatrices''. 
We define $\alg_I$ to be the following subalgebra
\[
\alg_I \coloneqq \{a \in \alg \mid a_{ik} = a_{ki} = 0, \forall i \in I, \forall k \in \{1,\ldots,r\} \}.
\]
For example, $a \in \alg _{\{r\}}$ if and only if the $r$-th ``column'' and the $r$-th ``row'' of $a$ vanishes. 
We will say that $\alg_{I}$ is a \emph{principal subalgebra} of $\alg$.

$\alg_{I}$ can be seen as a T-algebra of rank $s \coloneqq r - |I|$ in a natural way using the structure inherited from $\alg$, where $|I|$ is the number of elements of $I$. 
Let $J \coloneqq  \{1,\ldots, r\} \setminus I$ be the remaining indices ordered as \[
n_1 < n_2 < \cdots < n_{s}.
\]
This implies that $i < j \Leftrightarrow n_i < n_j$, for all $i,j \in \{1,\ldots, s\}$.
In essence, we are ``renaming'' the remaining indices in $J$ to indices in the set $\{1,\ldots, s\}$, in such a way that their ordering in $J$ is preserved. 
This will be important to ensure compatibility with the T-algebra structure of $\alg$.

The bigradation of $A_{I}$ inherited from $\alg$ is given by 
$\alg_{I} = \bigoplus _{i,j=1}^s \bar{\alg}_{ij}$, where 
 $\bar{\alg}_{ij} \coloneqq \alg _{n_{i}n_{j}}$.
For $i,j,k,l \in \{1,\ldots, s\}$, we have
\[
\bar \alg _{ij} \bar \alg _{lk} = \alg _{n_{i}n_{j}} \alg _{n_{k}n_{l}}.
\]
Since $n_j = n_k$ holds if and only if $j = k$, we have
\begin{equation*}\begin{aligned}
\bar\alg _{ij} \bar\alg _{jk} & \subseteq \bar\alg _{ik} \\
\bar\alg _{ij} \bar\alg _{kl} & = \{0\} \qquad \text{if } j \neq k.  
\end{aligned} 
\end{equation*}
Defining $\bar\rho _i \coloneqq \rho _{n_i}$, $\bar{e}_{i} \coloneqq e_{n_{i}}$ and restricting the trace and involution functions to $\alg_{I}$, it is straightforward (albeit tedious) to check that 
Axioms~\ref{ax:1} through \ref{ax:7} still hold. For example,  the restriction of the involution $\T$ is still an involutive  bijection from $\alg_I$ to $\alg_I$, because if $a_{ik} = a_{ki} = 0$, then $a_{ik}^\T = a_{ki}^*  = 0$ must also hold. Furthermore, we have
\[
\bar{\alg}_{ij}^\T = {\alg}_{n_{i}n_{j}}^\T = {\alg}_{n_{j}n_{i}} = \bar{\alg}_{ji}
\] 
For the rest of the axioms the idea is that since they are true over $\alg$ for all the possible indices $\{1,\ldots, r\}$ they must be true for any subset $J \subset \{1,\ldots, r\}$ of those indices. This takes 
care of the validity of axioms~\ref{ax:1} to \ref{ax:5} over $\alg_{I}$.
Next, if  $1 \leq i\leq j \leq k \leq l \leq s$, then we must 
have $1 \leq n_i\leq n_j \leq n_k \leq n_l \leq s$, so Axiom~\ref{ax:6} is also valid for $\alg_{I}$. A similar argument holds for Axiom~\ref{ax:7}.

Finally, we take a look at the automorphisms of $\stdCone(\alg_I)$. As before, 
the triangular matrices $\UT_{++}(\alg _I)$ induce a group that acts transitively on $\stdCone(\alg_I)$ and in the next result we will see that they can be seen as restrictions of automorphisms of $\stdHC$. 

\begin{proposition}\label{prop:sub_aut}
Given a triangular automorphism $Q_u$ of $\stdCone(\alg_I)$, there exists 
a triangular automorphism $Q_{\tilde u}$ of $\stdCone(\alg)$ such that 
the restriction of $Q_{\tilde u}$ to $\H(\alg_I)$ coincides with 
$Q_u$.
\end{proposition}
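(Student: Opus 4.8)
The plan is to lift $u \in \UT_{++}(\alg_I)$ to an element $\tilde u \in \UT_{++}(\alg)$ by "padding with identity" on the coordinates indexed by $I$, and then to check that the quadratic map $Q_{\tilde u}$ restricts to $Q_u$ on $\H(\alg_I)$. Concretely, using the index renaming $J = \{n_1 < \cdots < n_s\}$ from the definition of $\alg_I$, I would define $\tilde u \in \alg$ by setting its $(n_i, n_j)$ component equal to the $(i,j)$ component $u_{ij}$ of $u$ for all $i \leq j$ in $\{1,\ldots,s\}$, setting the $(k,k)$ component equal to $e_k$ for each $k \in I$, and setting all remaining components to $0$. Then $\tilde u \in \UT(\alg)$ since $u \in \UT(\alg_I)$, and $\rho_k((\tilde u)_{kk}) > 0$ for every $k \in \{1,\ldots,r\}$: for $k = n_i \in J$ this is $\bar\rho_i(u_{ii}) > 0$ from $u \in \UT_{++}(\alg_I)$, and for $k \in I$ it is $\rho_k(e_k) = 1 > 0$. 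Hence $\tilde u \in \UT_{++}(\alg)$ and $Q_{\tilde u}$ is a triangular automorphism of $\stdHC$ by the discussion following \eqref{eq:aut2}.

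Next I would verify the restriction property. The cleanest route is via \eqref{eq:qut2}: for $t \in \UT_+(\alg_I)$ one has $Q_u(tt^*) = (ut)(ut)^*$ in $\alg_I$, while the analogous identity holds for $\tilde u$ and $\tilde t$ (the padding of $t$) in $\alg$. So it suffices to check that (a) the padding map $\alg_I \hookrightarrow \alg$ is an algebra homomorphism compatible with the involution — which is immediate from the bigradation rules \eqref{eq:m_rules}, \eqref{eq:m_mult} and the fact that $e_k$ for $k \in I$ acts as a local identity — so that $\widetilde{ut} = \tilde u \tilde t$ and $\widetilde{tt^*} = \tilde t\, \tilde t^*$, and (b) every element of $\reInt \stdCone(\alg_I)$ has the form $tt^*$ with $t \in \UT_{++}(\alg_I)$, so the two linear maps $Q_{\tilde u}|_{\H(\alg_I)}$ and (the padding of) $Q_u$ agree on a full-dimensional subset of $\H(\alg_I)$ and hence everywhere by linearity. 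A point worth stating explicitly is that $\H(\alg_I)$ is invariant under $Q_{\tilde u}$: this follows because $\tilde u$ has zero components in any block touching an index in $I$ except the diagonal blocks $e_k$, so formula \eqref{eq:aut2} applied to $b \in \H(\alg_I)$ produces an element whose $I$-rows and $I$-columns again vanish.

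Alternatively — and perhaps more transparently — one can argue directly from \eqref{eq:aut2}: plug a generic $b \in \H(\alg_I)$ into the formula for $Q_{\tilde u}(b)$, expand using \eqref{eq:m_mult}, and observe that because $\tilde u_{kl} = 0$ whenever $k \in I$ or $l \in I$ with $k \neq l$ and $\tilde u_{kk} = e_k$ for $k \in I$, every term reduces to a sum over indices in $J$ only, reproducing exactly the defining formula \eqref{eq:aut2} for $Q_u(b)$ computed inside $\alg_I$ with the renamed indices and the inherited operations $\bar\rho_i, \bar e_i, \T|_{\alg_I}$. The main obstacle I anticipate is purely bookkeeping: one must be careful that the index renaming $i \mapsto n_i$ is order-preserving (which the excerpt already emphasizes) so that "upper triangular" is preserved by the padding, and one must track that the associativity/involution axioms used implicitly when manipulating products of components of $\tilde u$ and $b$ are invoked only for index patterns that actually occur — but since all relevant axioms hold over $\alg$ for \emph{all} indices in $\{1,\ldots,r\}$, no genuine difficulty arises. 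I would keep the written proof short by choosing the \eqref{eq:qut2} route and relegating the homomorphism check to a one-line remark about \eqref{eq:m_mult}.
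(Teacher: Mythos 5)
Your proposal is correct and follows essentially the same route as the paper: lift $u$ to $\tilde u = u + \sum_{k\in I}e_k \in \UT_{++}(\alg)$, use \eqref{eq:qut2} together with the fact that the $e_k$-part of $\tilde u$ annihilates any $t\in\UT_{++}(\alg_I)$ (so $\tilde u t = ut$), and conclude by linearity on $\spanVec\stdCone(\alg_I)=\H(\alg_I)$. The only superfluous step is ``padding'' $t$ and $tt^\T$: since $\alg_I$ is literally a subalgebra of $\alg$, these elements need no modification (and the padding-with-identity map is multiplicative but not linear, so calling it an algebra homomorphism is imprecise); the paper simply works with $t$ and $tt^\T$ as elements of $\alg$ directly.
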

\begin{proof}
 We ``complete''  $u$ by adding $1$'s to the diagonal by letting $\tilde u  = u + \sum_{i \in I} e_i$. In this way, 
 $\tilde u \in \UT_{++}(\alg)$.
Next, let $t \in \UT_{++}(\alg_I)$. 
By \eqref{eq:qut2} we have $Q_{\tilde u}(tt^*) = (\tilde u t)(\tilde u t)^*$.
As a consequence of the multiplication rules in \eqref{eq:m_mult} and Axiom~\ref{ax:2} we have
\[
\tilde u t = \left(u + \sum _{i \in I} e_i\right)t = ut + \sum _{i \in I}\sum _{j = 1}^r t_{ij}.
\]
However, since $t \in\UT_{++}(\alg_I)$, we have $t_{ij} = 0$ whenever $i \in I$, so the second summation is zero and we have $\tilde u t = u t$.
Therefore
\[
Q_{\tilde u}(tt^*) = (ut)(ut)^* = Q_{u}(tt^*)
\]
holds which implies that $Q_{\tilde u}$ and $Q_{u}$ coincide over $\stdCone(\alg_I) = \{tt^* \mid t \in \UT_{++}(\alg_I)\}$ and must coincide over $\H(\alg_I) = \spanVec \stdCone(\alg_I)$.
%
%
\end{proof}


\section{Facial structure of homogeneous cones}\label{sec:faces}
In this section, we prove our main results on the facial structure of general homogeneous cones. 
Before that, we must discuss the generalized Cholesky decomposition in homogeneous cones. 

\subsection{Unique Cholesky factorization in homogeneous cones}\label{sec:chol}
Every element in the interior of a homogeneous cone has a unique representation as $tt^*$, with $t \in \UT_{++}$. 
Then, a limiting argument tells us that elements in the boundary can be represented as $tt^*$ with $t \in \UT_{+}$, however this factorization is not unique in general. 
We will address this issue in this subsection.

We say that $t \in \UT_+$ is \emph{proper} if $t_{ii} = 0$ implies that 
the $i$-th ``column'' of $t$ vanishes, i.e., $t_{ki} = 0$ for all $k < i$.
It turns out that each $x \in \closure \stdHC$ has a unique decomposition in terms of \emph{proper} triangular matrices. 
In order to prove that, first we need the following  lemma.

\begin{lemma}\label{lemma:row_vanish}
	Let $\alg$ be a T-algebra of rank $r$. 
	The proper face $(\closure \stdHC )\cap \{e_r\}^\perp$ of $\closure\stdHC$ is equal to $\closure (\stdCone(\alg_{\{r\}}))$.
\end{lemma}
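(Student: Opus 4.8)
The plan is to prove the two set inclusions separately, using the explicit $tt^*$ parametrizations of $\closure\stdHC$ and $\closure\stdCone(\alg_{\{r\}})$ together with the trace formula \eqref{eq:innprod} for the inner product.

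First I would unwind the inner product condition. For $x = tt^*$ with $t \in \UT_+$, axiom \ref{ax:3}--\ref{ax:4} and the orthogonality of the $\alg_{ij}$ give $\inProd{x}{e_r} = \tr(tt^* e_r) = \rho_r((tt^*)_{rr}) = \rho_r\!\left(\sum_{k=1}^r t_{rk}t_{rk}^*\right) = \sum_{k=1}^r \rho_r(t_{rk}t_{rk}^*)$. By axiom \ref{ax:5} each summand is nonnegative (it equals $\norm{t_{rk}}^2$ up to the identification in \eqref{eq:innprod}), so $\inProd{tt^*}{e_r}=0$ forces $t_{rk}=0$ for all $k = 1,\ldots,r$; but $t$ is upper triangular, so only $t_{rk}$ with $k \geq r$, i.e.\ $t_{rr}$, could be nonzero anyway, and we conclude $t_{rr}=0$ and the whole $r$-th row of $t$ vanishes. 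Conversely if the $r$-th row of $t$ vanishes then $\inProd{tt^*}{e_r}=0$. This shows $(\closure\stdHC)\cap\{e_r\}^\perp = \{tt^* \mid t \in \UT_+,\ e_r t = 0\}$.

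Next I would match this against $\closure\stdCone(\alg_{\{r\}})$. By \eqref{eq:dual_cone}-style reasoning applied to the principal subalgebra (or directly from the definition $\closure\stdHC(\algD) = \{tt^* \mid t\in\UT_+\}$ transported to $\alg_{\{r\}}$), we have $\closure\stdCone(\alg_{\{r\}}) = \{tt^* \mid t \in \UT_+(\alg_{\{r\}})\}$, where $\alg_{\{r\}}$ consists of elements whose $r$-th row and column vanish, with $\UT(\alg_{\{r\}}) = \bigoplus_{1\le i\le j\le r-1}\alg_{ij}$. For the inclusion $\supseteq$: if $t \in \UT_+(\alg_{\{r\}})$ then $t \in \UT_+(\alg)$ with vanishing $r$-th row (and column), so $tt^* \in (\closure\stdHC)\cap\{e_r\}^\perp$ by the previous paragraph. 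For $\subseteq$: given $x = tt^*$ with $t \in \UT_+(\alg)$ and $e_r t = 0$, the $r$-th row of $t$ is zero; since $t$ is upper triangular its $r$-th column has only the $(r,r)$ entry, which is also zero, so in fact $t \in \UT_+(\alg_{\{r\}})$, hence $x \in \closure\stdCone(\alg_{\{r\}})$. That closes both inclusions.

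The one point requiring care — and the likely main obstacle — is the bookkeeping needed to justify that $\closure\stdCone(\alg_{\{r\}})$ really is $\{tt^* \mid t \in \UT_+(\alg_{\{r\}})\}$ as a subset of $\H(\alg)$ and not merely of $\H(\alg_{\{r\}})$ under some relabeling: one must check that the T-algebra structure on $\alg_{\{r\}}$ described before Proposition~\ref{prop:sub_aut} (with $J = \{1,\ldots,r-1\}$, so no reindexing is actually needed here) makes $\UT_{++}(\alg_{\{r\}})$ literally a subset of $\UT_{++}(\alg)\cap\alg_{\{r\}}$ and that the cone it generates inside $\H(\alg_{\{r\}}) \subseteq \H(\alg)$ agrees with what one gets by the $tt^*$ construction in $\alg$. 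Since $\{r\}$ is the largest index, the order-preserving renaming is the identity, so this is purely a matter of noting that the multiplication, involution, and trace on $\alg_{\{r\}}$ are the restrictions of those on $\alg$, which was already verified in the paragraph preceding Proposition~\ref{prop:sub_aut}. Everything else is a direct computation with the trace form.
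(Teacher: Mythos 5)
Your computation of $\inProd{tt^*}{e_r}=\rho_r(t_{rr}t_{rr}^*)$ and the inclusion $\closure(\stdCone(\alg_{\{r\}}))\subseteq(\closure\stdHC)\cap\{e_r\}^\perp$ are fine, but the forward inclusion contains a genuine gap caused by a row/column mix-up. In the paper's conventions, $e_r t$ is the $r$-th \emph{row} of $t$, and for upper triangular $t$ this row reduces to the single entry $t_{rr}$; the $r$-th \emph{column} $te_r=\sum_{i\le r}t_{ir}$ can have all of $t_{1r},\dots,t_{r-1,r}$ nonzero. Your step ``since $t$ is upper triangular its $r$-th column has only the $(r,r)$ entry, which is also zero, so in fact $t\in\UT_+(\alg_{\{r\}})$'' has the two words swapped and is false as stated: membership in $\UT_+(\alg_{\{r\}})$ requires the whole $r$-th column to vanish, and $t_{rr}=0$ does not force $t_{ir}=0$ for $i<r$. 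Already for $2\times 2$ PSD matrices, $t=\begin{psmallmatrix}a&b\\0&0\end{psmallmatrix}$ satisfies $t_{22}=0$ and $tt^*=(a^2+b^2)e_1\in\{e_2\}^\perp$, yet $t\notin\UT_+(\alg_{\{2\}})$. The element $x$ does lie in $\closure(\stdCone(\alg_{\{r\}}))$, but the given factor does not witness this, so you must produce a different factor with vanishing last column; simply deleting the column does not work, since $(t-te_r)(t-te_r)^*=tt^*-(te_r)(te_r)^*\ne x$ whenever $te_r\ne 0$. You also cannot appeal to the existence of a \emph{proper} factor from Proposition~\ref{prop:chol}, because that proposition's proof relies on this lemma.

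This is precisely the difficulty the paper's proof is built to overcome. It first uses \cite[Proposition~2.4]{KTX12} to deduce that $x_{rr}=0$ forces the entire last row and column of $x$ to vanish, then factors the perturbation $x+\epsilon e\in\stdHC$ uniquely as $t^\epsilon(t^\epsilon)^*$ with $t^\epsilon\in\UT_{++}$; the identity $(x+\epsilon e)_{ir}=\rho_r(t^\epsilon_{rr})t^\epsilon_{ir}$ together with $\rho_r(t^\epsilon_{rr})=\sqrt{\epsilon}>0$ forces $t^\epsilon_{ir}=0$ for $i\ne r$, and a boundedness and limit argument as $\epsilon\downarrow 0$ yields a factor $\hat t\in\UT_+(\alg_{\{r\}})$ of $x$ whose last column genuinely vanishes. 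Your proof needs this perturbation-and-limit step (or an equivalent argument) to close the forward inclusion; the rest of your bookkeeping about the subalgebra $\alg_{\{r\}}$ is correct.
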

\begin{proof}
Let $x \in 	(\closure \stdHC )\cap \{e_r\}^\perp$.
Then we have $x_{rr} = 0$, which implies $x_{ir} = x_{ri} = 0$ for all $i \in \{1,\ldots r\}$, by  \cite[Proposition~2.4]{KTX12}.

Since $e \in \stdHC$, for every 
$\epsilon > 0$, $x+ \epsilon e  \in \stdHC$ and, therefore, $x+ \epsilon e $ has a unique Cholesky decomposition in upper triangular matrix with positive diagonal. That is,  $x+\epsilon e = (t^{\epsilon})(t^{\epsilon})^\T$ 
for $t^{\epsilon} \in \UT_{++}$.
For the last column of $x$ we have
\begin{equation}\label{eq:last_column_lem}
(x+\epsilon e)_{ir} = \sum _{k=1}^{r}  t^{\epsilon}_{ik}((t^{\epsilon})^\T)_{kr} =  t^\epsilon_{ir} (t^\epsilon)^\T_{rr} = \rho_{r}(t_{rr}^{\epsilon})t_{ir}^{\epsilon}, \quad \forall i \in \{1,\ldots, r\}.
\end{equation}
Because $(x+\epsilon e)_{ir} = x_{ir}=0$ for $i \neq r$ and $\rho_{r}(t_{rr}^{\epsilon}) > 0$, \eqref{eq:last_column_lem} leads to
\begin{equation*}
t_{ir}^{\epsilon} = 0, \qquad \forall i \neq r.
\end{equation*}
We also obtain from \eqref{eq:last_column_lem} and $(x+\epsilon e)_{rr} = \epsilon e_{r} $ that $\rho_{r}(t_{rr}^\epsilon) = \sqrt{\epsilon}$.

Gathering all the facts and observing that $\norm{t^{\epsilon}}^2=\tr((t^\epsilon) (t^\epsilon)^*)=\tr(x+\epsilon e)$ is bounded for bounded $\epsilon$, 
there exists $\epsilon_k \downarrow 0$ for which the sequence $\{t^{\epsilon_k}\}$ converges to a point $\hat t$ of $\UT_{+}$ that satisfies 
$x = \hat t \hat t ^\T$ and such that the last column of $\hat t$ vanishes. 
In particular, $\hat t$ belongs to the set of upper triangular matrices of the principal subalgebra $\alg_{\{r\}}$ and thus $x \in \closure (\stdCone(\alg_{\{r\}}))$ holds.
For the converse, if $ x \in \closure (\stdCone(\alg_{\{r\}}))$, then 
$0 = \rho_r(x_{rr}) = \inProd{x}{e_r}$, so that $x \in (\closure \stdHC )\cap \{e_r\}^\perp$.
\end{proof}


The following proposition was essentially proven by Gindikin \cite[Chapter~2, \S1, Lemma~7]{Gin92}. 
Chua also provides a similar result in \cite[Lemma~17]{Chua06} by first showing the analogous result for positive semidefinite matrices and making use of matrix realizations of homogeneous cones.
Here, we provide a different proof from which we will be able to extract a factorization algorithm.
\begin{proposition}[Unique Cholesky factorization]\label{prop:chol}
	Let $x \in \closure \stdHC$, then $x = tt^*$ for a unique proper $t \in \UT_+$.
\end{proposition}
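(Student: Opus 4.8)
The plan is to prove existence and uniqueness separately, using induction on the rank $r$ and leaning on Lemma~\ref{lemma:row_vanish} to peel off the last row/column. For the base case $r=1$, the algebra $\alg$ is just $\Re$, $\closure\stdHC = \Re_{\geq 0}$, and every nonneg number has a unique square-root factorization $x = t\cdot t$ with $t\geq 0$; the properness condition is vacuous except that $t=0$ when $x=0$. Now assume the result for all T-algebras of rank $< r$.

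\textbf{Existence.} Let $x \in \closure\stdHC$. First suppose $x_{rr} \neq 0$, so $\rho_r(x_{rr}) > 0$. Then I would perform one step of Cholesky elimination explicitly: set $t_{rr} \coloneqq \rho_r(x_{rr})^{-1/2} e_r$ and $t_{ir} \coloneqq \rho_r(x_{rr})^{-1/2} x_{ir}$ for $i < r$ (these products make sense since $x_{ir}\in\alg_{ir}$ and $e_r\in\alg_{rr}$), and let $\hat x \coloneqq x - tt^*$ restricted to the principal subalgebra $\alg_{\{r\}}$; one checks via \eqref{eq:m_mult} that $\hat x$ has vanishing $r$-th row and column, and that $\hat x$ lies in $\closure\stdCone(\alg_{\{r\}})$ (this last point needs a short argument, e.g. via $Q_{u^{-1}}$ for a suitable triangular $u$ that clears the last column, combined with Lemma~\ref{lemma:row_vanish}). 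By induction $\hat x = s s^*$ for a unique proper $s \in \UT_+(\alg_{\{r\}})$, and $t \coloneqq s + t_{rr} + \sum_{i<r} t_{ir}$ is a proper element of $\UT_+(\alg)$ with $x = tt^*$. If instead $x_{rr} = 0$, then by Lemma~\ref{lemma:row_vanish} we have $x \in \closure\stdCone(\alg_{\{r\}})$, so induction gives a unique proper factorization there, which is automatically a proper factorization in $\UT_+(\alg)$ with vanishing $r$-th column — matching the properness requirement exactly.

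\textbf{Uniqueness.} Suppose $x = tt^* = uu^*$ with $t,u \in \UT_+$ both proper. Comparing $(r,r)$ components gives $\rho_r(t_{rr})^2 = \rho_r(x_{rr}) = \rho_r(u_{rr})^2$, hence $t_{rr} = u_{rr}$ (both nonnegative). If $\rho_r(x_{rr}) > 0$, the $(i,r)$ components ($i<r$) give $\rho_r(t_{rr}) t_{ir} = x_{ir} = \rho_r(u_{rr}) u_{ir}$, so $t_{ir} = u_{ir}$ for all $i$; subtracting off the contribution of the last column then shows the principal-subalgebra parts of $t$ and $u$ give the same element of $\closure\stdCone(\alg_{\{r\}})$, and induction finishes. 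If $\rho_r(x_{rr}) = 0$, then $t_{rr} = u_{rr} = 0$, and properness of $t$ and $u$ forces $t_{ir} = u_{ir} = 0$ for all $i<r$; so both $t$ and $u$ lie in $\UT_+(\alg_{\{r\}})$, are proper there, and factor the same $x \in \closure\stdCone(\alg_{\{r\}})$, so induction again gives $t=u$.

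The main obstacle I expect is the bookkeeping step that, after eliminating the last row/column, the residual $\hat x$ genuinely lands in $\closure\stdCone(\alg_{\{r\}})$ (not merely in $\H(\alg_{\{r\}})$ with vanishing last row/column) — this requires using a triangular automorphism to reduce to the factored form and invoking the cone structure, rather than a purely formal manipulation, and it is also the step that uses the axioms \ref{ax:6}--\ref{ax:7} on associativity of triangular products. Everything else is either the $r=1$ base case or routine component-wise matching via \eqref{eq:m_mult}. Since the existence argument is constructive — each inductive step is an explicit rescale-and-subtract — it yields exactly the factorization algorithm alluded to before the proposition.
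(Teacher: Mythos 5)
Your proposal follows essentially the same route as the paper: induction on the rank, splitting on whether $\rho_r(x_{rr})$ vanishes, using Lemma~\ref{lemma:row_vanish} in the degenerate case, and peeling off the last column in the nondegenerate case; the uniqueness argument also matches. The one step you flag as the main obstacle --- showing that the residual $\hat x = x - uu^*$ lands in $\closure(\stdCone(\alg_{\{r\}}))$ --- is handled in the paper by a purely formal manipulation rather than the automorphism argument you anticipate: since $x \in \closure\stdHC$, it admits \emph{some} (not necessarily proper) factorization $x = tt^*$ with $t \in \UT_+$ by definition of the closed cone; comparing $(i,r)$ components forces $t e_r = u$, whence $tu^* = (te_r)(te_r)^* = uu^*$ and $(t-u)(t-u)^* = tt^* - uu^* = \hat x$ with $t - u \in \UT_+(\alg_{\{r\}})$. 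This exhibits $\hat x$ directly as an element of the smaller closed cone, avoiding any appeal to $Q_{u^{-1}}$ or to the cone's automorphism group; your proposed detour through a triangular automorphism is plausible but costs more machinery than the step requires.
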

\begin{proof}
	We proceed by induction on the rank $r$ of the  T-algebra.
	The proposition is true if $r = 1$. So let us assume that it holds for all T-algebras of rank $(r-1) \geq 1$ and let us show that it holds for T-algebras of rank $r$, for $r \geq 2$.

	Let $\alg$ be a T-algebra of rank $r$ and let $x \in \closure \stdHC$.
	Let $t \in \UT_{+}$ be such that $x = tt^\T$. 
	For the last column of $x$ we have
	\begin{equation}\label{eq:last_column}
	x_{ir} = \sum _{k=1}^{r}  t_{ik}(t^\T)_{kr} =  t_{ir} t^\T_{rr} = \rho_{r}(t_{rr})t_{ir}, \quad \forall i \in \{1,\ldots, r\}.
	\end{equation}
	In particular, $x_{rr} = \rho_{r}(t_{rr})t_{rr} = t_{rr}^2$ holds. 
	We then consider two cases.
	
	If $\rho _{r}(x_{rr}) = 0$, i.e., $\inProd{x}{e_{r}} = 0$, then, by Lemma~\ref{lemma:row_vanish}, 
	$x$ belongs to the closure of the homogeneous cone associated to the principal subalgebra $\alg_{\{r\}}$, i.e., $x \in  \closure (\stdCone(\alg_{\{r\}}))$. 
	By the induction hypothesis, $x = \hat t \hat t^\T$ for a unique proper upper triangular matrix $\hat t$ of  $\alg_{\{r\}}$.
	Since $\hat t$ is also  proper with respect to $\alg$ (after all, the last column of $\hat t$ vanishes), $\hat t$ is the desired proper element satisfying $x = \hat t \hat t^\T$.
	We note that $\hat t$ is unique over $\UT_+$ because  \eqref{eq:last_column} implies that all $t$ satisfying $x = tt^*$ must have  $t_{rr} = 0$, which for any proper $t$ leads to $t_{ir} = 0$ for all $i$. This proves the assertion in the case $\rho _{r}(x_{rr}) = 0$.
	
	Next, suppose that $\rho_{r}(x_{rr})>0$. Define
	$$u \coloneqq \sum_{i=1}^r x_{ir}/\sqrt{\rho_r(x_{rr})},\qquad \hat{x} \coloneqq x - uu^*.$$
	We claim that $\hat{x} \in  \closure (\stdCone(\alg_{\{r\}}))$.
	In fact, we can write $x = tt^*$ for some $t\in \UT_+$ because $x \in  \closure (\stdCone(\alg))$. For any such $t$,
	\eqref{eq:last_column} implies that
	\begin{equation}\label{eq:last_column_eq}
	t_{ir} = x_{ir}/\sqrt{\rho_{r}(x_{rr})} = u_{ir},\quad i=1,\ldots,r.
	\end{equation}
	This means that the last column of $t$ is uniquely determined by $u$, i.e., $te_r=u$ holds.
	Therefore, we have $tu^* = (te_1+\cdots+te_r)(te_r)^* = (te_r)(te_r)^* = uu^*$ and 
	\begin{equation}\label{eq:hatx_decomp}
	(t-u)(t-u)^* =  tt^*-uu^* = x - uu^* = \hat{x}.
	\end{equation}
	Since $t-u \in \UT_+(\alg_{\{r\}})$ holds by \eqref{eq:last_column_eq}, the identity \eqref{eq:hatx_decomp} implies the claim $\hat{x} \in \closure (\stdCone(\alg_{\{r\}}))$.
	By the induction hypothesis, there is a unique proper $t_{\hat{x}} \in \UT_+(\alg_{\{r\}})$ satisfying $\hat{x}=t_{\hat{x}}t_{\hat{x}}^*$. Defining $t_{x}\coloneqq t_{\hat{x}}+u \in \UT_+$, we see that $t_{x}$ is proper and satisfies
	$$
	t_xt_x^* = (t_{\hat{x}}+u)(t_{\hat{x}}+u)^*=t_{\hat{x}}t_{\hat{x}}^* + uu^* = \hat{x}+uu^*=x.
	$$
	It remains to show the uniqueness of $t_x$. Suppose $x=tt^*$ for some proper $t\in \UT_+$. As discussed in \eqref{eq:last_column_eq} and \eqref{eq:hatx_decomp}, the last column of $t$ is given by $u$ and $\hat{x}=(t-u)(t-u)^*=t_{\hat{x}}t_{\hat{x}}^*$ holds. In addition, since $t$ is proper, $t-u$ is a proper element in $\UT_+(\alg_{\{r\}})$, which implies $t-u=t_{\hat{x}}$ by the uniqueness of $t_{\hat{x}}$. Hence, we obtain $t = t_{\hat{x}}+u = t_x$, i.e., the uniqueness of $t_x$ follows.	
\end{proof}
The proof of Proposition~\ref{prop:chol} shows that if $x \in \closure \stdHC$ and $x = tt^\T$ holds for a proper $t \in \UT_{+}$ then, the last 
column of $t$ is either zero (if $x_{rr} = 0$) or is uniquely determined by \eqref{eq:last_column_eq}. Denoting such a column by $u$ and the remaining $r-1$ columns of $t$ by $\hat t$, \eqref{eq:hatx_decomp} implies that $\hat t$ satisfies
\[
\hat t \hat t^\T = x - uu^\T.
\]
Now, $\hat t$ is an element of $t \in \UT_{+}(\alg_{\{r\}})$, so again, the last column of $\hat t$ in $\alg_{\{r\}}$ (i.e., the $(r-1)$-th column of $\hat t$ seen as an element of $\alg$), is uniquely determined and is either zero (if $(x - uu^\T)_{r-1,r-1} = 0$) or is given by \eqref{eq:last_column_eq} with $x-uu^\T$ in place of $x$.
This leads to an algorithm for computing a generalized Cholesky decomposition, see Algorithm~\ref{alg:cholesky}.
Note that Algorithm~\ref{alg:cholesky} can also be used to test membership in 
$\stdHC$: given some $x \in \HE(\alg)$, we invoke Algorithm~\ref{alg:cholesky} with $x$ as input. Denoting by $t$ the output of Algorithm~\ref{alg:cholesky}, 
we have $x \in \stdHC$ if and only if $x = tt^*$. 

%

\begin{algorithm}[h]
	\caption{Generalized Cholesky decomposition in a homogeneous cone }
	\label{alg:cholesky}
	\begin{algorithmic}[1]
		\REQUIRE{$x \in \closure\stdHC$}
		\STATE $u \leftarrow 0$, $t \leftarrow 0$, $y \leftarrow x $
		
		\FOR{$i = r,\ldots,1$}
		\STATE $y \leftarrow y - uu^\T$
		\IF {$\rho_{i}(y_{ii}) > 0$}
		\STATE $t_{ji} \leftarrow y_{ji}/\sqrt{\rho_{i}(y_{ii})}, \quad \forall j \in \{1,\ldots, i\}.$
		\ELSE
		\STATE $t_{ji} \leftarrow 0, \quad \forall j \in \{1,\ldots, i\}$
		\ENDIF
		\STATE $u \leftarrow$ $i$-th column of $t$
		\ENDFOR
		\RETURN{$t$}
	\end{algorithmic}
\end{algorithm}

\begin{proposition}\label{prop:row_vanish}
	Let $\alg$ be a T-algebra of rank $r$. Each 
	$e_i$ generates extreme rays of both $\closure \stdHC$ and $\stdHC^*$.
	The proper face $(\closure \stdHC )\cap \{e_i\}^\perp$ of $\closure\stdHC$ is equal to $\closure (\stdCone(\alg_{\{i\}}))$.
\end{proposition}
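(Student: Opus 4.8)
The plan is to reduce the general statement for $e_i$ to the already-established case $i=r$ (Lemma~\ref{lemma:row_vanish}) by exploiting the symmetry of the T-algebra axioms under permuting the index set, and then to deduce the extreme ray claims from the resulting identification of the face.

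First I would observe that the axioms \ref{ax:1}--\ref{ax:7} are not literally symmetric under arbitrary permutations of $\{1,\ldots,r\}$ (axioms \ref{ax:6} and \ref{ax:7} single out the natural order), so I cannot just relabel $i$ as $r$ wholesale. Instead I would use the principal subalgebra machinery from Section~\ref{sec:hom}: take $I \coloneqq \{i+1,\ldots,r\}$, so that $\alg_I$ is a T-algebra of rank $i$ whose ``last'' index corresponds to the original index $i$ (the renaming $n_1<\cdots<n_i$ sends index $i$ of $\alg_I$ to index $i$ of $\alg$, and $\bar e_i = e_i$). Since $\alg_I$ is a genuine T-algebra of rank $i$, Lemma~\ref{lemma:row_vanish} applies to it and gives $(\closure\stdCone(\alg_I))\cap\{e_i\}^\perp = \closure(\stdCone((\alg_I)_{\{i\}}))$ inside $\H(\alg_I)$; and one checks directly from the definitions that $(\alg_I)_{\{i\}} = \alg_{\{i\}}$ (removing index $i$ from $\alg_I$, which already had indices $i+1,\ldots,r$ removed, is the same as removing the set $\{i\}\cup I = \{i,i+1,\ldots,r\}$ from $\alg$; wait — that is $\alg_{\{i,i+1,\ldots,r\}}$, not $\alg_{\{i\}}$). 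So this naive reduction does not quite work either, and I would instead argue as follows.

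The cleaner route is to redo the proof of Lemma~\ref{lemma:row_vanish} with $i$ in place of $r$ throughout, the point being that \emph{that} proof only used: (a) the fact \cite[Proposition~2.4]{KTX12} that $x\in\closure\stdHC$ with $x_{ii}=0$ forces $x_{ki}=x_{ik}=0$ for all $k$ — which holds for every index, not just $r$; (b) the Cholesky factorization $x+\epsilon e = t^\epsilon (t^\epsilon)^*$ with $t^\epsilon\in\UT_{++}$; and (c) the column identity \eqref{eq:last_column}. For a general index $i$, the analogue of \eqref{eq:last_column_lem} reads $(x+\epsilon e)_{ji} = \sum_{k\le i} t^\epsilon_{jk}(t^\epsilon)^*_{ki} = t^\epsilon_{ji}\rho_i(t^\epsilon_{ii})$ for $j\le i$ (here we use that $t^\epsilon$ is upper triangular, so the $i$-th column of $t^\epsilon$ has entries only in rows $\le i$, and correspondingly $(x+\epsilon e)_{ji}=0$ for $j>i$ is automatic from $x\in\H$ together with $x_{ji}$ being forced to zero... actually $x_{ji}$ for $j>i$ need not be zero a priori). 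Hmm, so I would need to be a little careful: for $j>i$ we have $(x+\epsilon e)_{ji} = x_{ji}$, and $x_{ji}=0$ follows from (a) since $x_{ii}=0$. So in fact $(x+\epsilon e)_{ji}=0$ for all $j\ne i$, and the argument that $t^\epsilon_{ji}=0$ for $j<i$ and $\rho_i(t^\epsilon_{ii})=\sqrt\epsilon$ goes through verbatim; the entries $t^\epsilon_{ji}$ for $j>i$ are already zero by upper-triangularity. Taking a convergent subsequence $t^{\epsilon_k}\to\hat t\in\UT_+$ with $x=\hat t\hat t^*$ and vanishing $i$-th column then puts $\hat t\in\UT_+(\alg_{\{i\}})$, hence $x\in\closure\stdCone(\alg_{\{i\}})$; the converse is the same one-line computation $\rho_i(x_{ii})=\inProd{x}{e_i}=0$. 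I would write this up as a short ``the proof of Lemma~\ref{lemma:row_vanish} applies mutatis mutandis'' paragraph rather than repeating everything.

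Finally, the extreme ray claims. For $\closure\stdHC$: the face $(\closure\stdHC)\cap\{e_i\}^\perp$ being proper and nonzero shows $e_i$ is exposed (by $e_i$ itself, which lies in $\stdHC^*=\closure\stdCone(\algD)$ since $\inProd{tt^*}{e_i}=\rho_i((t^*t)_{ii})\ge 0$... more simply $e_i\in\closure\stdHC\subseteq(\closure\stdHC)^{**}$, and one checks $e_i\in\stdHC^*$ directly). To see the ray $\{\alpha e_i:\alpha\ge0\}$ is a face, note that if $x+y=\alpha e_i$ with $x,y\in\closure\stdHC$, then $\tr(x),\tr(y)\ge0$ sum to $\alpha$, and applying (a)/Proposition~2.4 of \cite{KTX12} to $x$ and $y$: since $(x)_{jj}=0$ for all $j\ne i$ (because $x_{jj}\le (x+y)_{jj}=0$ using $y_{jj}\ge0$), all off-$(i,i)$ components of $x$ vanish, so $x=\rho_i(x_{ii})e_i$, and likewise $y$; hence both lie on the ray. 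For $\stdHC^*$: by \eqref{eq:dual_cone}, $\stdHC^* = \closure\stdCone(\algD)$ with $\algD_{ij}=\alg_{r+1-i,r+1-j}$, so $e_i\in\alg_{ii}$ corresponds to the index $r+1-i$ of $\algD$, and the same argument applied to the T-algebra $\algD$ shows $e_i$ generates an extreme ray of $\closure\stdCone(\algD)=\stdHC^*$.

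I expect the main obstacle to be the first point: justifying carefully that the proof of Lemma~\ref{lemma:row_vanish} transfers from the index $r$ to an arbitrary index $i$, since the T-algebra axioms are genuinely order-sensitive and one must check that the only facts used — the KTX vanishing lemma, the existence of Cholesky factorizations, and upper-triangularity of the factor — are all insensitive to which index is singled out. Once that transfer is cleanly stated, the extreme-ray arguments are routine consequences of the ``a zero diagonal entry kills the whole row and column'' phenomenon together with additivity of the trace.
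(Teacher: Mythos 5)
There is a genuine gap in the central step, namely your ``mutatis mutandis'' transfer of the proof of Lemma~\ref{lemma:row_vanish} from the index $r$ to a general index $i$. The identity you write, $(x+\epsilon e)_{ji} = \sum_{k\le i} t^{\epsilon}_{jk}((t^{\epsilon})^*)_{ki} = t^{\epsilon}_{ji}\,\rho_i(t^{\epsilon}_{ii})$, is false for $i<r$: since $((t^{\epsilon})^*)_{ki} = (t^{\epsilon}_{ik})^*$ and $t^{\epsilon}$ is upper triangular, the surviving terms are those with $k\ge i$ (and $k \ge j$), not $k\le i$, so the correct expansion is $(x+\epsilon e)_{ji} = \sum_{k\ge \max(i,j)} t^{\epsilon}_{jk}(t^{\epsilon}_{ik})^*$. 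This collapses to the single term $t^{\epsilon}_{ir}(t^{\epsilon}_{rr})^*$ only when $i=r$, which is exactly why Lemma~\ref{lemma:row_vanish} singles out the last index. (Already for ordinary $2\times 2$ matrices, $(tt^*)_{11}=t_{11}^2+t_{12}^2\neq t_{11}^2$.) Moreover, even after correcting the identity, the limiting argument only yields that the $i$-th \emph{row} of $\hat t$ vanishes: from $(x+\epsilon e)_{ii}=\epsilon e_i$ one gets $\sum_{k\ge i}\norm{t^{\epsilon}_{ik}}^2=\epsilon\to 0$, but the entries $t^{\epsilon}_{ji}$ for $j<i$ (the $i$-th \emph{column}) are quotients with denominator $\rho_i(t^{\epsilon}_{ii})\approx\sqrt{\epsilon}$ and are not forced to vanish by this computation. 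So the conclusion $\hat t\in\UT_{+}(\alg_{\{i\}})$ does not follow from your argument.

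The missing ingredient is Proposition~\ref{prop:chol}, which is available at this point in the paper and is how the paper actually argues: write $x=tt^*$ with $t$ \emph{proper}; then $x_{ii}=0$ gives $0=\sum_{k\ge i}t_{ik}t_{ik}^*$, hence the $i$-th row of $t$ vanishes and in particular $t_{ii}=0$; properness then forces the $i$-th column of $t$ to vanish as well, so $t\in\UT_{+}(\alg_{\{i\}})$. Properness is precisely the device that converts ``row vanishes'' into ``column vanishes'', and your route bypasses it. Your initial observation that the naive reduction via principal subalgebras does not work is correct, and your arguments for the easy inclusion, for $e_i$ generating an extreme ray of $\closure\stdHC$ (diagonal entries of a sum, then \cite[Proposition~2.4]{KTX12}), and for passing to $\stdHC^*$ via the dual T-algebra $\algD$ all match the paper and are fine.
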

\begin{proof}
The inclusion $\closure (\stdCone(\alg_{\{i\}})) \subseteq (\closure \stdHC )\cap \{e_i\}^\perp$ is immediate, so we focus on the converse. 
Let $x \in (\closure \stdHC )\cap \{e_i\}^\perp$ and let $x = tt^*$ for some 
proper $t \in \UT_{+}$ as in Proposition~\ref{prop:chol}. Then, $0=\inProd{x}{e_i} = \rho_i(x_{ii}) $ implies that 
$x_{ii} = 0$. In view of $x = tt^\T$, this leads to
\[
0 = \sum _{k=i}^r t_{ik}(t_{ik})^\T.
\]
Therefore, the $i$-th row of $t$ vanishes and since $t$ is proper, we have that the $i$-th column of $t$ also vanishes. 
We conclude that $t \in \UT_{+}(\alg_{\{i\}})$ and, therefore, 
$x \in \closure (\stdCone(\alg_{\{i\}}))$.


	Next, we verify that $e_i$ generates an extreme ray of $\closure \stdHC$. Suppose that $x + y = e_{i}$ holds for $x,y \in \closure \stdHC$. Then, $\rho_{k}(x_{kk}) + \rho_{k}(y_{kk}) = 0$ for $k \neq i$. Since $\rho_{k}(x_{kk})$ and $\rho_{k}(y_{kk})$ are nonnegative, we have 
	$\rho_{k}(x_{kk}) = \rho_{k}(y_{kk}) = 0 = \inProd{x}{e_k} = \inProd{y}{e_{k}}$. From \cite[Proposition~2.4]{KTX12}, this implies that $0 = x_{lk} = x_{kl} = y_{lk} = y_{kl}$, for all $k \neq i$ and all $l \in \{1,\ldots,r\}$.
	In particular, only $x_{ii}$ and $y_{ii}$ can be nonzero and therefore, they must be in the half-line generated by $e_i$.
	
	We recall that the dual cone is the cone corresponding to the T-algebra $\algD$ defined in \eqref{eq:algD} and each $e_i$ is the identity element of $\algD_{r+1-i,r+1-i}$. Applying what we proved so far to $\closure \stdCone(\algD) = \stdHC^*$ shows that 
	the $e_i$ are also extreme rays of $\stdHC^*$.
\end{proof}


\subsection{Faces and their orbits under the action of triangular matrices}
In this subsection, we present our main results on the facial structure of homogeneous cones. 
We start by observing that the cones induced by principal subalgebras are faces that are orthogonally projectionally exposed.
\begin{lemma}\label{lem:orth_proj}
Let $\alg$ be a T-algebra of rank $r$ and let $I \subseteq \{1,\ldots, r\}$ be nonempty. Then, $\closure (\stdCone(\alg_I)) $ is an orthogonally projectionally exposed face of $\closure \stdHC$.	In particular, 
$Q_{e_I}(\closure \stdHC) = \closure (\stdCone(\alg_I))$ holds, where 
$e_I \coloneqq \sum _{i \not \in I} e_i$.
\end{lemma}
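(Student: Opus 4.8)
The plan is to handle the two claims — that $\closure(\stdCone(\alg_I))$ is a face, and that it is orthogonally projectionally exposed via $Q_{e_I}$ — by first reducing to the rank-one deletions already handled in Proposition~\ref{prop:row_vanish}, and then iterating. Concretely, write $I = \{i_1,\dots,i_m\}$. Proposition~\ref{prop:row_vanish} tells us that for each single index $i$, $(\closure\stdHC)\cap\{e_i\}^\perp = \closure(\stdCone(\alg_{\{i\}}))$ is an exposed face of $\closure\stdHC$ (exposed by $e_i \in \stdHC^*$, since each $e_i$ lies in $\stdHC^*$ — it generates an extreme ray of $\stdHC^*$). My first step would be to prove by induction on $|I|$ that $\closure(\stdCone(\alg_I)) = (\closure\stdHC)\cap\{e_I\}^\perp$ where $e_I = \sum_{i\in I} e_i$ — note the index set here is $I$, matching $\rho_i(x_{ii})=0$ for $i\in I$ — wait, I must be careful: the statement defines $e_I = \sum_{i\notin I} e_i$, which is the identity of the subalgebra $\alg_I$, not the exposing vector. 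So let me instead denote the exposing vector $f_I \coloneqq \sum_{i\in I} e_i \in \stdHC^*$. The induction step: if $x\in\closure\stdHC$ with $\inProd{x}{f_I}=0$, then $\rho_i(x_{ii})=0$ for every $i\in I$, so by \cite[Proposition~2.4]{KTX12} the entire $i$-th row and column of $x$ vanish for each $i\in I$, i.e. $x\in\H(\alg_I)$; conversely, the unique Cholesky factor $t$ of such $x$ (Proposition~\ref{prop:chol}) must, by the vanishing-row argument used in Proposition~\ref{prop:row_vanish} applied successively, have vanishing $i$-th row and (by properness) $i$-th column for each $i\in I$, hence $t\in\UT_+(\alg_I)$ and $x\in\closure(\stdCone(\alg_I))$. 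This shows $\closure(\stdCone(\alg_I))$ is the exposed face $(\closure\stdHC)\cap\{f_I\}^\perp$, in particular a face.

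Next I would identify the projection. The natural candidate is $\mathcal{P}\coloneqq Q_{e_I}$ where $e_I = \sum_{i\notin I}e_i$ is the identity of $\alg_I$; note $e_I\in\UT_+(\alg)$ (its diagonal entries are $1$ on $\{1,\dots,r\}\setminus I$ and $0$ on $I$). I would check: (a) $Q_{e_I}$ is idempotent: using \eqref{eq:qut}, $Q_{e_I}Q_{e_I} = Q_{e_I e_I}$, and $e_I e_I = e_I$ by the multiplication rules \eqref{eq:m_mult} together with Axiom~\ref{ax:2} (each $e_i$ is idempotent and $e_i e_j = 0$ for $i\neq j$ by \eqref{eq:m_rules}), so $Q_{e_I}^2 = Q_{e_I}$; (b) $Q_{e_I}(\closure\stdHC) = \closure(\stdCone(\alg_I))$: by \eqref{eq:qut2}, for $t\in\UT_+(\alg)$ we get $Q_{e_I}(tt^*) = (e_I t)(e_I t)^*$, and $e_I t$ is obtained from $t$ by zeroing the rows indexed by $I$ (via \eqref{eq:m_mult}, Axiom~\ref{ax:2}); one then argues that as $t$ ranges over $\UT_+(\alg)$, the product $(e_I t)(e_I t)^*$ ranges exactly over $\{ss^* \mid s\in\UT_+(\alg_I)\} = \closure(\stdCone(\alg_I))$ — the inclusion $\subseteq$ needs a small check that $e_I t$ has vanishing $I$-columns after the product is formed, or more simply that $(e_I t)(e_I t)^* \in \H(\alg_I)$ and lies in $\closure\stdHC$, hence in $(\closure\stdHC)\cap\{f_I\}^\perp = \closure(\stdCone(\alg_I))$ by the first part; the inclusion $\supseteq$ is clear since $\closure(\stdCone(\alg_I))\subseteq\closure\stdHC$ is fixed pointwise by $Q_{e_I}$ (for $s\in\UT_+(\alg_I)$, $e_I s = s$). (c) Self-adjointness: by \eqref{eq:qadj}, $Q_{e_I}^* = Q_{e_I^*}$, and $e_I^* = \sum_{i\notin I} e_i^* = \sum_{i\notin I} e_i = e_I$ since each $e_i\in\alg_{ii}$ is fixed by the involution (as $\alg_{ii}^*=\alg_{ii}$ and $\rho_i$ is an algebra isomorphism to $\Re$, forcing $e_i^* = e_i$). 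Hence $Q_{e_I}$ is a self-adjoint idempotent linear map with $Q_{e_I}(\closure\stdHC) = \closure(\stdCone(\alg_I))$, i.e. $\closure(\stdCone(\alg_I))$ is o.p.-exposed.

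The main obstacle I anticipate is the careful bookkeeping in step (b) — namely showing that $Q_{e_I}$ maps $\closure\stdHC$ \emph{onto} (not merely into) $\closure(\stdCone(\alg_I))$, and doing so cleanly. The cleanest route, which I would adopt, is: surjectivity onto $\closure(\stdCone(\alg_I))$ follows immediately from the fixed-point property $Q_{e_I}|_{\closure(\stdCone(\alg_I))} = \mathrm{id}$ (since $\closure(\stdCone(\alg_I))\subseteq\closure\stdHC$), and $Q_{e_I}(\closure\stdHC)\subseteq\closure(\stdCone(\alg_I))$ follows because $Q_{e_I}(\closure\stdHC)\subseteq\closure\stdHC$ (automorphisms of $\stdHC$ extended as $Q_u$, $u\in\UT_+$, preserve $\closure\stdHC$ by \eqref{eq:qut2}) and every element $Q_{e_I}(x)$ annihilates $f_I$ — indeed $\inProd{Q_{e_I}(x)}{f_I} = \inProd{x}{Q_{e_I}^*(f_I)} = \inProd{x}{Q_{e_I}(f_I)}$, and $Q_{e_I}(f_I) = Q_{e_I}(\sum_{i\in I}e_ie_i^*)$; here I'd observe $e_i = e_ie_i^*$ for $i\in I$ is annihilated by $Q_{e_I}$ because $e_I e_i = 0$ (as $i\in I$), giving $Q_{e_I}(f_I) = 0$ via \eqref{eq:qut2}-type expansion, so $\inProd{Q_{e_I}(x)}{f_I} = 0$. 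Combined with the first part, $Q_{e_I}(x)\in(\closure\stdHC)\cap\{f_I\}^\perp = \closure(\stdCone(\alg_I))$. This closes the argument without needing to manipulate the $(e_It)(e_It)^*$ products entrywise, so the bookkeeping reduces to the three small structural facts about $e_I$ and $f_I$ (idempotency, $*$-invariance, and the orthogonality $e_I e_i = 0$ for $i\in I$), all of which follow directly from \eqref{eq:m_rules}, \eqref{eq:m_mult}, and Axiom~\ref{ax:2}.
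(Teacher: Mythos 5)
Your proof is correct and follows essentially the same route as the paper's: identify $\closure (\stdCone(\alg_I))$ as the face exposed by $\sum_{i\in I}e_i$ via Proposition~\ref{prop:row_vanish}, and verify that $Q_{e_I}$ is an idempotent, self-adjoint map with the right range using \eqref{eq:qut}, \eqref{eq:qadj} and \eqref{eq:qut2}. The only (harmless) difference is in checking $Q_{e_I}(\closure \stdHC)\subseteq \closure (\stdCone(\alg_I))$: the paper computes the diagonal entries of $(e_I t)(e_I t)^*$ directly, whereas you move $f_I$ across the inner product by self-adjointness and observe $Q_{e_I}(f_I)=0$; both work.
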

\begin{proof}
From Proposition~\ref{prop:row_vanish} we have  
that each $\closure \stdCone(\alg_{\{i\}})$ for $i \in \{1, \ldots, r\}$ is a face and we have
\[
\closure (\stdCone(\alg_I)) = \bigcap _{i \in I} \closure (\stdCone(\alg_{\{i\}})) = 
\bigcap _{i \in I}( \closure \stdHC \cap \{e_i\}^\perp).
\]
Since an intersection of faces is a face, this shows that 
$\closure (\stdCone(\alg_I)) $ is a face of $\closure \stdHC$.

Now, let $e_I$ be the identity element of $\alg_I$, i.e., 
$e_I = \sum _{i \not \in I} e_i$. We will consider the quadratic map 
$Q_{e_I}$ and show that $Q_{e_I}$ is the desired projection.

First, we observe that $Q_{e_I}$ is idempotent,
since $Q_{e_I}Q_{e_I} = Q_{e_I e_I} = Q_{e_I}$ holds, where 
the first equality follows from \eqref{eq:qut} and the second from \eqref{eq:m_rules} and Axiom~\ref{ax:2}. The map $Q_{e_I}$ is also self-adjoint by \eqref{eq:qadj} since $e_I^\T = e_I$. This shows that $
Q_{e_I}$ is an orthogonal projection.

Next, let $x \in \stdHC$ and write $x = tt^\T$ for some $t \in \UT_{++}$, then, in view of \eqref{eq:qut2} we have
\[
Q_{e_I}(x) = (e_It)(e_It)^*.
\]
Denoting the ``rows'' of $t$ by $t^i$, so that $t = t^1 + \cdots+ t^r$ holds and letting $t_I \coloneqq \sum _{i \not \in I}t^i$ we have $e_It = t_I$ and
\[
Q_{e_I}(x) = (t_I)(t_I)^*.
\]
Computing the diagonal elements of $Q_{e_I}(x)$ for $i \in I$ we obtain 
\[
(Q_{e_I}(x))_{ii} = \sum _{j=i}^r ({t_{I})}_{ij} ({(t_{I})}_{ij})^* = 0,
\]
since $(t_{I})_{ij} = 0$ for $i \in I$. Then, Proposition~\ref{prop:row_vanish} and $Q_{e_I}(x) \in \closure \stdHC$ implies that 
$Q_{e_I}(x) \in \closure(\stdCone(\alg_{I}))$. 
Therefore, the projection $Q_{e_I}$ maps elements of 
$\closure \stdHC$ to elements of $\closure(\stdCone(\alg_{I}))$.
Finally, since $e_I$ is the identity element in $\alg_I$, we have
$Q_{e_I}(x) = x$ for all $x \in \H(\alg_I)$ by \eqref{eq:aut2}.
\end{proof}
Motivated by Lemma~\ref{lem:orth_proj}, in what follows we will call a face of the format $\closure (\stdCone(\alg_I)) $ a \emph{principal face}, since they are induced by principal subalgebras of $\alg$.

Our next task is showing that every element of $\closure \stdHC$ can be transformed to the identity element of some principal subalgebra via a triangular automorphism.

\begin{lemma}\label{lem:qei}
Let $\alg$ be a T-algebra of rank $r$ and $x \in \closure \stdHC$.
Let $x = tt^\T$ for a proper $t \in \UT_+$ as in Proposition~\ref{prop:chol}.
There exists a triangular automorphism $Q_u$ of $\stdHC$ such that
\[Q_u(x) = e_I,\] where $I \coloneqq \{i \mid \rho_i(t_{ii}) = 0\}$, $e_I \coloneqq \sum _{i \not \in I} e_i$.
\end{lemma}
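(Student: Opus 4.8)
The plan is to perturb $x$ to get into the interior, construct the triangular automorphism for the perturbed element, and then take a careful limit. First I would set up the following concrete target: given the proper Cholesky factor $t \in \UT_+$ of $x$, with $I = \{i \mid \rho_i(t_{ii}) = 0\}$, I want to find $u \in \UT_{++}$ so that $Q_u(x) = e_I$. The natural candidate is to "complete" $t$ to an invertible triangular matrix and invert it. Precisely, define $\bar t \coloneqq t + \sum_{i \in I} e_i$; since $t$ is proper, the $i$-th column of $t$ vanishes whenever $i \in I$, so $\bar t \in \UT_{++}$, and moreover $\bar t \bar t^\T = tt^\T + \sum_{i \in I} e_i = x + e_I'$, where $e_I' \coloneqq \sum_{i \in I} e_i$. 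Now set $u \coloneqq \bar t^{-1} \in \UT_{++}$ (which exists by \cite[Proposition~1]{CH09}). I would then compute, using $x = tt^\T$ and the relation $te_r = $ (the column structure), that $Q_u(x) = Q_{\bar t^{-1}}(tt^\T)$; the key calculation is to show $\bar t^{-1} t$ is the "projection onto the $J$-columns", i.e. $\bar t^{-1} t = \sum_{i \notin I} e_i = e_I$, so that by \eqref{eq:qut2}, $Q_u(x) = (\bar t^{-1} t)(\bar t^{-1} t)^\T = e_I e_I^\T = e_I$ (using $e_I^\T = e_I$ and $e_I e_I = e_I$ from Axiom~\ref{ax:2} and \eqref{eq:m_rules}).

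The identity $\bar t^{-1} t = e_I$ is the technical heart. I would argue it as follows: $\bar t^{-1}\bar t = e = \sum_{i=1}^r e_i$, and $\bar t = t + \sum_{i \in I} e_i$, so $\bar t^{-1} t = e - \bar t^{-1}\bigl(\sum_{i \in I} e_i\bigr)$. It then remains to show $\bar t^{-1}\bigl(\sum_{i \in I}e_i\bigr) = \sum_{i \in I} e_i$, equivalently that $e_i$ is fixed by $\bar t^{-1}$ for $i \in I$, equivalently $\bar t e_i = e_i$ for $i \in I$. But $\bar t e_i$ is the $i$-th column of $\bar t$, which is $e_i$ by construction (since the $i$-th column of $t$ vanishes and we added $e_i$). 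Hence $\bar t e_i = e_i$, so $\bar t^{-1} e_i = e_i$, giving $\bar t^{-1} t = \sum_{i\notin I} e_i = e_I$ as desired. Here I need the column/row conventions $ae_i$ = "$i$-th column" established after \eqref{eq:innprod}, together with the multiplication rule \eqref{eq:m_mult} and Axiom~\ref{ax:2} to justify that the $i$-th column of $\bar t$ is precisely $e_i$.

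I expect the main obstacle to be bookkeeping rather than a genuine difficulty: one must be careful that all the manipulations $\bar t \bar t^\T = x + e_I'$, $\bar t^{-1} t = e_I$, and the application of \eqref{eq:qut2} are valid in $\UT_+$ (not just $\UT_{++}$), since $t$ itself need not be invertible. The cleanest route is to work entirely with $\bar t \in \UT_{++}$ and $u = \bar t^{-1} \in \UT_{++}$ and only use $t \in \UT_+$ as a fixed limit object feeding into \eqref{eq:qut2}, which is stated for $u, t \in \UT_+$. A secondary point to handle is that $I$ is well-defined: by Proposition~\ref{prop:chol} the proper factor $t$ is unique, so $I$ depends only on $x$; and properness is exactly what guarantees $\bar t \in \UT_{++}$. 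Once these conventions are nailed down, the proof is a short computation, and I would present it in roughly the four displayed steps above: (1) build $\bar t$ and check $\bar t \in \UT_{++}$; (2) set $u = \bar t^{-1}$; (3) prove $\bar t^{-1} t = e_I$ via the column argument; (4) conclude $Q_u(x) = e_I$ via \eqref{eq:qut2}.
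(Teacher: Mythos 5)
Your proposal is correct and is essentially the paper's own argument: the paper constructs the very same element $\tilde u = t + \sum_{i\in I}e_i \in \UT_{++}$ and takes $u = \tilde u^{-1}$, the only (cosmetic) difference being that the paper verifies $\tilde u e_I = t$ and hence $Q_{\tilde u}(e_I) = x$ before inverting via \eqref{eq:qinv}, whereas you verify $\tilde u^{-1}t = e_I$ directly (which is fine, since associativity of products of upper triangular elements is guaranteed by Axiom~(a6)). The opening remark about perturbing into the interior and taking limits is vestigial --- your actual argument is the direct algebraic one, and it works.
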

\begin{proof}
Let $t_{1}, \ldots, t_{r}$ denote the ``columns'' of $t$, so 
that
\[
t = \sum _{i=1}^r t_{i} = \sum _{i \not \in I} t_i,
\]	
where the second equality holds because $t$ is assumed to be proper.

For $i,j \in \{1, \ldots, r\}$ we
have
\[
t_i e_j = \sum_{1\leq k \leq i \leq r} t_{ki}e_j.
\]
By the multiplication rules in \eqref{eq:m_mult} and Axiom~\ref{ax:2}, 
we have
\[
t_i e_j = \begin{cases}
t_i & \text{ if } i = j \\
0 & \text{ otherwise. }
\end{cases}
\]
With this in mind, we define $\tilde u \in \UT_{++}$ as follows: for 
$i \in I$, we set the $i$-th column of $\tilde u$ to be equal to $e_i$ and 
for $i \not \in I$, we set the  $i$-th column of $\tilde u$ to be $t_i$.
We have
\[
\tilde u e_I = \left(\sum_{j \in I} e_j + \sum _{i \not \in I} t_i\right)\left(\sum _{k \not \in I} e_k\right) =\sum _{i \not \in I} t_i = t.
\]	
With that, $Q_{\tilde u}$ maps $e_I = e_I e_I^\T$ to $(\tilde u e_I)(\tilde u e_I)^\T = t t^\T = x$, see \eqref{eq:qut2}.  By \eqref{eq:qinv}, letting $u \coloneqq \tilde u ^{-1}$,
$Q_{u} = Q_{\tilde u}^{-1}$ is a triangular automorphism  mapping $x$ to $e_{I}$.
\end{proof}
Gathering all the pieces collected so far, we have the following result, which contains, in particular, an  analogue of \eqref{eq:psd_faces} and the subsequent discussion.
\begin{theorem}\label{th:hface}
	Let $\stdFace$ be a proper face of a homogeneous cone $\closure \stdHC$ of rank $r$. The following items hold.
	\begin{enumerate}[$(i)$]
		\item $\stdFace$ is projectionally exposed.
		\item $\stdFace$ is a homogeneous cone of rank $s < r$ and		
		 there is a triangular automorphism $Q_u$ of 
		$\stdHC$ such that $Q_u (\stdFace) = \closure \stdCone(\alg_{I})$ holds for some nonempty $I \subseteq \{1,\ldots, r\}$. 
		Furthermore, letting $x \in \reInt \stdFace$ and writing $x = tt^*$ for a proper $t \in \UT_{+}(\alg)$, we have $I = \{i \mid \rho_i(t_{ii}) = 0 \}$ and $s = r - |I|$.
		\item There is a subgroup $\mathcal{G}$ of automorphisms of $\stdFace$ acting simply transitively on the relative interior of $\stdFace$ such that each $g \in \mathcal{G}$ is obtained through the restriction of a triangular automorphism of $\stdHC$ to $\stdFace$.
	\end{enumerate}	
\end{theorem}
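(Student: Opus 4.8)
The plan is to establish item $(ii)$ first, since it is the structural heart of the statement, and then to read off $(i)$ and $(iii)$ by conjugating the constructions of Lemma~\ref{lem:orth_proj} and Proposition~\ref{prop:sub_aut}, respectively, by the triangular automorphism supplied by Lemma~\ref{lem:qei}.

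For $(ii)$, I would fix $x \in \reInt \stdFace$, so that $\stdFace = \minFace(x,\closure\stdHC)$ by \eqref{eq:minF}, write $x = tt^*$ for the unique proper $t \in \UT_+$ of Proposition~\ref{prop:chol}, and set $I \coloneqq \{i \mid \rho_i(t_{ii}) = 0\}$. Lemma~\ref{lem:qei} then yields a triangular automorphism $Q_u$ with $Q_u(x) = e_I$. Since an automorphism of $\closure\stdHC$ permutes faces and commutes with passing to the minimal face, $Q_u(\stdFace) = \minFace(e_I,\closure\stdHC)$; and since $\closure\stdCone(\alg_I)$ is a face of $\closure\stdHC$ by Lemma~\ref{lem:orth_proj} with $e_I = e_I e_I^* \in \stdCone(\alg_I) = \reInt \closure\stdCone(\alg_I)$, a second use of \eqref{eq:minF} identifies that minimal face as $\closure\stdCone(\alg_I)$, giving $Q_u(\stdFace) = \closure\stdCone(\alg_I)$. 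Properness of $\stdFace$ forces $x \notin \reInt\closure\stdHC$, hence $t \notin \UT_{++}$ and $I \neq \emptyset$; since $\alg_I$ is a T-algebra of rank $s \coloneqq r - |I| \le r-1$ and $\stdCone(\alg_I)$ is an open homogeneous cone, $\stdFace$, being linearly isomorphic to $\closure\stdCone(\alg_I)$, is a homogeneous cone of rank $s$ (invoking Proposition~\ref{prop:rank} for the rank claim). One can moreover check that $I$ does not depend on the chosen $x$, using that $\reInt\stdFace = Q_{u^{-1}}(\stdCone(\alg_I)) = \{(u^{-1}v)(u^{-1}v)^* \mid v \in \UT_{++}(\alg_I)\}$ by \eqref{eq:qut2} and that each $u^{-1}v$ is proper with diagonal vanishing exactly on $I$.

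For $(i)$, from $(ii)$ we have $\stdFace = Q_{u^{-1}}(\closure\stdCone(\alg_I))$, while Lemma~\ref{lem:orth_proj} supplies the idempotent $Q_{e_I}$ with $Q_{e_I}(\closure\stdHC) = \closure\stdCone(\alg_I)$. I would then take $\mathcal{P} \coloneqq Q_{u^{-1}} Q_{e_I} Q_u$; using $Q_u Q_{u^{-1}} = Q_e = \mathrm{id}$ from \eqref{eq:qut}--\eqref{eq:qinv} and $Q_{e_I}^2 = Q_{e_I}$, one gets $\mathcal{P}^2 = \mathcal{P}$ and $\mathcal{P}(\closure\stdHC) = Q_{u^{-1}}(\closure\stdCone(\alg_I)) = \stdFace$, so $\stdFace$ is p-exposed. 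For $(iii)$, I would start from the group $\{Q_v \mid v \in \UT_{++}(\alg_I)\}$ of triangular automorphisms of $\stdCone(\alg_I)$, which acts simply transitively on $\reInt\closure\stdCone(\alg_I) = \stdCone(\alg_I)$ by uniqueness of the Cholesky factorization in $\alg_I$ ($Q_{ba^{-1}}$ being the unique element carrying $aa^*$ to $bb^*$). By Proposition~\ref{prop:sub_aut} each $Q_v$ is the restriction to $\H(\alg_I)$ of the triangular automorphism $Q_{\tilde v}$ of $\stdHC$ with $\tilde v \coloneqq v + \sum_{i\in I} e_i$, and a short computation with \eqref{eq:m_mult} and Axiom~\ref{ax:2} shows $v \mapsto \tilde v$ is an injective group homomorphism $\UT_{++}(\alg_I) \to \UT_{++}(\alg)$. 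Setting $\mathcal{G} \coloneqq \{Q_{u^{-1}} Q_{\tilde v} Q_u \mid v \in \UT_{++}(\alg_I)\}$, formulas \eqref{eq:qut}--\eqref{eq:qinv} make $\mathcal{G}$ a subgroup of the triangular automorphisms of $\stdHC$, each element preserving $\stdFace$ since $Q_{u^{-1}} Q_{\tilde v} Q_u(\stdFace) = Q_{u^{-1}}(\closure\stdCone(\alg_I)) = \stdFace$; restricting to $\stdFace$ then gives automorphisms of $\stdFace$. Evaluating at the base point $x = Q_{u^{-1}}(e_I) \in \reInt\stdFace$, the orbit map sends $Q_{u^{-1}} Q_{\tilde v} Q_u$ to $Q_{u^{-1}}(vv^*)$, which is a bijection of $\mathcal{G}$ onto $Q_{u^{-1}}(\stdCone(\alg_I)) = \reInt\stdFace$, so $\mathcal{G}$ acts simply transitively on $\reInt\stdFace$.

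Given Lemmas~\ref{lem:orth_proj}--\ref{lem:qei} and Proposition~\ref{prop:chol}, most of the above is bookkeeping. The step I expect to require the most care is $(iii)$: showing that $\mathcal{G}$ is genuinely a group (which hinges on the \emph{multiplicativity} of the extension $v \mapsto \tilde v$, a fact one has to verify directly since Proposition~\ref{prop:sub_aut} only provides existence of an extension) and that its action on $\reInt\stdFace$ is simple as well as transitive (which hinges on uniqueness of the Cholesky factorization inside $\alg_I$).
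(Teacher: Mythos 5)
Your proposal is correct and follows essentially the same route as the paper's proof: Lemma~\ref{lem:qei} plus the minimal-face characterization \eqref{eq:minF} for item $(ii)$, the conjugated projection $Q_{u^{-1}}Q_{e_I}Q_u$ for item $(i)$, and the conjugate of the triangular group of $\stdCone(\alg_I)$ together with Proposition~\ref{prop:sub_aut} for item $(iii)$. The only (harmless) difference is organizational: the paper defines $\mathcal{G}$ as the conjugate of the intrinsic triangular group $\bar{\mathcal{G}}$ of $\stdCone(\alg_I)$ (so the group property is immediate) and then identifies each element with a restriction via $Q_{u^{-1}}Q_vQ_u = Q_{u^{-1}vu}$, whereas you build $\mathcal{G}$ from the extensions $Q_{\tilde v}$ and therefore need the multiplicativity of $v \mapsto \tilde v$, which you correctly flag as the step requiring verification.
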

\begin{proof}
Let $x \in \reInt \stdFace$ be arbitrary and consider a decomposition of $x = tt^*$ with a proper $t$ as in Proposition~\ref{prop:chol}.
By Lemma~\ref{lem:qei}, there exists a triangular automorphism $Q_u$ that 
maps $x$ to $e_I$, where $I = \{i \mid \rho_i(t_{ii}) = 0 \}$. We have
 \begin{equation}\label{eq:qaux}
Q_u(\stdFace) = Q_u (\minFace(x, \closure\stdHC)) = \minFace(Q_u(x), \closure\stdHC) = \minFace(e_I, \closure\stdHC) = \closure \stdCone(\alg_I),
 \end{equation} 
 where the first equality follows from \eqref{eq:minF}. The second equality holds because $Q_u$ is an automorphism, so it maps a face to another face and is also a bijection  between the relative interiors. 
 Finally, the last equality holds because $e_I \in \reInt (\closure \stdCone(\alg_I) )= \stdCone(\alg_I)$. 
 We note that if $|I| = 0$ we have $e_I = e$ and $\stdCone(\alg_I) = \stdHC$, which  would contradict the fact that $\stdFace$ is proper.
 We have thus obtained item $(ii)$ since homogeneity and rank are preserved by linear isomorphisms (see Proposition~\ref{prop:rank}) and $\stdCone(\alg_I)$ is a homogeneous cone of rank $r - |I| < r$. 

 Next, let $Q_{e_I}$ be the orthogonal projection mapping $\closure \stdHC$ to $\closure\stdCone(\alg_{I})$ as in Lemma~\ref{lem:orth_proj}.
Then, \eqref{eq:qaux} implies that $ Q_{u^{-1}}Q_{e_I} Q_u $ is a  projection 
mapping $\closure \stdHC$ to $\stdFace$, which proves item~$(i)$.

Let $\bar{\mathcal{G}}$ be the group of triangular automorphisms of $\stdCone(\alg_I)$ and we recall that $\bar{\mathcal{G}}$  acts simply transitively on $\stdCone(\alg_I)$. In view of 
\eqref{eq:qaux}, ${\mathcal{G}} \coloneqq Q_{u}^{-1} \bar{\mathcal{G}}Q_{u}$ acts simply transitively on 
$\stdFace$. Also, Proposition~\ref{prop:sub_aut} tells us that for every $Q \in \bar{\mathcal{G}}$, there exists some triangular automorphism $Q_v$ that coincides with $Q$ over $\H(\alg_I)$ which implies that $Q_{u}^{-1} Q Q_{u}$ coincides 
with $Q_{u^{-1}}Q_v Q_{u}$ over $\stdFace$.
We note that $Q_{u^{-1}}Q_v Q_{u}$ is a triangular automorphism of $\stdHC$ since 
$Q_{u^{-1}}Q_v Q_{u} = Q_{u^{-1}vu}$ holds by \eqref{eq:qut} and there is no ambiguity regarding 
$u^{-1}vu \in \UT_{++} $ since $(u^{-1}v)u = u^{-1}(vu)$ which is implied by Axiom~\ref{ax:6}.
Overall, we conclude that, indeed, there exists a subgroup acting simply transitively over $\stdFace$ such that each element arises as a restriction of a triangular automorphism of $\stdHC$. 
This, together with \eqref{eq:qaux}, proves item~$(iii)$. 
\end{proof}
Theorem~\ref{th:hface} provides a relatively complete picture of the facial structure of a homogeneous cone. 
It tells us that not only the faces are projectionally exposed and homogeneous cones themselves, but their homogeneous structure can be obtained from the original cone.

Furthermore, each face is linearly isomorphic to a principal face through an automorphism of the cone. 
Finding the automorphism is an entirely algorithmic process. 
Given some $x \in \reInt \stdFace$, we first decompose $x$ as $tt^\T$  for $t$ a  proper matrix as in Algorithm~\ref{alg:cholesky}, then we compute the index set $I = \{i \mid \rho_i(t_{ii}) = 0 \}$, which reveals the principal face that is isomorphic to $\stdFace$. The actual isomorphism can then be computed by 
finding $u \in \UT_{++}$ such that
\begin{equation}\label{eq:utei}
ut = e_I
\end{equation}
holds and  $u$ always exists by Lemma~\ref{lem:qei}. Solving this system does not pose difficulties given the upper triangular structure of $u$ and $t \in \UT_{+}$.
Then, $Q_u$ is an automorphism of $\closure \stdHC$ mapping $\stdFace$ to a principal face. Letting $\bar e \coloneqq e - e_I$ and noticing that Proposition~\ref{prop:row_vanish} implies that  
$\bar{e}$ exposes the principal face induced by $I$, 
this discussion also leads to the following expression for $\stdFace$
\begin{equation}\label{eq:exp}
\stdFace = \{x \in \closure \stdHC \mid \inProd{Q_u(x)}{\bar{e}} = 0 \} = 
\{x \in \closure \stdHC \mid \inProd{x}{Q_{u^\T}(\bar{e})} = 0 \},
\end{equation}
which also reveals that $Q_{u^\T}(\bar{e})$ is an exposing vector for $\stdFace$.
Following the proof of Theorem~\ref{th:hface}, a projection mapping $\closure \stdHC$ to $\stdFace$ is 
$Q_{v}$, for $v \coloneqq u^{-1}e_Iu$. 
We will see a complete example of this process in Section~\ref{sec:chord}.

Recalling the discussion of the faces of PSD matrices in \eqref{eq:psd_faces}, we can see several similarities but two important difference are that, in general, $Q_{v}$ may not be self-adjoint and $Q_u$ is not necessarily a linear isometry.

Finally, we remark that Chua had previously observed a description of the faces of the PSD cone using triangular matrices, see \cite[Equation~(2.4)]{Chua06}.
Chua also proved the following result for homogeneous cones.
Let $y \in \stdFace$ and define $B$ to be the set of $i$'s for which there exists at least one $z \in \stdFace$ whose unique Cholesky factor $t$ satisfies $t_{ii} \neq 0$.
Writing $y = \hat t \hat t ^\T$ for a proper matrix $\hat t$,  Chua showed that $y \in \reInt \stdFace$ if and only if $\rho_i(\hat t) > 0$ for all $i \in B$, see \cite[Proposition~18]{Chua06}. 
Intuitively this means that, inside $\stdFace$, the elements of the relative interior are precisely the ones that have as many nonzeros as possible in the diagonal of their unique Cholesky factors. 
We note that $B$ itself is not enough decide membership on $\reInt \stdFace$ for an arbitrary $y \in \closure \stdHC$, since there are finitely many possible $B$'s and $\closure \stdHC$ may have infinitely many faces. 
Nevertheless, under the setting of Theorem~\ref{th:hface}, we have $I = \{1,\ldots, r\} \setminus B$, so $B$ identifies the principal face of $\closure \stdHC$ that is linearly isomorphic to $\stdFace$. 
Although the statement \cite[Proposition~18]{Chua06} is only about membership in the relative interior of $\stdFace$ for some $y \in \stdFace$,  its actual proof hints at an expression for $\stdFace$ that is somewhat related to item~$(ii)$ of Theorem~\ref{th:hface}.

Before we move on, we present an example.
\begin{example}
	We consider the \emph{matrix norm cone}, which is a spectrahedral cone used in rank minimization heuristics, see \cite[Section~5.1]{TV23}. 
	It was analyzed in slightly different form in \cite[Example~5]{GT98}. We  define it as
	$$\mathcal{M}_{m,n} \coloneqq \left\{ x \in \psdcone{m+n} \mid  x=\begin{pmatrix}
	V & U \\ U^\T & \alpha I_n
	\end{pmatrix},\alpha \in \Re, U \in \Re^{m\times n}, V \in \Re^{m\times m}\right\},$$
	where $I_n$ is the $n\times n$ identity matrix and $U^\T$ is the usual matrix tranpose of $U$.
	
	We consider the following algebra of matrices
	\begin{equation}\label{eq:matrix_norm}
	\alg^{m,n} \coloneqq \left\{ x \in \Re^{(m+n)\times (m+n)} \mid x = \begin{pmatrix}
	 V & W \\ U^\T & \alpha I_n
	\end{pmatrix}, \alpha \in \Re, W \in \Re^{m\times n}, U \in \Re^{m\times n}, V \in \Re^{m\times m}  \right\}.
	\end{equation}
The multiplication is given by 
	\[
	\begin{pmatrix}
	V_1 & W_1 \\ U_1^\T & \alpha_1 I_n
	\end{pmatrix} 	\begin{pmatrix}
	V_2 & W_2 \\ U_2^\T & \alpha_2 I_n 
	\end{pmatrix}  = 
	\begin{pmatrix}
	V_1V_2+W_1U_2^\T & V_1W_2+\alpha_2W_1 \\ U_1^\T V_2+\alpha_1U_2^\T & (\tr(U_1 W_2^\T)+\alpha_1\alpha_2)I_n 
	\end{pmatrix},
	\]	
	where $\tr(U_1 W_2^\T)$ is the usual matrix trace of $U_1 W_2^\T$.
	The multiplication in $\alg^{m,n}$ differs from the usual matrix multiplication only in the lower right block.
	
	It turns out that  $\alg^{m,n}$ is a $T$-algebra of rank $m+1$. 
	Let $x$ be as in \eqref{eq:matrix_norm} and we will describe the bigradation of $\alg^{m,n}$ in terms of the components of $x$. $\alg^{m,n}_{m+1,m+1}$ is isomorphic to $\RR$ and corresponds to $\alpha$, i.e.,  $\alg^{m,n}_{m+1,m+1} \coloneqq \left \{
	\begin{psmallmatrix}
	0 & 0 \\ 0 & \alpha I_n
	\end{psmallmatrix} \mid \alpha \in \RR \right\}$. 
		For $i < m+1$, $\alg^{m,n}_{i,m+1}$ is isomorphic to $\RR^n$  and it corresponds to the $i$-th row of $W$, while 
	$\alg^{m,n}_{m+1,i}$ is isomorphic to $\RR^n$  and it corresponds to the $i$-th column of $U^*$. 
	Finally, for $i,j < m-1$, $\alg^{m,n}_{i,j}$ 
	corresponds to the $(i,j)$-entry of $V$.
	The involution is the usual matrix transpose. We have 
	$e_{m+1} = 	\begin{psmallmatrix}
	0 & 0 \\ 0 & I_n
	\end{psmallmatrix}$ and, for $i < m+1$, $e_i$ is the diagonal matrix having $1$ in the $(i,i)$-entry and zero elsewhere. 
	The T-algebra trace function $\tr(x)$
	satisfies $\tr(x) = \tr(V)+\alpha$, where $\tr(V)$ is the usual matrix trace of $V$.

	With that, axioms \ref{ax:1} and 
	\ref{ax:2} of T-algebra definition are satisfied.
	Checking axioms \ref{ax:3}-\ref{ax:7} is cumbersome but  relatively straightforward given that the multiplication only differs from the usual matrix multiplication in computations involving the block $\alg^{m,n}_{m+1,m+1}$. 
	For this case, it is more convenient to check \ref{ax:6} and \ref{ax:7} by verifying, equivalently, that 
	for upper triangular $t,u,w \in \UT(\alg^{m,n})$ we have $t(uw) = (tu)w$ and $t(uu^\T) = (tu)u^*$ respectively, see \cite[Remarks~6 and 7]{CH09}. 
	In particular, \ref{ax:6} is immediate given that the multiplication in 	$\alg^{m,n}$ coincides with the usual matrix multiplication for upper triangular matrices.	
	
	%
	
	
	This formulation allow us to apply Theorem~\ref{th:hface} to $\mathcal{M}_{m,n}$. 
	Every principal subalgebra $\alg^{m,k}_I$ is either a usual matrix algebra, if $m+1 \in I$, or a smaller $\alg^{k,n}$ with $k\leq m$ if $m+1 \not \in I$. 
	This implies that all faces of  $\mathcal{M}_{m,n}$ are either linearly isomorphic to $\psdcone{k}$  or to $\mathcal{M}_{k,n}$  for $k \leq m$. Furthermore, $\mathcal{M}_{m,n}$ is projectionally exposed.
	
\end{example}

\paragraph{Some consequences and discussion}
The results so far together with Algorithm~\ref{alg:cholesky} provide an algorithmic way of identifying extreme rays of $\closure \stdHC$.
\begin{corollary}[Identifying extreme rays]\label{col:ext}
Let $x \in \closure \stdHC$ and write $x = tt^*$ for some proper $t \in \UT_{+}$. Then $x$ generates an extreme ray if and only if 
$\rho_{i}(t_{ii}) > 0$ for one and exactly one $i \in \{1,\ldots,r\}$.
\end{corollary}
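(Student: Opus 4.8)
The plan is to leverage Theorem~\ref{th:hface}, particularly item~$(ii)$, which already tells us everything about the structure of the face $\minFace(x, \closure \stdHC)$ in terms of the index set $I = \{i \mid \rho_i(t_{ii}) = 0\}$. First I would recall that $x$ generates an extreme ray of $\closure \stdHC$ if and only if $\minFace(x, \closure \stdHC)$ is an extreme ray, i.e., a one-dimensional face. By item~$(ii)$ of Theorem~\ref{th:hface}, this minimal face is linearly isomorphic (via a triangular automorphism $Q_u$ of $\stdHC$) to the principal face $\closure \stdCone(\alg_I)$, which is a homogeneous cone of rank $s = r - |I|$. Since linear isomorphisms preserve dimension, $\minFace(x, \closure \stdHC)$ is an extreme ray if and only if $\closure \stdCone(\alg_I)$ is.

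Next I would argue that a homogeneous cone $\closure \stdCone(\algD')$ of rank $s$ is an extreme ray (equivalently, one-dimensional, equivalently linearly isomorphic to $\RR_{\geq 0}$) precisely when $s = 1$. One direction is clear: a rank-$1$ T-algebra is isomorphic to $\RR$ by Axiom~\ref{ax:1}, so $\closure \stdCone$ is just $\RR_{\geq 0}$. For the other direction, if $s \geq 2$, then $\closure \stdCone(\alg_I)$ contains the two linearly independent extreme rays generated by $e_{n_1}$ and $e_{n_2}$ (using Proposition~\ref{prop:row_vanish} applied to the rank-$s$ T-algebra $\alg_I$, or just noting $\rho_{n_1}(e_{n_1}) = 1 > 0$ while the rest of the diagonal vanishes), hence it is at least two-dimensional and not an extreme ray. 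Therefore $\minFace(x, \closure \stdHC)$ is an extreme ray if and only if $|I| = r - 1$, i.e., exactly one index $i$ has $\rho_i(t_{ii}) > 0$. Combined with $x \in \reInt \minFace(x, \closure \stdHC)$ from \eqref{eq:minF}, which guarantees the Cholesky factor of $x$ has the diagonal pattern dictated by $I$, this gives the stated characterization.

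I should be careful about one subtlety: Corollary~\ref{col:ext} is stated for an arbitrary proper $t$ with $x = tt^*$, and I am invoking Theorem~\ref{th:hface}, whose hypothesis is that the face is proper. If $x = 0$ then $t = 0$, all $\rho_i(t_{ii}) = 0$, and $x$ does not generate an extreme ray (extreme rays are generated by nonzero elements by definition), so the biconditional holds vacuously on both sides. If $x \neq 0$ but $\minFace(x,\closure\stdHC) = \closure \stdHC$ itself (so the face is not proper), then $\rho_i(t_{ii}) > 0$ for all $i$ since $x \in \reInt \closure \stdHC = \stdHC$ and the Cholesky factor lies in $\UT_{++}$; here $x$ generates an extreme ray only if $r = 1$, which matches ``exactly one $i$.'' So for $r \geq 2$ a nonzero interior point correctly fails both conditions, and for $r = 1$ both conditions hold. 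For the remaining case — $x \neq 0$ and the minimal face proper — Theorem~\ref{th:hface} applies directly. The main obstacle, then, is not any deep computation but rather organizing this case analysis cleanly and making sure the degenerate cases ($x = 0$, or $x$ interior) are handled so that the ``exactly one'' phrasing is exactly right; the substantive content is entirely inherited from Theorem~\ref{th:hface} and Proposition~\ref{prop:row_vanish}.

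Concretely, the write-up would run: assume first that $x$ generates an extreme ray, so $x \neq 0$; by \eqref{eq:minF}, $x \in \reInt \stdFace$ where $\stdFace \coloneqq \minFace(x, \closure\stdHC) = \{\alpha x \mid \alpha \geq 0\}$ is one-dimensional. If $\stdFace = \closure\stdHC$ then $r = 1$ (since $\closure\stdHC$ is one-dimensional only then) and the Cholesky factor $t$ of $x \in \stdHC$ lies in $\UT_{++}$, giving the single index $i = 1$ with $\rho_1(t_{11}) > 0$. Otherwise $\stdFace$ is proper, and by item~$(ii)$ of Theorem~\ref{th:hface} it is linearly isomorphic to a homogeneous cone of rank $s = r - |I|$ with $I = \{i \mid \rho_i(t_{ii}) = 0\}$; since $\stdFace$ is one-dimensional and linear isomorphisms preserve dimension, $s = 1$, i.e., exactly one index has $\rho_i(t_{ii}) > 0$. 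Conversely, suppose exactly one $i$ has $\rho_i(t_{ii}) > 0$, so $x \neq 0$ and $I = \{1,\dots,r\}\setminus\{i\}$. If $r = 1$ then $\closure\stdHC = \RR_{\geq 0}$ and $x$ trivially generates an extreme ray. If $r \geq 2$ then $I \neq \emptyset$, so by item~$(ii)$ of Theorem~\ref{th:hface} the minimal face of $x$ is linearly isomorphic to the rank-$1$ homogeneous cone $\closure\stdCone(\alg_I)$, which is linearly isomorphic to $\RR_{\geq 0}$ by Axiom~\ref{ax:1}; hence $\minFace(x,\closure\stdHC)$ is one-dimensional and $x$ generates an extreme ray.
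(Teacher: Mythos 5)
Your proof is correct and follows essentially the same route as the paper's: reduce via Theorem~\ref{th:hface} to the principal face $\closure\stdCone(\alg_I)$, observe its dimension is at least $r-|I|$ because the $e_i$ for $i\notin I$ lie in it, and conclude that being an extreme ray forces $r-|I|=1$ and conversely. Your extra care with the degenerate cases ($x=0$ and $x$ in the interior, where the minimal face is not proper and Theorem~\ref{th:hface} does not literally apply) is a small tidiness improvement over the paper's write-up, but not a different argument.
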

\begin{proof}
Let $\stdFace = \minFace(x, \closure \stdHC)$. We observe that $x$ generates an extreme ray if and only if the dimension of $\stdFace$ is one.
By Theorem~\ref{th:hface}, $\stdFace$ is linearly isomorphic to 
$\closure \stdCone(\alg_{I})$, where $I = \{i \mid \rho_i(t_{ii}) = 0 \}$.
Since $e_i \in \closure \stdCone(\alg_{I})$ for $i \not \in I$, the dimension of $\closure \stdCone(\alg_{I})$ is at least $r - |I|$. 
So the only way that $\stdFace$ can be an extreme ray is if $r - |I| = 1$, which happens if and only if $\rho_{i}(t_{ii}) > 0$ for one and exactly one $i \in \{1,\ldots,r\}$. Conversely, if $r - |I| = 1$ then a direct verification shows that the dimension of $\closure \stdCone(\alg_{I})$ is one, so $\stdFace$ is an extreme ray.
\end{proof}

Given a closed convex cone $\stdCone$, a \emph{chain of faces of length $\ell$} is a sequence of faces of $\stdCone$ such that $\stdFace _1 \subsetneq \cdots \subsetneq \stdFace_{\ell}$. The length of a longest chain of faces of $\stdCone$, denoted by $\ell_{\stdCone}$, is an important quantity in the study of the regularization technique \emph{facial reduction} \cite{BW81,WM13,Pa13_2}. In particular, it can be used to upper bound the number of steps required by facial reduction algorithms.
Through this connection, $\ell_{\stdCone}$ appears in different contexts, e.g., as a way to upper bound the length of infeasibility certificates \cite[Theorem~4]{LiuPataki}.
The quantity $\ell_{\stdCone}$ also appears in the study of the expressive power of cones \cite{Sa20}.
Next, we compute $\ell_\stdCone$ for a homogeneous cone.


\begin{corollary}\label{col:chain}
Let $\closure \stdHC$ be a homogeneous cone of rank $r$, 
then $\ell_{\stdCone} = r+1$.
\end{corollary}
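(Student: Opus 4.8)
The plan is to establish $\ell_{\stdCone} = r+1$ by exhibiting a chain of faces of length $r+1$ and then arguing that no longer chain can exist. The key tool is Theorem~\ref{th:hface}(ii), which tells us that any proper face of a rank-$r$ homogeneous cone is itself a homogeneous cone of strictly smaller rank.

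\textbf{Lower bound.} First I would produce an explicit chain. Consider the principal faces $\closure \stdCone(\alg_{I_k})$ for $I_k \coloneqq \{r-k+1, \ldots, r\}$ with $k = 0, 1, \ldots, r$. By Proposition~\ref{prop:row_vanish} and Lemma~\ref{lem:orth_proj}, each $\closure \stdCone(\alg_{I_k})$ is a face of $\closure \stdHC$, and it is a homogeneous cone of rank $r-k$. Since a homogeneous cone of rank $r-k$ has dimension at least $r-k$ (it contains the linearly independent idempotents $e_i$, $i \notin I_k$; in fact dimension $\geq r-k$ suffices), the ranks strictly decrease, so the faces are strictly nested:
\[
\{0\} \subsetneq \closure \stdCone(\alg_{I_{r-1}}) \subsetneq \cdots \subsetneq \closure \stdCone(\alg_{I_1}) \subsetneq \closure \stdCone(\alg_{I_0}) = \closure \stdHC.
\]
Here $\closure \stdCone(\alg_{I_r}) = \{0\}$ is the trivial face of rank $0$. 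This is a chain of length $r+1$, so $\ell_{\stdCone} \geq r+1$.

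\textbf{Upper bound.} For the reverse inequality, I would argue by induction on $r$ that any chain of faces of a rank-$r$ homogeneous cone has length at most $r+1$. The base case $r = 0$ (the zero cone) is trivial. For the inductive step, take any chain $\stdFace_1 \subsetneq \cdots \subsetneq \stdFace_\ell$ in $\closure \stdHC$. If $\stdFace_\ell = \closure \stdHC$, then $\stdFace_{\ell-1}$ is a proper face, hence by Theorem~\ref{th:hface}(ii) it is a homogeneous cone of rank $s \leq r-1$; the truncated chain $\stdFace_1 \subsetneq \cdots \subsetneq \stdFace_{\ell-1}$ lives inside $\stdFace_{\ell-1}$ and is a chain of faces of that cone (faces of a face are faces of the original, and conversely a face of $\closure\stdHC$ contained in $\stdFace_{\ell-1}$ is a face of $\stdFace_{\ell-1}$), so by induction $\ell - 1 \leq s + 1 \leq r$, giving $\ell \leq r+1$. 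If $\stdFace_\ell \subsetneq \closure \stdHC$, then the whole chain sits inside the proper face $\stdFace_\ell$ of rank $\leq r-1$, and induction gives $\ell \leq r$. In both cases $\ell \leq r+1$, completing the induction. Combining the two bounds yields $\ell_{\stdCone} = r+1$.

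\textbf{Main obstacle.} The only slightly delicate point is the bookkeeping around faces of faces: I need that a chain of faces of a proper face $\stdFace$ of $\closure \stdHC$ is also a chain of faces of $\closure\stdHC$ and vice versa, and that Theorem~\ref{th:hface}(ii) applies recursively (i.e., $\stdFace$ being a homogeneous cone, its own proper faces have still smaller rank). The transitivity of the face relation handles the first concern, and since Theorem~\ref{th:hface} is stated for arbitrary homogeneous cones $\closure\stdHC$ and linear isomorphisms preserve rank and the facial structure (Proposition~\ref{prop:rank}), the recursion is legitimate. A minor point to state cleanly is that a homogeneous cone of rank $s$ has dimension at least $s$, which is immediate from the presence of the $s$ linearly independent idempotents $e_i$ in its T-algebra description, ensuring the chain in the lower bound is genuinely strict.
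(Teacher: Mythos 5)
Your proposal is correct and follows essentially the same route as the paper: the upper bound comes from Theorem~\ref{th:hface}$(ii)$ applied recursively so that ranks strictly decrease along any chain (the paper states this in one line, you unfold it as an induction on rank), and the lower bound is the same explicit chain of nested principal faces. The extra care you take with strictness of the inclusions and with faces-of-faces is sound but not a departure from the paper's argument.
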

\begin{proof}
Take any chain of faces $\stdFace _1 \subsetneq \cdots \subsetneq \stdFace_{\ell}$ of length $\ell$. By item~$(ii)$ of Theorem~\ref{th:hface}, each $\stdFace_i$ is a homogeneous cone of rank smaller than that of $\stdFace_{i+1}$. As $\closure \stdHC$ has rank $r$, we have $\ell \leq r+1$.
Using principal faces
we can construct a chain of faces of length $r+1$ as follows.
\[
\{0\}=\closure\stdCone(\alg_{\{1,2,\ldots,r\}})   \subsetneq \closure\stdCone(\alg_{\{1,2,\ldots,r-1\}}) \subsetneq \cdots \subsetneq \closure \stdCone(\alg_{\{1\}}) \subsetneq  \closure \stdHC.
\]
\end{proof}

The automorphism group of  $\closure \stdHC$ acts on the set of (non-empty) faces in a natural way. 
It turns out that this action has  finitely many orbits and each orbit contains a unique principal face.
\begin{corollary}[Orbits of faces]\label{col:orbits}
Let $\stdFace(\closure \stdHC)$ denote the set of faces of a rank $r$ homogeneous cone $\closure \stdHC$. The action of the triangular automorphisms on $\stdFace(\closure \stdHC)$ has $2^r$ orbits and each orbit has a unique representative among principal faces.
\end{corollary}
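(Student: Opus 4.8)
The plan is to show that every face lies in the orbit of some principal face, that there are exactly $2^r$ principal faces, and that no two distinct principal faces lie in the same orbit. The first two claims are almost immediate from what has already been established; the genuine work is in the last one.

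First, by Theorem~\ref{th:hface}$(ii)$, for every proper face $\stdFace$ there is a triangular automorphism $Q_u$ with $Q_u(\stdFace) = \closure\stdCone(\alg_I)$ for some nonempty $I\subseteq\{1,\ldots,r\}$, and the improper face $\closure\stdHC$ is the principal face corresponding to $I=\emptyset$. Hence every orbit contains at least one principal face, and since the principal faces are indexed by the subsets of $\{1,\ldots,r\}$ — of which there are $2^r$ — it only remains to prove that the map $I\mapsto \closure\stdCone(\alg_I)$ is injective on orbits, i.e. that if $Q_u$ is a triangular automorphism with $Q_u(\closure\stdCone(\alg_I)) = \closure\stdCone(\alg_J)$, then $I=J$. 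Note that $|I|=|J|$ follows at once from Proposition~\ref{prop:rank} since $\closure\stdCone(\alg_I)$ and $\closure\stdCone(\alg_J)$ are linearly isomorphic homogeneous cones, so they have the same rank $r-|I|=r-|J|$; but equality of cardinalities is not yet equality of sets.

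To pin down the sets themselves I would use the exposing vectors identified in Proposition~\ref{prop:row_vanish} together with the adjoint formula \eqref{eq:qadj}. Recall that $e_i$ generates an extreme ray of $\stdHC^*$ and, by Proposition~\ref{prop:row_vanish} applied to $\algD$, $\bar e_J := \sum_{i\in J} e_i$ exposes $\closure\stdCone(\alg_J)$ as a face of $\closure\stdHC$, i.e. $\closure\stdCone(\alg_J) = \closure\stdHC \cap \{\bar e_J\}^\perp$; likewise $\closure\stdCone(\alg_I) = \closure\stdHC\cap\{\bar e_I\}^\perp$. Applying $Q_u$ and using $\langle Q_u(x), y\rangle = \langle x, Q_{u^\T}(y)\rangle$ from \eqref{eq:qadj}, the hypothesis $Q_u(\closure\stdCone(\alg_I)) = \closure\stdHC\cap\{\bar e_J\}^\perp$ forces the minimal face of $\stdHC^*$ containing $Q_{u^\T}(\bar e_J)$ to equal the minimal face of $\stdHC^*$ containing $\bar e_I$ (both being the conjugate face of the common face $\closure\stdCone(\alg_I)$, via \eqref{eq:face_conj} and \eqref{eq:minF}). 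Since $Q_{u^\T}$ is a triangular automorphism of $\stdHC^*$ (it equals $Q_{u^\T}$ with $u^\T$ lower triangular, acting on the cone $\closure\stdCone(\algD)$), this says $\bar e_I$ and $Q_{u^\T}(\bar e_J)$ generate the same face of $\stdHC^*$. Now the key computation: because $u$ is upper triangular with positive diagonal entries, the quadratic map $Q_{u^\T}$ sends $e_i$ to an element whose ``$(i,i)$ diagonal component'' is $\rho_i(u_{ii})^2>0$ and whose $(k,k)$ components vanish for $k>i$; summing, $Q_{u^\T}(\bar e_J)$ has a strictly positive $(i,i)$ component exactly for $i\le \max J$ that are ``reachable,'' and more precisely one shows $\langle Q_{u^\T}(\bar e_J), e_k\rangle > 0 \iff k\in J'$ where $J'$ is determined by the triangular structure; comparing the supports of $\bar e_I = \sum_{i\in I} e_i$ and $Q_{u^\T}(\bar e_J)$ along the diagonal idempotents $e_1,\ldots,e_r$ — which are mutually orthogonal and form part of an orthogonal basis adapted to the bigradation — forces $I=J$.

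The main obstacle is exactly this last support computation: one must argue that a triangular automorphism cannot move the ``diagonal support'' $\{i : \langle z, e_i\rangle > 0\}$ of $z=\bar e_J$ to a different subset. A clean way is to observe that $\closure\stdCone(\alg_I)$ is characterized intrinsically as the set of $x\in\closure\stdHC$ with $\rho_i(x_{ii})=0$ for all $i\in I$ (this is Lemma~\ref{lemma:row_vanish} iterated, as in the proof of Proposition~\ref{prop:row_vanish}), and that the linear span $\spanVec\closure\stdCone(\alg_I) = \H(\alg_I)$ is spanned by $\{e_j : j\notin I\}\cup\{a_{jk}+a_{kj} : j,k\notin I\}$; since a linear isomorphism $Q_u$ must send $\H(\alg_I)$ onto $\H(\alg_J)$, one can recover $I$ and $J$ from invariants of these subspaces. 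Concretely, $\H(\alg_J)\cap\{\text{diagonal part}\}$ is spanned by $\{e_j: j\notin J\}$, and since $Q_u$ preserves the cone and the trace form up to the triangular action, counting dimensions and tracking which $e_i$ survive under $Q_u$ pins down $J=I$. Once injectivity on orbits is in hand, the count $2^r$ of orbits and the existence of a unique principal representative per orbit follow immediately, completing the proof.
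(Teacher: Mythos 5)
Your overall skeleton matches the paper's: reduce everything to showing that no triangular automorphism carries $\closure\stdCone(\alg_I)$ onto $\closure\stdCone(\alg_J)$ with $I\neq J$, and note that $|I|=|J|$ comes for free from rank invariance. The problem is that the decisive injectivity step is never actually carried out. Your ``key computation'' asserts that $\inProd{Q_{u^\T}(\bar e_J)}{e_k}>0$ iff $k$ lies in some set $J'$ ``determined by the triangular structure,'' but you neither identify $J'$ nor explain why comparing it with $I$ forces $I=J$ --- and indeed the literal diagonal support of $Q_{u^\T}(\bar e_J)=(u^\T\bar e_J)(u^\T\bar e_J)^\T$ is $\{k:\exists\, j\in J,\ j\le k,\ u_{jk}\neq 0\}$, which in general strictly contains $J$, so the support is \emph{not} preserved by the triangular action and a naive ``support comparison'' does not close the argument. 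Your fallback (``counting dimensions and tracking which $e_i$ survive'') also cannot work as stated: distinct principal subalgebras $\alg_I$, $\alg_J$ with $|I|=|J|$ routinely have $\H(\alg_I)$ and $\H(\alg_J)$ of equal dimension (e.g.\ in $\psdcone{n}$ all the $\H(\alg_{\{i\}})$ do), so no dimension count distinguishes them, and ``which $e_i$ survive'' is not a well-defined invariant of the map. You flag this obstacle yourself, which is honest, but it is precisely the content of the corollary.

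For the record, your route is completable, just not the way you sketched it: one should argue that $z\coloneqq Q_{u^\T}(\bar e_J)$ and $\bar e_I$ lie in the relative interior of the \emph{same} face of $\stdHC^*$ (this uses facial exposedness of $\stdHC^*$ so that $\Delta\Delta$ is the identity on its faces), that this face is the principal dual face whose elements have diagonal support contained in $I$ with equality on the relative interior, and that $\inProd{z}{e_j}\ge\rho_j(u_{jj})^2>0$ for every $j\in J$; this gives $J\subseteq I$ and then $I=J$ by cardinality. The paper avoids all of this by first normalizing: from $Q_t(\closure\stdCone(\alg_I))=\closure\stdCone(\alg_J)$ it uses transitivity on $\stdCone(\alg_J)$ and Proposition~\ref{prop:sub_aut} to produce a triangular $Q_u$ with $Q_u(e_I)=e_J$ exactly, then invokes uniqueness of the proper Cholesky factorization (Proposition~\ref{prop:chol}) to conclude $ue_I=e_J$, from which a one-line computation shows $u$ is diagonal and hence $I=J$. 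That normalization step is the idea your proposal is missing.
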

\begin{proof}
By Theorem~\ref{th:hface} each $\stdFace \face \closure \stdHC$ is isomorphic to some principal face under a triangular automorphism. Since there are at most $2^r$ principal faces, this is also an upper bound on the number of orbits. 
It only remains to show that there are indeed $2^r$ orbits which can be done by arguing that each orbit contains at most one principal face.

Suppose that  $Q_t$ is a triangular automorphism and 
$Q_t (\closure \stdCone(\alg_{I})) =    \closure \stdCone(\alg_{J})$ holds.
Then, $|I| =|J|$ must also hold since rank is preserved by automorphisms by Proposition~\ref{prop:rank}.
Let $e_I$ and $e_J$ be the identity elements of $\alg_I$ and $\alg_J$ respectively. We have $e_I \in \reInt \closure \stdCone(\alg_{I}) = \stdCone(\alg_I)$, so $Q_{t}e_I$ is an element of $\stdCone(\alg_J)$ since linear transformations map relative interiors into relative interiors, e.g., see \cite[Theorem~6.6]{rockafellar}.

As $\stdCone(\alg_J)$  is a homogeneous cone, there exists 
a triangular automorphism $Q_{\tilde t}$ of $\stdCone(\alg _J)$ that maps $Q_te_I$ to $e_J$.
Then, Proposition~\ref{prop:sub_aut} implies 
that there exists a triangular automorphism $Q_{\bar t}$ of $\stdHC$ that maps $Q_te_I$ to $e_J$.
Therefore, $u \coloneqq \bar t t$ is such 
that $Q_u$ is a triangular automorphism  of
$\stdHC$ satisfying $Q_u (e_I ) = e_J$, by \eqref{eq:qut}.
Which in view of \eqref{eq:qut2} implies that
\[
Q_u (e_I) = (ue_I)(ue_I)^\T = e_J = e_J e_J^\T.
\]
Following similar computations as in the proof of Lemma~\ref{lem:qei}, we see that $ue_I$ is a proper matrix.
Since the decomposition in proper matrices is unique (Proposition~\ref{prop:chol}), we conclude that $ue_I = e_J$ must hold.
Next, we verify that this implies $I = J$.

Recalling that $e_I = \sum_{i \not \in I} e_i$, we have that
$(ue_{I})_{ij} =  \sum _{k=1}^r u_{ik} (e_{I})_{kj}$ is $0$ if $j  \in I$ and $u_{ij}$ if $j \not \in I$.
Since $ue_I = e_J$ must hold, we have $u_{ij} = 0$ for $i\neq j$, i.e., 
$u$ only has nonzero components on its ``diagonal''.
Finally, since $u \in \UT_{++}$, $ue_I = e_J$ forces that $I = J$.
\end{proof}
Corollary~\ref{col:orbits} is about the orbits under triangular automorphisms. 
If we consider the orbits under the \emph{full} automorphism group, the number of orbits never increases, which implies that, in particular, it is still finite and principal faces can still be taken to be representatives of orbits. However, it may happen that each orbit may contain several principal faces\footnote{For example, all extreme rays of $\psdcone{n}$ (which correspond to the faces generated by rank-$1$ PSD matrices) are in the same orbit under the action of the full automorphism group.}.

A finite number of orbits of faces is an interesting property that is not shared by all finite dimensional convex cones. For example, that is not the case for the $p$-cones for $p \in (1,\infty)$, $p \neq 2$ in dimension at least $3$, as is implied by \cite[Corollary~12]{IL19}\footnote{
	Corollary~12 therein implies that each orbit of the action of the automorphism group on the set of faces is finite. 
	As there infinitely many faces, there  must be an infinite number of orbits.}.

Next, we take a look at projectional exposedness. It was shown in \cite[Proposition~33]{L21} that all symmetric cones are orthogonally projectionally exposed. 
In view of Theorem~\ref{th:hface}, the class of cones known to be projectionally exposed includes the homogeneous cones but it seems that the price to pay for losing self-duality is that we can no longer ensure that the projections are  self-adjoint under the inner product that comes from the T-algebra structure.
We will revisit this issue in Proposition~\ref{prop:orth_proj}.

More generally, one may wonder how large the class of projectionally exposed cones is.
Notably, for hyperbolicity cones and spectrahedral cones, two classes of cones larger than homogeneous cones, it is not currently known whether they are projectionally exposed in general. The strongest result so far is that they are amenable, see \cite[Corollary~3.5]{LRS20} and \cite{LRS23}.

\subsection{The dual cone}\label{sec:dual}
A T-algebra for the dual cone $\stdHC^*$ can be obtained as in \eqref{eq:algD}.
Under the bigradation induced by $\algD$, upper triangular elements in $\algD$ correspond to lower triangular elements in $\alg$. 
All the development done so far can thus be translated to the dual side by appealing to \eqref{eq:algD}.
Still, we believe it is useful to explicitly state certain dual results and translate the terminology, since there are certain subtleties.

For convenience, let $\LT_{++}(\alg) \coloneqq \UT_{++}(\alg)^\T$,
$\LT_{+}(\alg) \coloneqq \UT_{+}(\alg)^\T$ and
$\LT(\alg) \coloneqq \UT(\alg)^\T$.
A \emph{triangular automorphism} of $\stdHC^*$ is a linear map of the form $Q_{l}$ for 
$l \in \LT_{++}(\alg)$. Analogous properties to \eqref{eq:qut}, \eqref{eq:qinv}, \eqref{eq:qut2} hold for $l,t \in \LT_{+}$.
For example, $Q_{l}(tt^\T) = (lt)(lt)^\T$ holds for $l,t \in \LT_{+}(\alg)$, see also \cite[Proposition~7]{CH09}.

A potential pitfall is the definition of proper (lower) triangular matrices. 
Applying the definition of proper (upper) triangular matrices to $\algD$ and translating the indices back to $\alg$, we see the ``correct way'' of defining properness. 
Indeed, $t \in \LT_{+}(\alg)$ is said to be \emph{proper} if $\rho_{i}(t_{ii}) = 0$ implies that 
the $i$-th ``column'' (not the row!) of $t$ vanishes\footnote{This is somewhat confusing, so here is a quick example. For $r = 3$, let $t$ be an upper triangular element in $\algD$ and let us look at the third column, which corresponds to $t_{13}, t_{23}, t_{33}$.
In view of \eqref{eq:algD}, if we decompose $t$ in the bigradation of $\alg$ (i.e., express $t$ as sum of elements in the $\alg_{ij}$'s), $t$ corresponds to a lower triangular $\hat t \in \LT(\alg)$ and the third column of $t$ gets mapped to the first column of $\hat t$, so that $t_{33} = \hat t_{11}$, $t_{23} = \hat t_{21}$ and $t_{13} = \hat t_{31} $.   }.
Analogous to Algorithm~\ref{alg:cholesky}, we also have a Cholesky factorization in proper lower triangular matrices. 
Essentially, we apply Algorithm~\ref{alg:cholesky} to $\algD$ and 
translate back the indices to $\alg$ in 
two steps. First, the indices 
in $t_{ij}$ and $y_{ij}$ are replaced with $r+1-i$ and $r+1-j$, respectively and, then we simplify the resulting \emph{for loop}. 
The result is Algorithm~\ref{alg:cholesky_dual}.

%
\begin{algorithm}[h]
	\caption{Dual Generalized Cholesky decomposition in a homogeneous cone }
	\label{alg:cholesky_dual}
	\begin{algorithmic}[1]
		\REQUIRE{$x \in \stdHC^*$}
		\STATE $l \leftarrow 0$, $t \leftarrow 0$, $y \leftarrow x $
		
		\FOR{$i = 1,\ldots,r$}
		\STATE $y \leftarrow y - ll^\T$
		\IF {$\rho_{i}(y_{ii}) > 0$}
		\STATE $t_{ji} \leftarrow y_{ji}/\sqrt{\rho_{i}(y_{ii})}, \quad \forall j \in \{i,\ldots, r\}.$
		\ELSE
		\STATE $t_{ji} \leftarrow 0, \quad \forall j \in \{i,\ldots, r\}.$
		\ENDIF
		\STATE $l \leftarrow$ $i$-th column of $t$
		\ENDFOR
		\RETURN{$t$}
	\end{algorithmic}
\end{algorithm}

The only remaining technical detail is that principal faces need to be replaced with their duals taken \emph{inside the corresponding subalgebra}. 
More precisely, for a principal subalgebra $\alg_I$,
let 
\[
\stdCone^D(\alg_I) \coloneqq \{tt^\T \mid t \in \LT_{++}(\alg_{I})\},
\]
so that 
\begin{equation}\label{eq:dual_principal}
\closure (\stdCone^D(\alg_I)) = \{tt^\T \mid t \in \LT_{+}(\alg_{I})\}
\end{equation}
holds.
A face of $\stdHC^*$ as in \eqref{eq:dual_principal} is said to be a \emph{principal dual face}.
We note that $\closure (\stdCone^D(\alg_I))$ is 
the dual of $\closure \stdCone(\alg_I)$ \emph{inside  $\H(\alg_{I})$}, i.e., 
we have 
$\closure (\stdCone^D(\alg_I)) = \closure \stdCone(\alg_I)^* \cap \H{(\alg_I)}$.
\begin{theorem}\label{th:hface_dual}
	Let $\stdHC$ be a homogeneous cone of rank $r$ 
	and  $\stdFace$ be a proper face of $\stdHC^*$.
	\begin{enumerate}[$(i)$]
		\item $\stdFace$ is projectionally exposed.
		\item $\stdFace$ is a homogeneous cone of rank $s < r$ and		
		there is a triangular automorphism $Q_l$ of 
		$\stdHC^*$ such that $Q_l (\stdFace) = \closure (\stdCone^D(\alg_{I}))$ holds for some nonempty $I \subseteq \{1,\ldots, r\}$.
		Furthermore, letting $x \in \reInt \stdFace$ and writing $x = tt^*$ for a proper $t \in \LT_{+}(\alg)$, we have $I = \{i \mid \rho_i(t_{ii}) = 0 \}$ and $s = r - |I|$.
		\item There is a subgroup $\mathcal{G}$ of automorphisms of $\stdFace$ acting simply transitively on the relative interior of $\stdFace$ such that each $g \in \mathcal{G}$ is obtained through the restriction of a triangular automorphism of $\stdHC^*$ to $\stdFace$.
	\end{enumerate}	
\end{theorem}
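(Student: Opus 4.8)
The strategy is to obtain Theorem~\ref{th:hface_dual} as a corollary of Theorem~\ref{th:hface} by applying the latter to the dual T-algebra $\algD$ defined in \eqref{eq:algD}, and then carefully translating the statement back to $\alg$. Recall from \eqref{eq:dual_cone} that $\stdHC^* = \closure\stdCone(\algD)$, so $\stdHC^*$ is itself the (closure of the) homogeneous cone of the rank-$r$ T-algebra $\algD$. Thus every proper face $\stdFace$ of $\stdHC^*$ is, by Theorem~\ref{th:hface}, projectionally exposed, a homogeneous cone of rank $s<r$, and there is a triangular automorphism $Q_u$ of $\stdCone(\algD)$ with $Q_u(\stdFace) = \closure\stdCone(\algD_J)$ for some nonempty $J\subseteq\{1,\ldots,r\}$, where $J = \{j \mid \rho_j^{\algD}(t'_{jj})=0\}$ for the proper factorization $x = t'(t')^*$ of any $x\in\reInt\stdFace$ taken \emph{inside $\algD$}, and $s = r-|J|$. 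Items $(i)$ and $(iii)$ then follow immediately once one checks that "triangular automorphism of $\stdCone(\algD)$" translates to "triangular automorphism of $\stdHC^*$" in the sense defined in Section~\ref{sec:dual}; this is exactly the content of the identification $\UT_{++}(\algD) \leftrightarrow \LT_{++}(\alg)$ via the index reversal $i\mapsto r+1-i$, which is also how Algorithm~\ref{alg:cholesky_dual} was derived.

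\textbf{Key steps.} First I would set up the dictionary between $\algD$ and $\alg$: an element is upper triangular in $\algD$ iff it is lower triangular in $\alg$, the grading component $\algD_{ij}$ equals $\alg_{r+1-i,r+1-j}$, the unit $e_i^{\algD}$ of $\algD_{ii}$ equals $e_{r+1-i}$ of $\alg$, and $\rho_i^{\algD} = \rho_{r+1-i}$. Second, I would verify that the principal subalgebra $(\algD)_J$ corresponds exactly to $\alg_I$ with $I \coloneqq \{r+1-j \mid j\in J\}$, and that $\{tt^* \mid t\in\UT_{++}((\algD)_J)\} = \{tt^*\mid t\in\LT_{++}(\alg_I)\} = \stdCone^D(\alg_I)$, so that the principal face $\closure\stdCone(\algD_J)$ of $\stdHC^*$ is precisely the principal dual face $\closure(\stdCone^D(\alg_I))$. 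Third, I would check that "proper in $\algD$" translates to "proper in $\LT_+(\alg)$" as defined (vanishing of the $i$-th \emph{column}, not row), so that the factorization $x = t'(t')^*$ inside $\algD$ becomes $x = tt^*$ for a proper $t\in\LT_+(\alg)$, with $t'_{jj}=0 \iff t_{r+1-j,r+1-j}=0$, hence $J$ corresponds to $I = \{i \mid \rho_i(t_{ii})=0\}$ under the index reversal, as claimed in item~$(ii)$; the count $s = r-|I| = r-|J|$ is immediate. Finally, $Q_u$ being a triangular automorphism of $\stdCone(\algD)$ means $u\in\UT_{++}(\algD)$, i.e., $u$ (read in $\alg$) lies in $\LT_{++}(\alg)$, so $Q_u$ is a triangular automorphism of $\stdHC^*$ in the Section~\ref{sec:dual} sense, and the analogue of \eqref{eq:qut2} for $\LT_+$ (already recorded in the excerpt, cf.\ \cite[Proposition~7]{CH09}) ensures all the relevant identities carry over; likewise the subgroup $\mathcal{G}$ from item~$(iii)$ of Theorem~\ref{th:hface} applied to $\algD$ consists of restrictions of triangular automorphisms of $\stdCone(\algD) = \stdHC^*$.

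\textbf{Main obstacle.} The conceptual content is entirely subsumed by Theorem~\ref{th:hface}; the only real work — and the step most prone to error — is bookkeeping the index reversal consistently across all objects: gradings, units, $\rho$'s, the upper/lower triangular swap, the column-vs-row subtlety in the definition of properness, and the identification of $\algD_J$ with $\alg_I$. I expect the properness translation to be the trickiest point to state cleanly, since a "column" of an upper triangular element of $\algD$ becomes a "column" of a lower triangular element of $\alg$ only after the index reversal, and getting the direction right is exactly the footnote-worthy subtlety flagged in Section~\ref{sec:dual}. Once that dictionary is pinned down, items $(i)$, $(ii)$, $(iii)$ drop out of the corresponding items of Theorem~\ref{th:hface} with no further calculation.
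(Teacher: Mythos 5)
Your proposal is correct and is exactly the paper's argument: the paper's proof is a one-line reduction that applies Theorem~\ref{th:hface} to the dual T-algebra $\algD$ of \eqref{eq:algD} and translates the indices back, relying on the dictionary (upper/lower triangular swap, index reversal $i\mapsto r+1-i$, properness via columns, principal subalgebras versus principal dual faces) set up in Section~\ref{sec:dual}, which is precisely the bookkeeping you spell out. No gaps.
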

\begin{proof}
Following the discussion so far, we apply Theorem~\ref{th:hface} to $\algD$ as in \eqref{eq:algD} and translate the indices back to the bigradation of $\alg$.	
\end{proof}
Analogous to \eqref{eq:utei}, the automorphism that maps $\stdFace$ to a principal dual face can be
obtained by finding $l \in \LT_{++}(\alg)$ satisfying $lt = e_{I}$.

We conclude this subsection by observing that 
conjugate faces in a homogeneous cone have complementary ranks.
\begin{proposition}\label{prop:conj_rank}
Let $\stdHC$ be a homogeneous cone of rank $r$
and  let $\stdFace \face \closure \stdHC$ be a face of rank $s$. 
Then,
$\stdFace^\Delta = \stdHC^* \cap \stdFace^\perp$	has rank $r-s$.
\end{proposition}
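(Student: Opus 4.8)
## Proof Plan for Proposition~\ref{prop:conj_rank}

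The plan is to reduce to the case of a \emph{principal} face by exploiting Theorem~\ref{th:hface}, and then compute the conjugate of a principal face explicitly inside the T-algebra. First I would invoke Theorem~\ref{th:hface}$(ii)$ to obtain a triangular automorphism $Q_u$ of $\closure\stdHC$ with $Q_u(\stdFace) = \closure\stdCone(\alg_I)$, where $|I| = r - s$. The key observation is that conjugation is compatible with the automorphism action in the following sense: if $Q$ is an automorphism of $\closure\stdHC$, then $(Q(\stdFace))^\Delta = Q^{-\T}(\stdFace^\Delta)$, because $y \in \stdFace^\perp$ iff $\langle Q(x), Q^{-\T}(y)\rangle = \langle x, y\rangle$ vanishes for all $x$, and $Q^{-\T} = Q_{u^{-\T}}$ maps $\stdHC^*$ onto $\stdHC^*$. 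Since $Q_{u^{-\T}}$ is a linear isomorphism and rank is invariant under linear isomorphisms (Proposition~\ref{prop:rank}), it suffices to show that $(\closure\stdCone(\alg_I))^\Delta$ has rank $s = r - |I|$, i.e. that the principal face has a conjugate face of complementary rank.

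Next I would identify the conjugate of the principal face. By Proposition~\ref{prop:row_vanish}, $\closure\stdCone(\alg_I) = \bigcap_{i \in I}(\closure\stdHC \cap \{e_i\}^\perp)$, and $e_I = \sum_{i \notin I} e_i$ lies in its relative interior. Hence by \eqref{eq:face_conj}, $(\closure\stdCone(\alg_I))^\Delta = \stdHC^* \cap \{e_I\}^\perp$. Writing $\bar e \coloneqq e - e_I = \sum_{i \in I} e_i$, I claim this conjugate face equals $\minFace(\bar e, \stdHC^*)$; indeed, $\bar e \in \stdHC^*$ (it is a sum of the extreme-ray generators $e_i$ of $\stdHC^*$ from Proposition~\ref{prop:row_vanish}), and $\bar e$ exposes the principal face, so by the discussion around \eqref{eq:minF}--\eqref{eq:face_conj} the face conjugate to $\minFace(\bar e, \stdHC^*)$ is exactly $\closure\stdCone(\alg_I)$; applying $\Delta$ once more and using that principal faces are exposed (Lemma~\ref{lem:orth_proj} shows they are even o.p.-exposed, hence exposed, so $\stdFace^{\Delta\Delta} = \stdFace$) gives $(\closure\stdCone(\alg_I))^\Delta = \minFace(\bar e, \stdHC^*)$.

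It then remains to compute the rank of $\minFace(\bar e, \stdHC^*)$. Here I would use the dual framework of Section~\ref{sec:dual}: $\stdHC^* = \closure\stdCone(\algD)$ where $\algD_{ij} = \alg_{r+1-i, r+1-j}$, and under this identification $\bar e = \sum_{i\in I} e_i$ corresponds to $\sum_{j \in I'} e_j$ with $I' = \{\,r+1-i : i \in I\,\}$, which is the identity element $e_{\{1,\dots,r\}\setminus I'}$ of a principal subalgebra of $\algD$ of rank $|I| = r - s$. By Theorem~\ref{th:hface_dual}$(ii)$ (or directly by the already-proved Theorem~\ref{th:hface} applied to $\algD$, since $\bar e$ is a proper Cholesky factor square with diagonal support exactly $I$), the minimal face of $\stdHC^*$ containing $\bar e$ is linearly isomorphic to a principal subalgebra cone of $\algD$ of rank $r - |I| = s$. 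Wait — I should double-check the index bookkeeping: the diagonal of $\bar e$ as an element of $\algD$ is supported on $I'$, so the ``vanishing set'' in the sense of Theorem~\ref{th:hface_dual} is $\{1,\dots,r\}\setminus I'$, of size $r - |I| = s$, giving rank $r - s$... this is the step I expect to be the main obstacle: getting the complementation right, since there are \emph{two} complementations happening (face $\leftrightarrow$ conjugate face on one side, and $I \leftrightarrow$ its complement inside the rank-$r$ index set on the other), and they must be disentangled carefully using the $r+1-i$ relabeling of \eqref{eq:algD}. The cleanest way to avoid a sign error is: rank of $\stdFace$ is $s = r - |I|$; rank of $(\closure\stdCone(\alg_I))^\Delta = \minFace(\bar e,\stdHC^*)$ equals the number of nonzero diagonal entries of the proper Cholesky factor of $\bar e$ in $\algD$, which is $|I'| = |I| = r - s$, as claimed.
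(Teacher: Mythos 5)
Your proposal is correct and follows essentially the same route as the paper's proof: reduce to a principal face via a triangular automorphism, transport the conjugate face across using $Q_u^\T = Q_{u^\T}$ (i.e.\ \eqref{eq:qadj}), identify the conjugate of $\closure \stdCone(\alg_I)$ with the principal dual face $\closure (\stdCone^D(\alg_J))$, $J=\{1,\ldots,r\}\setminus I$, and read off its rank $|I|=r-s$. Two small points: your intermediate claim that it suffices to show the conjugate of the principal face has rank ``$s=r-|I|$'' is a slip (the target is $r-s=|I|$, which is what your final computation correctly delivers), and the detour through $\minFace(\bar e,\stdHC^*)$ via double conjugation invokes exposedness of the wrong face (one needs $\minFace(\bar e,\stdHC^*)$, not $\closure\stdCone(\alg_I)$, to satisfy $\stdFace^{\Delta\Delta}=\stdFace$) --- it is cleaner to argue, as the paper does, that for $y\in\stdHC^*$ one has $\inProd{e_I}{y}=0$ if and only if $\inProd{e_i}{y}=0$ for all $i\notin I$, so that Proposition~\ref{prop:row_vanish} (applied to $\algD$) identifies $\stdHC^*\cap\{e_I\}^\perp$ with the principal dual face directly.
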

\begin{proof}
Let $x \in \reInt \stdFace$.
By Theorem~\ref{th:hface}, there exists a triangular automorphism $Q_{\hat u}$  of $\stdHC$ that maps $\stdFace$ into a principal face 
$\closure \stdCone(\alg_I)$. 
We may assume that $Q_{\hat u}$ maps $x$ to 
$e_I$, see the discussion around \eqref{eq:utei}.
Letting $u \coloneqq \hat u^{-1}$, $Q_{u} = Q_{\hat u}^{-1}$ does the opposite: it maps 
$e_I$ to $x$.
Recalling that $\stdHC^* \cap \stdFace^\perp = 
\stdHC^* \cap \{x\}^\perp$,
we have by \eqref{eq:qadj}
\[
\stdHC^* \cap \{x\}^\perp = \{y \in \stdHC^* \mid \inProd{Q_{u}(e_{I})}{y} = 0\} = 
\{y \in \stdHC^* \mid \inProd{e_{I}}{Q_{u^\T}(y)} = 0\}.
\]
We have that $e_I = \sum _{i \not \in I} e_i$ and $|I| = r-s$.
$Q_{u^\T}$ is a triangular automorphism 
of $ \stdHC^*$, so $Q_{u^\T}(y) \in \stdHC^*$ if and only if $y \in \stdHC^*$.
In addition, as the $e_i$'s are in $\closure\stdHC$,
for $ y \in \stdHC^*$ we have
\[
\inProd{e_{I}}{Q_{u^\T}(y)} = 0 \Leftrightarrow
\inProd{e_{i}}{Q_{u^\T}(y)} = 0, \forall i \not \in I.
\]

Therefore, from Proposition~\ref{prop:row_vanish}, for $y \in \stdHC^*$, we
have  $\inProd{e_{I}}{Q_{u^\T}(y)} = 0$ if and only if every row and column of $Q_{u^\T}(y)$ that is
indexed by $i$  not in $I$ vanishes.
This happens if and only if  $Q_{u^\T}(y)$ is in the principal \emph{dual} face 
$\closure (\stdCone^D(\alg_J))$, where
$J \coloneqq \{1,\ldots, r\} \setminus I$.
In particular, 
\[
\stdHC^* \cap \{x\}^\perp = Q_{u^\T}^{-1}(\closure (\stdCone^D(\alg_J))).
\]

That is, $\stdHC^* \cap \{x\}^\perp$ is 
linearly isomorphic to a principal \emph{dual} face of rank $r - |J| = |I| = r-s$, which concludes the proof.
\end{proof}
We note that the proof of Proposition~\ref{prop:conj_rank} gives a recipe for determining the conjugate face $\stdFace^\Delta$. 
If $x \in \reInt \stdFace$ and $x=tt^\T$ for a proper $t$, 
we let  $u \in \UT_{++}$ be such that $ue_{I} = t$, where $I \coloneqq \{i \mid \rho_{i}(t_{ii}) = 0\}$. 
With that, $Q_{u^*}$ is an automorphism of $\stdHC^*$ that maps 
$\stdFace^\Delta$ to $\closure (\stdCone^D(\alg_J))$, where 
$J \coloneqq  \{i,\ldots, r\} \setminus I$.

\subsection{Other approaches and related results}\label{sec:other}
The T-algebra framework is not the only the tool suitable for the analysis of homogeneous cones. In this subsection, we discuss some other possibilities, related results and justify our usage of T-algebras.

\paragraph{Siegel cones}
Let $t \in \Re, u \in \Re^{n-1}, x \in \mathcal{S}^{n-1}$.
The matrix $\begin{psmallmatrix}
t & u \\ u^\T & x
\end{psmallmatrix}$ is positive definite if and only if $t > 0$ and its Schur complement $x - u^\T u/t$ is positive definite. Or, equivalently, $t > 0$ and $xt -u ^\T u $ is positive definite.
Notably, ``$xt - u^\T u$ being positive definite'' is a constraint over $(n-1)\times (n-1)$ matrices.
This allow us to construct $\psdcone{n}$ recursively via smaller dimensional PSD cones and we can repeat this argument until we reach $\psdcone{1}$, which is isomorphic to the nonnegative orthant. 
We can also view this construction in a bottom-up fashion and, starting from the nonnegative orthant, the cone $\psdcone{n}$ arises by $n$ applications of Schur complements. 

A similar technique is available for homogeneous cones.
Given a regular homogeneous cone $\stdCone \subset \RR^n$ and a bilinear form $B:\RR^p\times\RR^p\to \RR^n$, the \emph{Siegel cone of $\stdCone$ and $B$} is defined as
$$\SC(\stdCone,B)\coloneqq \closure \{(x,u,t)\in\RR^{n}\times\RR^p\times\RR \mid t> 0,~tx-B(u,u)\in \stdCone\}$$
and it is homogeneous if $B$ is a so-called ``homogeneous $\stdCone$-bilinear form'', whose precise definition we omit.
Importantly, every closed regular homogeneous cone $\stdCone$ of dimension at least two is linearly isomorphic to a Siegel cone $\SC(\stdCone_1,B_1)$, where $\stdCone_1$ is a lower dimensional homogeneous cone $\stdCone_1$ and $B_1$ is a homogeneous $\stdCone_1$-bilinear form (see \cite{Ro66}, \cite[Chapter~2]{Gin92}, \cite[Section~3]{GT98}). 
In this way, starting from the nonnegative orthant, every closed regular homogeneous cone can be obtained by repeated applications of this construction.

T-algebras and Siegel cones are complementary approaches that generalize distinct aspects of PSD matrices.
While T-algebras provide a generalized notion of Cholesky factorization, the Siegel cone construction generalizes Schur complements. 

Naturally, arguments involving Siegel cones tend to be inductive. If we wish to show that all homogeneous cones have some property up to linear isomorphism, all we need to show is that the nonnegative orthant satisfies it (the easy part) and that if a homogeneous cone $\stdCone$ satisfies it then $\SC(\stdCone,B)$ satisfies it too (the delicate part).
For instance, this is how the facial exposedness of homogeneous cones was established in \cite{TT03}. 
Similarly, although we did not explore this possibility, 
it is likely possible to prove the projectional exposedness of homogeneous cones via an inductive argument using Siegel cones.

The Siegel cone approach allows very elegant proofs, but in our opinion, it is less convenient for describing a given arbitrary homogeneous cone. 
After all, while we may use Schur complements to prove properties of PSD matrices, we rarely think of a $n\times n$ positive definite matrix as $n$ Schur complements taken in sequence.

Similarly, for an arbitrary homogeneous cone $\stdCone$, it is useful to furnish the ambient space with an appropriate basis and reason about membership in $\stdCone$ in terms of this basis in a direct (i.e., non-recursive) way, which is what the T-algebra framework gives us.
  Of course, it is not impossible to do something analogous under the Siegel cone framework. 
  Doing so, however, naturally leads to algebras of generalized matrices that play a very similar role to the triangular matrices in the T-algebra construction, see, for example, \cite[pg.75-77 and \S 1.Theorem~2]{Gin92}.
  In our opinion, going along this route is not significantly different from using T-algebras from the beginning and has a similar amount of algebraic complexity.
  
We emphasize that rather than furnishing the shortest possible proofs of our statements, we are interested in convenient descriptions of the objects appearing in our discussion. 




\paragraph{Matrix realizations}
Let $\stdCone$ be a homogeneous cone. 
Then $\stdCone$ is \emph{spectrahedral}, which means that it is linearly isomorphic to an intersection of the form $\psdcone{n} \cap \mathcal{L} $, where $\mathcal{L}$ is a subspace of the space of $n\times n$ real symmetric matrices $\mathcal{S}^n$. Here, we refer to $\psdcone{n} \cap \mathcal{L} $ as a \emph{matrix realization} of $\stdCone$.
As discussed in \cite[Section~6]{TV23}, the existence of matrix realizations of homogeneous cones is implicit in earlier works by Vinberg \cite{V63} and Rothaus \cite{Ro66} which provided the theoretical backing for later approaches \cite{CH03,FB02,Ishi06,Ishi15}.

The geometric properties of an arbitrary spectrahedral cone can be quite different from homogeneous cones. For example, as mentioned previously, it is not known whether arbitrary spectrahedral cones are projectionally exposed.
Therefore, as far as we can see, results such as Theorem~\ref{th:hface} do not seem to follow simply from the {mere existence} of matrix realizations. 

Nevertheless, discussions on matrix realizations of homogeneous cones often include results on how to obtain a group of linear transformations that act transitively into the relative interior of $\psdcone{n} \cap \mathcal{L}$, e.g., see \cite[Theorem~3]{Ishi15}. 
With that, although we have not worked out the details, it seems entirely plausible to give alternative proofs of the results we have shown so far by making use of matrix realizations and appropriate descriptions of transitive subgroups of the automorphism group.
One advantage of this approach is that it does not use T-algebras, thus bypassing some of the algebraic unpleasantness and bringing the discussion into the more familiar territory of PSD matrices.

A  disadvantage is that matrix realizations may be wasteful because for an arbitrary homogeneous cone $\stdCone$ of dimension $n$, in the worst case, we may need  $n \times n$ PSD matrices to represent it, e.g., see \cite[Corollary~4.3]{CH03} and \cite[Theorem~4]{Ishi11}.
This worst case is indeed achieved, e.g., for $\stdCone = \Re^n_+$ as we may verify that a necessary condition for $\psdcone{m} \cap \mathcal{L}$ to be linearly isomorphic to $\Re^n_+$ is that $m \geq n$ holds\footnote{The longest chain of faces of $\psdcone{m}$ is $m+1$ and the faces of $\psdcone{m} \cap \mathcal{L}$ are of the form $\stdFace \cap \mathcal{L}$ where $\stdFace$ is a face of $\psdcone{m}$ satisfying $(\reInt \stdFace) \cap \mathcal{L} \neq \emptyset$. Furthermore, suppose that $\stdFace_1$ and $\stdFace_2$ are two such faces of $\psdcone{m}$ and that the proper inclusion $\stdFace _1 \cap \mathcal{L} \subsetneq \stdFace _2 \cap \mathcal{L}$ holds.
	Then, we have $\reInt(\stdFace _1 \cap \mathcal{L}) = \reInt(\stdFace _1) \cap \mathcal{L} \subseteq \stdFace _2 \cap \mathcal{L}$, where the first equality follows from \cite[Corollary~6.5.1]{rockafellar}. That is, $\reInt(\stdFace _1)$ passes through $\stdFace_2$, which implies that $\stdFace_1 \subsetneq \stdFace_2$.
	As the longest chain of faces of $\Re^n_+$ is $n+1$, the only way we can fit a chain of faces of length $n+1$ into $\psdcone{m} \cap \mathcal{L}$  is if $m \geq n$ holds.
}.

%

When considering different formulations for the same optimization problem, if all else is the same, it is preferable to use the one for which the ambient space has dimension as small as possible. 
That is one of the reasons why it is rare to solve a second-order cone program by resorting to semidefinite programming formulations, see also related comments in \cite[Section~1]{CH03}.

Another undesirable fact is that a cone and its dual may need different $n$'s in their matrix realizations.  For example the Vinberg cone and its dual are both  homogeneous cones of rank~$3$. While the latter has a natural matrix representation over $3 \times 3$ matrices, the former cannot be written in the form $\psdcone{3}\cap \mathcal{L}$, see this footnote\footnote{We only provide a sketch of the argument. 
	The Vinberg cone is defined as $
	 \stdCone \coloneqq \left\{(y_1,y_2,y_3,y_4,y_5) \in \mathbb{R}^5 \mid \begin{psmallmatrix}
	y_1 & y_2 \\
	y_2 & y_4 
	\end{psmallmatrix} \in \psdcone{2}, \begin{psmallmatrix}
	y_1 & y_3 \\
	y_3 & y_5 
	\end{psmallmatrix} \in \psdcone{2}  \right\}$. 
	Nonzero polynomials of minimal degree that vanish on the boundary of $\stdCone$ must have degree $4$, see \cite[Section~4.1]{GIL24}. 
	If $\stdCone$ were linearly isomorphic to $\psdcone{3}\cap \mathcal{L}$ for some subspace $\mathcal{L}$, 
	the restriction of the determinant function to $\psdcone{3}\cap \mathcal{L}$
	 would lead to a polynomial of degree at most $3$ that vanishes on the boundary of $\stdCone$, which is a contradiction. On the other hand, $\stdCone^*$ is $\left\{(x_1,x_2,x_3,x_4,x_5) \in \mathbb{R}^5 \mid \begin{psmallmatrix}
	 	x_1 & x_2 & x_3\\
	 	x_2 & x_4 & 0\\
	 	x_3 & 0   & x_5\\
	 \end{psmallmatrix} \in \psdcone{3} \right\}$, which indeed has a matrix realization over $3 \times 3$ matrices. Finally, we remark that whether $\stdCone$ or $\stdCone^*$ is referred as the ``Vinberg cone'' depends on the reference. Ishi calls $\stdCone$ the ``Vinberg cone'' \cite[Section~5.2]{Ishi00}, while, up to a permutation of rows and columns, Tun\c{c}el and Vanderberghe call $\stdCone^*$ the ``Vinberg cone'' \cite[Example~6.3]{TV23}.}.

The T-algebra representation, albeit cumbersome, is economical since both
$\closure \stdHC $ and its dual $(\closure \stdHC)^*$ are realized as cones of generalized matrices in the same space $\HE(\alg)$, which has the same dimension of $\closure \stdHC$ and allows, for example, an intuitive discussion of the complementarity between ranks of conjugate faces as in Proposition~\ref{prop:conj_rank}.

\paragraph{Connection to Ishi's results}
Here we will describe the connections of our discussion so far with a
general result by Ishi \cite{Ishi00}.
Let $\stdCone$ be an open homogeneous cone and $H$ a group that acts linearly and simply transitively on $\stdCone$, see \cite{Ishi00} for more details on the assumptions. 
Then, Ishi shows that there are $2^r$ distinct orbits of the action of $H$ onto $\closure \stdCone$, see \cite[Theorems A and 3.5]{Ishi00}. 
This is done by invoking the theory of normal j-algebras, some Lie algebra theory and showing that the action of $H$ onto $\closure \stdCone$ can be expressed via generalized triangular matrices.
Although the algebraic framework is different, the development in \cite{Ishi00} is somewhat close in spirit to what we have done here.
In particular, translating to our language, Ishi proved that 
\[
\closure \stdHC = \bigcup _{I \subseteq \{1,\ldots, r\}}\mathcal{O}_{I},
\]
where $\mathcal{O}_I$ is the orbit of $e_{I} = \sum _{i \not\in I} e_i$ under the action of the triangular automorphisms and the union is disjoint.
This is essentially equivalent to Corollary~\ref{col:orbits}.
Although there is no discussion of facial structure in \cite{Ishi00},
using Ishi's result as a starting point, it seems feasible to give another proof of  Theorem~\ref{th:hface}.
But in this case, some  arguments regarding projectional exposedness and the realization of faces as homogeneous cones would still be necessary.
While the discussion here has some overlap with \cite{Ishi00}, 
our point of view is different and we emphasize
certain concrete aspects such as Algorithm~\ref{alg:cholesky}, how to explicitly identify faces, exposing vectors and the automorphisms needed to reveal them as in \eqref{eq:utei} and \eqref{eq:exp}.
Our development is also entirely done in the T-algebraic language, which is important from an algorithmic point of view, as it gives easy access to a self-concordant barrier of optimal parameter, e.g., see \cite[Section~3.1]{CH09}, \cite[Theorem~4.1]{GT98}.


%
\section{Applications to homogeneous chordality}\label{sec:chord}

For this section, in contrast to Remark~\ref{rem:usual} and the previous sections, given a real matrix $a \in M^{n\times n}$, we  return to the usual matrix convention where $a_{ij}$ indicates the $(i,j)$-entry of $a$.
Then, given a graph $G$ with vertices $V=\{1,...,n\}$ and edges $E$, we associate to it a convex cone $\psdG{G} \subseteq \psdcone{n}$ given by
\begin{equation*}
\psdG{G}\coloneqq\{x \in  \psdcone{n} \, | \, x_{ij}=0 \textrm{ for all } i\not=j \textrm{ such that } \{i,j\} \not \in E\}
\end{equation*}
and we denote by $\mathcal{S}(G)$ the space spanned by $\psdG{G}$.
If $G$ is chordal, the cone $\psdG{G}$ is
generated by its rank one matrices, e.g., \cite[Theorem~2.3]{AHM88}. 
Chordal graphs have found extensive applications in solving semidefinite programs efficiently, e.g., \cite{FKMN01,NFFKM03,VA15}.
In particular,  it is well-known that, by reordering rows and columns if necessary, positive definite matrices with a chordal sparsity pattern have Cholesky factorizations that respect the underlying sparsity pattern, i.e., factorizations with ``zero fill-in''.

When $G$ not only is chordal but also does not contain induced subgraphs isomorphic to a  path on $4$ vertices, by  \cite[Theorem~A]{Ishi13} they are also homogeneous and we will refer to such a graph $G$ as being a \emph{homogeneous chordal graph}. 
This class of cones was also extensively studied in \cite{TV23}. 
A graph that does not contain  an induced subgraph isomorphic to a  path on $4$ vertices is also called a \emph{cograph}, so, alternatively, a 
homogeneous chordal graph can be defined as a chordal cograph.
Cholesky factorizations under homogeneous chordal sparsity have stronger properties when compared to plain chordal sparsity, e.g., inverses also respect the underlying sparsity pattern \cite[item~(ii) of Theorem~3.1]{TV23}.

%

The T-algebra underlying $S_+(G)$ is relatively simple, but the ordering of the vertices matter. 
Chordal homogeneous graphs have a very interesting ordering, called a \emph{trivially perfect elimination ordering}. 
This means that the following two properties are satisfied.
\begin{enumerate}[$({o}1)$]
	\item \label{el1} If $\{i,j\}, \{i,k\} \in E$ and 
	$i < j < k$ hold, then $\{j,k\} \in E$.
	\item \label{el2} If $\{i,j\}, \{j,k\} \in E$ and 
	$i < j < k$ hold, then $\{i,k\} \in E$.
\end{enumerate}
See more details in \cite[Section~2.3]{TV23}.
An ordering that only satisfies \ref{el1} is called a \emph{perfect elimination ordering} and all chordal graphs possess one. 
An ordering satisfying both \ref{el1} and \ref{el2} is a privilege of homogeneous chordality.
In what follows, we will assume that the vertices are ordered following 
a {trivially perfect elimination ordering}.

Let $\alg(G) \coloneqq \{a \in M^{n\times n} \mid a_{ij}= a_{ji} = 0 \textrm{ for all } i\not=j \textrm{ such that } \{i,j\} \not \in E \}$. 
We take the straightforward decomposition $\alg(G) = \bigoplus _{i,j=1}^r \alg _{ij}$ where for $\{i,j\} \in E$ or $i = j$, $\alg _{ij} \cong \RR$ is the subspace of $M^{n\times n}$ of matrices that are zero outside the $(i,j)$-entry, i.e., $a \in \alg_{ij}$ if and only if $a_{kl} = 0$ for $(k,l) \neq (i,j)$.
Otherwise, we set $\alg _{ij}$ to the zero subspace $\{0\}$.
Overall, $\alg(G)$ is the set of $n\times n$ matrices with the sparsity pattern prescribed by $G$. The involution is defined as the usual matrix transposition and the multiplication is defined as matrix multiplication, followed by the projection onto the sparsity pattern defined by $G$.
We denote such a projection by $\pi_G$ so that 
$\pi_G(a)$ is the result of ``zeroing out'' the entries of $a_{ij}$ for which $\{i,j\} \not \in E$ and $i\neq j$. With that, denoting the multiplication in $\alg(G)$ by $\star$, we have for $a, b \in \alg(G)$
\[
a\star b = \pi_G(ab)
\]
and the $\alg_{ij}$ satisfy \eqref{eq:m_rules} so it is indeed a bigradation of $\alg(G)$. We also recall that $\pi_G$ is self-adjoint with respect to the trace inner product.


To see that this is indeed a T-algebra we have to check the seven axioms.  For the sake of clarity and preciseness, we recall once more that for $a \in \alg(G)$, $a_{ij}$ denotes the $(i,j)$-entry of $a$ rather than the element of $\alg_{ij}$ corresponding to $a$. This is a subtle, but luckily, unimportant distinction\footnote{After all, the $(i,j)$-entry of $a \star b$ corresponds exactly to the $\alg_{ij}$ component of $a \star b$ in the bigradation of $\alg(G)$.}.

Axioms \ref{ax:1}, \ref{ax:2}, \ref{ax:3} and \ref{ax:5} follow directly from the basic definitions and properties of the usual matrix multiplication, since the projection onto the sparsity pattern plays no role in them.

Axiom \ref{ax:4} needs a little more thought. We have to check that for matrices $a,b,c \in \alg(G)$ we have 
$\tr((a\star b)\star c) = \tr(a\star(b \star c))$. As $\pi_G$ never zeroes diagonal entries, this is equivalent to checking that $\tr(\pi_G(ab)c)=\tr(a \pi_G(bc))$ holds. 
First we prove $\tr(\pi_G(ab)c)=\tr(abc)$ by showing that $\pi_G(ab)c$ and $abc$ have the same diagonal entries. 
Note that if $[\pi_G(ab)]_{ij}$ is different from $[ab]_{ij}$, this means $\{i,j\}$ is not in $E$, so $c_{ji}=0$. Therefore 
$$[\pi_G(ab)c]_{ii}=\sum_{j=1}^n [\pi_G(ab)]_{ij} c_{ji}=\sum_{j=1}^n [ab]_{ij} c_{ji} =[abc]_{ii}.$$
The same works for $a \pi_G(bc)$ so we have axiom \ref{ax:4}.

For axioms \ref{ax:6} and \ref{ax:7} the ordering of the vertices is quite essential.  
Translating to the usual linear algebra language, \ref{ax:6} amounts to requiring that 
for $a \in \mathcal{A}_{ij}, b \in \mathcal{A}_{jk}, c \in \mathcal{A}_{kl}$ such that $1 \leq i \leq j \leq k \leq l \leq r$ we have
\begin{equation}\label{eq:ax6aux}
a \star (b \star c) = (a \star b) \star c.
\end{equation}
We start by observing that if, say, $i = j$, since $b \star c \in \mathcal{A}_{jl}$ and $b \in \mathcal{A}_{jk}$, we would have 
$a \star (b \star c) = a_{ii}(b \star c) = (a_{ii} b) \star c = (a \star b) \star c$, by \ref{ax:2}. By similar arguments, \eqref{eq:ax6aux} is true if $j = k$ or $k = l$.
Therefore, in order to check  \ref{ax:6}, we may then assume that $i < j < k < l$.
In this case,  if $\{i,j\}, \{j,k\}$ or $\{k,l\}$ is not in $E$, then $a$, $b$ or $c$ would be zero, so both sides of \eqref{eq:ax6aux} would be  zero. But if they are all in $E$ then by \ref{el2} all the possible edges between $\{i,j,k,l\}$ are in $E$, so the projection $\pi_G$ will not have any effect and we simply have the associativity of matrix multiplication.

For axiom \ref{ax:7} we proceed similarly. Let 
 $a \in \mathcal{A}_{ij}, b \in \mathcal{A}_{jk}, c \in \mathcal{A}_{lk}$ such that $1\leq i \leq j \leq k \leq r$, $1 \leq l \leq k \leq r$ and we will check 
 that
\begin{equation}\label{eq:ax7aux}
a \star (b \star c^*) = (a \star b) \star c^*,
\end{equation}
which implies \ref{ax:7}.
We start by noting that, analogously, 
if $i = j$, $j = k$ or $l = k$ holds, then the validity of \eqref{eq:ax7aux} follows from \ref{ax:2}.
So we consider the case where  $i < j < k $ and 
$l < k$. If $\{i,j\}, \{j,k\}$ or $\{l,k\}$ is not in $E$ then both sides of \eqref{eq:ax7aux} are zero, so 
 we may assume that these edges are all in $E$.
 Then, \ref{el2} implies that $\{i,k\} \in E$. 
We have five possibilities.
If $j = l$, we already have all edges between 
$\{i,l,k\} = \{i,j,k\}$ in $E$, so \eqref{eq:ax7aux} is true because $\pi_G$ does not zero any entries. 
The case $i = l$ is analogous as we have all edges between $\{i,j,k\} = \{l,j,k\}$. 
The other three possibilities are $i < j  < l < k $,
$i < l < j < k$ or $l < i < j < k$.
In either case, if  $\{i,l\} \not \in E $, then both sides of \eqref{eq:ax7aux} are zero, so 
we may assume that $\{i,l\} $ is in $E$.
Then, since $\{i,j\},\{i,l\} \in E$, we have $\{j,l\} \in E$ by either \ref{el1} or \ref{el2}
and we conclude that all possible six edges between $\{i,j,k,l\}$ are in $E$.
Therefore, the projection $\pi_G$ does not zero any elements in all the products appearing in \eqref{eq:ax7aux} and we have the usual associativity of matrix multiplication, which implies \eqref{eq:ax7aux}.

%
%
%
%
\medskip

The inner product that comes with this T-algebra structure (see \eqref{eq:innprod}) coincides with the usual matrix trace inner product since
$\tr(a\star b^\T) = \tr(\pi_G(ab^\T) ) = \tr(ab^\T)$.

Next we will verify that $\psdG{G}$ coincides with the dual cone 
$\stdCone(\alg(G))^*$.
For simplicity we write $\UT_{++}(G) \coloneqq \UT_{++}(\alg(G))$,
$\UT_{+}(G) \coloneqq \UT_{+}(\alg(G))$,
$\UT(G) \coloneqq \UT(\alg(G))$,
$\LT_{++}(G) \coloneqq \UT_{++}(G)^\T$,
$\LT_{+}(G) \coloneqq \UT_{+}(G)^\T$ and
$\LT(G) \coloneqq \UT(G)^\T$.
In particular, $\LT_{+}(G)$ and $\UT_{+}(G)$ are, respectively, lower and upper triangular matrices with nonnegative diagonal and sparsity pattern determined by $G$.

The dual cone $\stdCone(\alg(G))^*$ corresponds to elements of the form $l\star l^\T$ for $l \in \LT_{+}(G)$.
An interesting consequence of the  ordering of the graph is that the projection is not needed.
This is a known fact, 
but for the sake of completeness, we verify this. Let $l \in \LT(G)$ and $i > j$, then
\begin{equation}\label{eq:ll_chord}
(ll^\T)_{ij}= 
\sum_{{i \geq k \text{ and } j\geq} k} l_{ik} l_{jk} = \sum_{j\geq k} l_{ik} l_{jk} = l_{ij}l_{jj}+\sum _{k=1}^{j-1} l_{ik} l_{jk}.
\end{equation}
If $\{i,j\} \in E$, then $(\pi_{G}(ll^\T))_{ij} = (ll^\T)_{ij}$, so suppose that $\{i,j\} \not \in E$.
If there exists $k$ such that $k < j < i$ and $l_{ik}$ and $l_{jk}$ are both nonzero, then $\{k,j\}, \{k,i\} \in E$, which implies that $\{i,j\} \in E$ by \ref{el1}, a contradiction.
Therefore,  for every 
$k$ such that $k < j < i$, the term $l_{ik} l_{jk}$ is zero.
This implies that 
$(ll^\T)_{ij}= l_{ij}l_{jj} = 0 = (\pi_{G}(ll^\T))_{ij}$ by \eqref{eq:ll_chord} and the assumption that $\{i,j\} \not \in E$. 
We conclude that, in fact, $l\star l^\T = ll^\T$ holds for  $l \in \LT_{+}(G)$.

This tells us that $\stdCone(\alg(G))^* = \{ll^\T \mid l \in \LT_{+}(G)\} \subseteq \psdG{G}$. 
The converse and a summary of the discussion so far are given in the next theorem. 
We note that items~$(i)$ and $(iii)$ below also follow from the \cite[Theorem~3.1]{TV23}.

\begin{theorem}[T-algebra structure of homogeneous chordal cones]\label{theo:talg_ch}
Let $G = (V,E)$ be a homogeneous chordal cone where the vertices are ordered following a trivially perfect elimination ordering. 
Then $\alg(G)$ is a T-algebra satisfying
\[
\stdCone(\alg(G))^* = \{ll^\T \mid l \in \LT_{+}(G)\} = \psdG{G}
\]
and the following properties
\begin{enumerate}[$(i)$]
	\item $l\star l^\T = ll^\T$, for all $l \in \LT(G)$.
	\item\label{utstar}   $u\star t = ut$, for all $u,t \in \UT(G)$.
	\item \label{ltstar} $l\star t = lt$, for all $l,t \in \LT(G)$.
\end{enumerate}
\end{theorem}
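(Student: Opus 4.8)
The T\nobreakdash-algebra structure of $\alg(G)$ has already been verified above, so the plan is to assemble the remaining claims from that discussion together with the short arguments below, in the order: items~$(ii)$ and~$(iii)$, then item~$(i)$, then the displayed chain of equalities.

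For item~$(ii)$ I would show that the projection $\pi_G$ in the definition of $\star$ is vacuous on a product of two elements of $\UT(G)$. Since such a product $ut$ is upper triangular, it suffices to check $(ut)_{ij}=0$ whenever $i<j$ and $\{i,j\}\notin E$. In $(ut)_{ij}=\sum_k u_{ik}t_{kj}$ a nonzero summand forces $i\le k\le j$, with $k=i$ or $\{i,k\}\in E$, and $k=j$ or $\{k,j\}\in E$; the cases $k=i$ and $k=j$ would give $t_{ij}\neq 0$ or $u_{ij}\neq 0$, contradicting $\{i,j\}\notin E$, while $i<k<j$ with $\{i,k\},\{k,j\}\in E$ forces $\{i,j\}\in E$ by property~\ref{el2}, again a contradiction. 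Hence $ut\in\alg(G)$ and $u\star t=\pi_G(ut)=ut$. Item~$(iii)$ then follows by applying the involution: $\pi_G$ commutes with transposition (as $G$ is undirected), and $(l\star t)^\T=t^\T\star l^\T$ with $t^\T,l^\T\in\UT(G)$, so $(l\star t)^\T=t^\T l^\T=(lt)^\T$, i.e.\ $l\star t=lt$. Item~$(i)$ is already essentially proved: the computation around~\eqref{eq:ll_chord} shows $(ll^\T)_{ij}=0$ for $i\neq j$ with $\{i,j\}\notin E$, it uses only property~\ref{el1} and never the sign of the diagonal of $l$, so it holds for every $l\in\LT(G)$ and gives $l\star l^\T=\pi_G(ll^\T)=ll^\T$.

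For the chain of equalities, the first one is~\eqref{eq:dual_cone} specialized to $\alg=\alg(G)$: $\stdCone(\alg(G))^*=\{t^\T\star t\mid t\in\UT_+(G)\}$, and substituting $l\coloneqq t^\T$, which ranges over $\LT_+(G)=\UT_+(G)^\T$, together with item~$(i)$ yields $\stdCone(\alg(G))^*=\{ll^\T\mid l\in\LT_+(G)\}$. The inclusion $\{ll^\T\mid l\in\LT_+(G)\}\subseteq\psdG{G}$ is immediate, since for $l\in\LT_+(G)$ the matrix $ll^\T$ is positive semidefinite and lies in $\alg(G)=\mathcal{S}(G)$ by item~$(i)$. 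For the reverse inclusion I would use that $\{ll^\T\mid l\in\LT_+(G)\}=\stdCone(\alg(G))^*$ is a closed convex cone, so it suffices to show it contains a set of generators of $\psdG{G}$. Since $G$ is chordal, $\psdG{G}$ is generated by the rank-one matrices $vv^\T$ it contains (\cite[Theorem~2.3]{AHM88}), and such a $v$ necessarily has $\supp(v)$ a clique of $G$ (otherwise some off-pattern entry $v_iv_j$ would be nonzero). Replacing $v$ by $-v$ if necessary so that $v_{j_0}\ge 0$, where $j_0\coloneqq\min\supp(v)$, let $l$ be the lower triangular matrix whose only nonzero column is the $j_0$-th, taken equal to $v$ (it is lower triangular because $v_i=0$ for $i<j_0$). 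Then $l\in\LT_+(G)$---its only nonzero diagonal entry $v_{j_0}$ is nonnegative and its support lies inside the clique $\supp(v)$---and $ll^\T=vv^\T$. Hence every generator of $\psdG{G}$, and therefore $\psdG{G}$ itself, lies in $\{ll^\T\mid l\in\LT_+(G)\}$, completing the chain.

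The only genuinely substantive step is this last inclusion $\psdG{G}\subseteq\{ll^\T\mid l\in\LT_+(G)\}$; everything else is either recorded in the text preceding the theorem or a one-line index computation with~\ref{el1}/\ref{el2}. I expect that reducing the inclusion to the single-clique case via the rank-one decomposition is the cleanest route, the one point requiring care being that $\{ll^\T\mid l\in\LT_+(G)\}$ is genuinely convex, which is why I would first identify it with the dual cone $\stdCone(\alg(G))^*$ before passing to conic combinations. An alternative is to invoke the classical zero fill-in Cholesky factorization of positive definite $G$-patterned matrices (a one-column induction using~\ref{el1}) and then reach the boundary by a limiting argument using the boundedness $\|l\|^2=\tr(ll^\T)=\tr(x)$; this is more elementary but longer.
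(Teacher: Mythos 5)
Your proposal is correct and follows essentially the same route as the paper: the same index computation with \ref{el2} for item~$(ii)$, adjoints for item~$(iii)$, the observation that the computation around \eqref{eq:ll_chord} needs only \ref{el1} for item~$(i)$, and the reduction of $\psdG{G}\subseteq\{ll^\T\mid l\in\LT_+(G)\}$ to rank-one generators supported on cliques via \cite[Theorem~2.3]{AHM88}, using the identification with the convex cone $\stdCone(\alg(G))^*$ to pass from extreme rays to the whole cone.
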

\begin{proof}
It remains to verify the inclusion 
$\psdG{G} \subseteq \{ll^\T \mid l \in \LT_{+}(G)\}$ and items~\ref{utstar} and \ref{ltstar}.
We start with the former. 
The extreme rays of $\psdG{G}$ are generated by rank one matrices, e.g., \cite[Theorem~2.3]{AHM88}.
So, let $v \in M^{n\times 1}$ (i.e., a column vector) be such that $vv^\T$ generates an extreme ray of $\psdG{G}$. Letting 
$J \coloneqq \{i \mid v_{i} \neq 0\}$, we observe  that for every pair $i\neq j$, $\{i,j\} \subseteq J$, we must have $\{i,j\} \in E$, since $(vv^\T)_{ij} \neq 0$.
Let $i$ be the smallest element of $J$.
Replacing $v$ with $-v$ if necessary, we may
assume that $v_i$ is positive.
Then, if we let $l$ be the lower triangular matrix that has $v$ as it is $i$-column and is zero elsewhere, we have $l \in \LT_{+}(G)$, since $l_{ji} = v_{j}$ and $\{j,i\} \in E$ whenever $v_j \neq 0$ and $i \neq j$.
We also have $ll^\T = vv^\T$ which shows that
$\{ll^\T \mid l \in \LT_{+}(G)\}$ contains all the extreme rays of $\psdG{G}$ and leads to the required inclusion.

For item~\ref{utstar}, let $u,t \in \UT(G)$. We have that $ut$ is also  upper triangular, so $(ut)_{ij} = 0$ if $i > j$. 
For  $i < j$ with $\{i,j\} \not \in E$, we have
\[
(ut)_{ij} = u_{ii}t_{ij} + u_{ij}t_{jj}+\sum _{k=i+1}^{j-1} u_{ik}t_{kj} = \sum _{k=i+1}^{j-1} u_{ik}t_{kj},
\]
since $t_{ij} = u_{ij} = 0$. 
The indices of terms in the summation are such that $i < k < j$ holds, so if $u_{ik}$ and $t_{kj}$ are nonzero we would have $\{i,j\} \in E$ by \ref{el2}, a contradiction.
Therefore, $(ut)_{ij} = 0$  for $i\neq j$, $\{i,j\} \not \in E$, i.e., $u\star t = \pi_G(ut) = ut$.
This proves item~\ref{utstar}.
Item~\ref{ltstar} then follows from item~\ref{utstar} by taking adjoints.
\end{proof}
We note that we can also realize $\psdG{G}$ as a ``primal'' homogeneous cone by constructing the  T-Algebra $\algD(G)$ that is dual to $\alg(G)$ as in \eqref{eq:algD}. 
From the ordering induced by $\algD(G)$, matrices in $\psdG{G}$ have upper triangular Cholesky decompositions.



\medskip

Principal subalgebras of $\alg(G)$ are obtained by zeroing out some rows and columns of $\alg(G)$. 
This corresponds to removing vertices of the underlying homogeneous chordal graph, an operation that in fact preserves homogeneous chordality.
For $H = (\hat V, \hat E)$ an induced subgraph of $G$, we define
\[
\stdFace(H) \coloneqq \{x \in \psdG{G} \mid x_{ii} = 0, i \not \in \hat V \}
\]
and, recalling Section~\ref{sec:dual}, we observe that this coincides with the principal 
dual face $\closure (\stdCone^D(\alg(G)_I))$  induced by $I = \{i \mid i \not \in \hat V\}$.
Also, $\stdFace(H)$ (which is a cone of $n\times n$ matrices) is linearly isomorphic to $\psdG{H}$ (which is a cone of $|\hat V| \times |\hat V|$ matrices).
 Theorems \ref{th:hface} and \ref{th:hface_dual} then imply that every face of a homogeneous chordal cone $\psdG{G}$ is isomorphic to a homogeneous chordal cone $\psdG{H}$, where $H$ is an induced subgraph of $G$. 
In more details, we have the following theorem.

\begin{theorem}\label{theo:h_chord_faces}
Let $G = (V,E)$ be a homogeneous chordal graph and let 
$\stdFace \face \psdG{G}$. Then, $\stdFace$ is projectionally exposed and there exists an automorphism 
$Q$ of  $\psdG{G}$ that maps $\stdFace$ to $\stdFace(H)$, where $H$ is an induced subgraph of $G$.
Additionally, if the vertices of $G$ are in a trivially perfect elimination ordering the following items hold.
\begin{enumerate}[$(i)$]
	\item $Q$ can be assumed to be of the form 
	$Q = Q_l$ for some $l \in \LT_{++}(G)$. In particular 
	$Q_l(y)  = lyl^\T$ holds for $y \in \psdG{G}$.
	\item Suppose that $x \in \reInt \stdFace$ and $x = tt^\T$ holds for a proper matrix $t \in \LT_{+}(G)$.
	Then, letting $I = \{i \mid t_{ii} = 0\}$, $H$ is the subgraph obtained by removing the vertices belonging to $I$.
	Furthermore,  $l$ in the previous item can be taken to be any $l \in \UT_{++}(G)$ satisfying $lt = e_I$, where 
	$e_I$ is the diagonal matrix that has $1$ in its $(i,i)$ component if $i \not \in I$ and zero elsewhere.
\end{enumerate}
\end{theorem}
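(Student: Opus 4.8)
The plan is to read off the whole statement from Theorem~\ref{th:hface_dual} via the T-algebra identification in Theorem~\ref{theo:talg_ch}. The first point needing care is that the opening sentence does \emph{not} assume the ordering hypothesis. Relabelling the vertices of $G$ conjugates $\psdG{G}$ by a permutation matrix, which is a linear isomorphism carrying $\psdG{G}$ onto $\psdG{G'}$ for the relabelled graph $G'$, carrying $\stdFace$ onto a face of $\psdG{G'}$, carrying each $\stdFace(H)$ onto $\stdFace(H')$ for the correspondingly relabelled induced subgraph, and conjugating $\Aut(\psdG{G})$ onto $\Aut(\psdG{G'})$; projectional exposedness of a face is likewise preserved by such a conjugation. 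Since a homogeneous chordal graph is a chordal cograph and hence admits a trivially perfect elimination ordering, I may therefore assume from the outset that the vertices of $G$ are so ordered. Then Theorem~\ref{theo:talg_ch} makes $\alg(G)$ a T-algebra with $\psdG{G} = \stdCone(\alg(G))^{*} = \stdHC^{*}$, where $\stdHC = \stdCone(\alg(G))$, so that Section~\ref{sec:dual} and Theorem~\ref{th:hface_dual} become available.

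I would then feed $\stdFace$ into Theorem~\ref{th:hface_dual}. The case $\stdFace = \psdG{G}$ is trivial (take $Q$ the identity, $H = G$; the identity is a projection). Otherwise $\stdFace$ is a proper face of $\stdHC^{*}$, so item $(i)$ of Theorem~\ref{th:hface_dual} already gives projectional exposedness, and item $(ii)$ gives a triangular automorphism $Q_{l}$ of $\stdHC^{*}$, with $l \in \LT_{++}(G)$, such that $Q_{l}(\stdFace) = \closure(\stdCone^{D}(\alg(G)_{I}))$ for a nonempty $I \subseteq \{1,\dots,n\}$. By the discussion preceding the present theorem, $\closure(\stdCone^{D}(\alg(G)_{I}))$ equals $\stdFace(H)$, where $H$ is the induced subgraph of $G$ on the vertices outside $I$, and deleting vertices preserves being a chordal cograph, so $H$ is again homogeneous chordal. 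With $Q = Q_{l}$ this settles the opening sentence and the first half of item $(i)$.

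To obtain the explicit action $Q_{l}(y) = l y l^{\T}$ in item $(i)$, I would write an arbitrary $y \in \psdG{G}$ as $y = t \star t^{\T}$ with $t \in \LT_{+}(G)$ — possible since $\psdG{G} = \{t \star t^{\T} \mid t \in \LT_{+}(G)\}$ by Theorem~\ref{theo:talg_ch} — which by item $(i)$ of that theorem is just $y = t t^{\T}$. The dual counterpart of \eqref{eq:qut2} recorded in Section~\ref{sec:dual} gives $Q_{l}(y) = (l \star t)(l \star t)^{\T}$, and since $l, t \in \LT(G)$, items $(i)$ and $(iii)$ of Theorem~\ref{theo:talg_ch} rewrite this as $(l t)(l t)^{\T} = l\, t t^{\T}\, l^{\T} = l y l^{\T}$; the trivially perfect ordering is precisely what keeps the projection $\pi_{G}$ inert in all these products. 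Hence $Q_{l}$ acts on $\psdG{G}$ by $y \mapsto l y l^{\T}$.

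Finally, for item $(ii)$, take $x \in \reInt \stdFace$ with $x = t t^{\T}$ for a proper $t \in \LT_{+}(G)$. In $\alg(G)$ each isomorphism $\rho_{i}$ merely reads off the $(i,i)$ entry, so the index set $I = \{i \mid \rho_{i}(t_{ii}) = 0\}$ supplied by Theorem~\ref{th:hface_dual}$(ii)$ is, in the entry notation of this section, exactly $\{i \mid t_{ii} = 0\}$; the same theorem identifies $Q_{l}(\stdFace)$ with $\closure(\stdCone^{D}(\alg(G)_{I})) = \stdFace(H)$ for $H = G$ with the vertices of $I$ deleted, and the dual analogue of \eqref{eq:utei} (see the discussion after Theorem~\ref{th:hface_dual}) lets one take for $l$ any triangular solution of $l t = e_{I}$, with $e_{I}$ the diagonal matrix equal to $1$ at $(i,i)$ for $i \notin I$ and $0$ elsewhere. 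I do not expect any genuinely hard step here; the only real work is bookkeeping — keeping the relabelled bigradation of the dual T-algebra $\algD(G)$ straight, and checking at each product that the trivially perfect ordering removes $\pi_{G}$, so that the abstract quadratic map $Q_{l}$ literally becomes the congruence $y \mapsto l y l^{\T}$.
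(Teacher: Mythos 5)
Your proposal is correct and follows essentially the same route as the paper's proof: reduce to a trivially perfect elimination ordering by permuting rows and columns, invoke Theorem~\ref{th:hface_dual} through the T-algebra structure of Theorem~\ref{theo:talg_ch}, identify principal dual faces with the faces $\stdFace(H)$ coming from induced subgraphs, and use items $(i)$ and $(iii)$ of Theorem~\ref{theo:talg_ch} together with the dual version of \eqref{eq:qut2} to turn $Q_l$ into the congruence $y \mapsto lyl^\T$. Your explicit handling of the trivial case $\stdFace = \psdG{G}$ (which Theorem~\ref{th:hface_dual} does not cover, being stated for proper faces) is a small point the paper leaves implicit.
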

\begin{proof}
Suppose that $G$ follows a trivially perfect elimination ordering.
By Theorem~\ref{theo:talg_ch}, $\alg(G)$ is a T-algebra and Theorem~\ref{th:hface_dual} tells us that $\stdFace$ is projectionally exposed and there is a triangular automorphism $Q_l$ of $\stdCone(\alg(G))^*$ that maps 
$\stdFace$ to a principal dual face of $\psdG{G} = \stdCone(\alg(G))^*$. Principal dual faces are obtaining by zeroing out certain rows and columns, so each principal dual face corresponds to an induced subgraph $H$ of $G$.
Finally,  Theorem~\ref{theo:talg_ch} together with the dual version of \eqref{eq:qut2} imply  that for $y = tt^\T \in \psdG{G}$, where $t \in \LT_+(G)$, we have
$Q_l(t\star t^\T) = Q_{l}(tt^\T) = (lt)(lt)^\T = ltt^\T l^\T = lyl^\T$, which proves item~$(i)$.

Item~$(ii)$ follows from  Theorem~\ref{th:hface_dual} and 
the observation that, as discussed previously, $\stdFace(H)$ coincides with the principal dual face obtained by removing the vertices in $I$.
The fact that $l$ can be taken to be as in the statement
follows from the discussion  around \eqref{eq:utei} applied to the dual algebra and the existence of $l$ follows from  Lemma~\ref{lem:qei}.

Finally, if $G$ is in any other ordering, then $\psdG{G}$ is linearly isomorphic to some $\psdG{G'}$ where $G'$ is just a relabelling of $G$ and is in a trivially perfect elimination ordering. Such a linear isomorphism is obtained by permuting rows and columns, so  $\psdG{G}$ still has the intended properties described in the statement.
\end{proof}
We move on to an example and, after that, we will observe that a part of Theorem~\ref{theo:h_chord_faces} may fail for chordal graphs that are not homogeneous.

\paragraph{An example}
Let $G$ be the following graph.
\begin{center}
	\begin{tikzpicture}[main/.style = {draw, circle}] 
	\node[main] (1) {$3$}; 
	\node[main] (2) [below left of =1]{$1$}; 
	\node[main] (3) [below right of =1]{$2$}; 
	\draw (1) -- (2);
	\draw (1) -- (3);
	\end{tikzpicture}
\end{center}
We note that the vertices follow a trivially perfect elimination ordering and that would not be the case if, say, the labels of $3$ and $2$ were exchanged.
With that, we have
\[
\psdG{G} = \left\{\begin{pmatrix}
x_{11} & 0 & x_{13}\\
0 & x_{22} & x_{23}\\
x_{13} & x_{23}   & x_{33}\\
\end{pmatrix} \in \mathcal{S}(G) \mid \begin{pmatrix}
x_{11} & 0 & x_{13}\\
0 & x_{22} & x_{23}\\
x_{13} & x_{23}   & x_{33}\\
\end{pmatrix}  \succeq 0 \right\}.
\]
The nonzero proper principal faces $\psdG{G}$ are the ones obtained from the following  $6$ induced subgraphs.
\begin{center}
	\begin{tikzpicture}[main/.style = {draw, circle}] 
	\draw[ very thick] (-0.75,-0.75) rectangle (0.75,0.75);
	\node[main] (1) {$1$}; 
	\end{tikzpicture}\qquad	
	\begin{tikzpicture}[main/.style = {draw, circle}] 
		\draw[ very thick] (-0.75,-0.75) rectangle (0.75,0.75);
	\node[main] (1) {$2$}; 
	\end{tikzpicture}\qquad	
		\begin{tikzpicture}[main/.style = {draw, circle}] 
	\draw[ very thick] (-0.75,-0.75) rectangle (0.75,0.75);
	\node[main] (1) {$3$};
	\end{tikzpicture}\qquad	
	\begin{tikzpicture}[main/.style = {draw, circle}] 
	\draw[ very thick] (-1.25,-1.25) rectangle (1.25,0);
	\node[main] (2) [below left of =1]{$1$}; 
	\node[main] (3) [below right of =1]{$2$}; 
	\end{tikzpicture}\qquad 
	\begin{tikzpicture}[main/.style = {draw, circle}] 
	\draw[ very thick] (-1.25,-1.25) rectangle (0.5,0.5);
	\node[main] (1) {$3$}; 
	\node[main] (2) [below left of =1]{$1$}; 
	\draw (1) -- (2);
	\end{tikzpicture}\qquad
		\begin{tikzpicture}[main/.style = {draw, circle}]
		\draw[ very thick] (-0.5,-1.25) rectangle (1.25,0.5); 
	\node[main] (1) {$3$}; 
	\node[main] (3) [below right of =1]{$2$}; 
	\draw (1) -- (3);
	\end{tikzpicture}
\end{center}
However, since $\psdG{G}$ is not polyhedral, it has infinitely many faces that are not subfaces of the cones obtained by those induced subgraphs. 
For example, let $x \in \psdG{G}$ be such that
\[x=\begin{pmatrix}
1 & 0 & 1\\
0 & 1 & 1\\
1 & 1   & 2\\
\end{pmatrix}
\]
and let us consider the problem of determining $\stdFace = \minFace(x, \psdG{G})$, i.e., the minimal face  of $\psdG{G}$ containing $x$. Following Algorithm~\ref{alg:cholesky_dual} we let
\[
t \coloneqq 
\begin{pmatrix}
1 & 0 & 0\\
0 & 1 & 0\\
1& 1   & 0\\
\end{pmatrix}
\]
and we may verify that, indeed, $x = tt^\T$ holds.
We also have $I = \{i \mid t_{ii} = 0\} = \{3\}$, so 
according to Theorem~\ref{theo:h_chord_faces}, $\stdFace$ is isomorphic to $\stdFace(H)$, where $H$ is the induced subgraph obtained by removing the vertex with label $3$. 
This automorphism can be obtained by  finding $l \in \LT_{++}(G)$ such that 
\[
lt = e_{I},\text{ where } e_{I} = \begin{pmatrix}
1 & 0 & 0\\
0 & 1 & 0\\
0 & 0   & 0\\
\end{pmatrix}.
\]
One possible solution is 
\[
l = \begin{pmatrix}
1 & 0 & 0\\
0 & 1 & 0\\
-1 & -1  & 1\\
\end{pmatrix}.
\]
Then, $Q_l$ is an automorphism of $\psdG{G}$ mapping $x$ to 
$(lt)(lt)^\T = e_I e_I^* = e_I$. Therefore, 
$Q_l (\stdFace) = Q_l(\minFace(x,\psdG{G})) = \minFace(e_I,\psdG{G}) = \stdFace(H)$ is the principal dual face obtaining by considering the subgraph induced by $\{1,2\}$, i.e., zeroing out the third row and column of elements of $\psdG{G}$.

This leads to the following explicit description of $\stdFace$.
\begin{equation}\label{eq:psdgf}
\begin{aligned}
\stdFace &= \{x \in \psdG{G} \mid (Q_l(x))_{33} = 0 \} = 
\{x \in \psdG{G} \mid (lxl^\T)_{33} = 0 \}\\
& = \{x \in \psdG{G} \mid x_{11} - 2x_{31} + x_{22} - 2x_{32} + x_{33} = 0 \}.
\end{aligned}
\end{equation}

The face $\stdFace$ is also projectionally exposed and following the proof of item~$(i)$ of Theorem~\ref{th:hface},
$\hat l = l^{-1}e_I l$ is a lower triangular matrix such that $Q_{\hat l}$ maps $\psdG{G}$ onto $\stdFace$ and fixes $\stdFace$. We also note that $Q_{\hat l}$ is not self-adjoint.

\begin{remark}[Failure of Theorem~\ref{theo:h_chord_faces} for chordal graphs]\label{rem:failure}
	Let $G$ be the following graph.
	\begin{center}
		\begin{tikzpicture}[main/.style = {draw, circle}] 
		\node[main] (1) {$1$}; 
		\node[main] (2) [right of =1]{$2$}; 
		\node[main] (3) [right of =2]{$3$}; 
		\node[main] (4) [right of =3]{$4$}; 
		\draw (1) -- (2);
		\draw (2) -- (3);
		\draw (3) -- (4);
		\end{tikzpicture}
	\end{center}
	The graph $G$ is chordal but not homogeneous.
	We consider the cone $\psdG{G}$ and we will show that 
	it has a face  linearly isomorphic to $\RR^3_+$.
Let $v \coloneqq (1,1,1,1) \in \RR^4$ and consider the 
	rank~$1$ matrix $vv^\T $, which belongs to $(\psdG{G})^*$.
	Let $\stdFace \coloneqq \psdG{G} \cap \{vv^\T\}^\perp = \{x \in \psdG{G} \mid v \in \ker x\}$. 
	
	Every matrix that generates an extreme ray of $\psdG{G}$ must have its support contained in a maximal clique of $G$, e.g., see \cite[Theorem~2.3]{AHM88}. 
	So if $x$ is an extreme ray of $\psdG{G}$ then it has one of the following patterns of nonzero entries:
	\begin{equation*}
	\begin{pmatrix}
	x_{11} & x_{12} & 0 & 0 
	\\
	x_{12} & x_{22} & 0 & 0 
	\\
	0 & 0 & 0 & 0 
	\\
	0 & 0 & 0 & 0 
	\end{pmatrix},
	\begin{pmatrix}
	0 & 0 & 0 & 0 
	\\
	0 & x_{22} & x_{23} & 0 
	\\
	0 & x_{23} & x_{33} & 0 
	\\
	0 & 0 & 0 & 0 
	\end{pmatrix},
	\begin{pmatrix}
	0 & 0 & 0 & 0 
	\\
	0 & 0 & 0 & 0 
	\\
	0 & 0 & x_{33} & x_{34} 
	\\
	0 & 0 & x_{34} & x_{44} 
	\end{pmatrix}.
	\end{equation*}
	If, in addition, $x$ is an extreme ray of $\stdFace$ then $v$ is in the kernel of $x$ which implies that $x$ must be a positive multiple of one of the following matrices
	\begin{equation*}
	\begin{pmatrix}
	1 & -1 & 0 & 0 
	\\
	-1 & 1 & 0 & 0 
	\\
	0 & 0 & 0 & 0 
	\\
	0 & 0 & 0 & 0 
	\end{pmatrix},
	\begin{pmatrix}
	0 & 0 & 0 & 0 
	\\
	0 & 1 & -1 & 0 
	\\
	0 & -1 & 1 & 0 
	\\
	0 & 0 & 0 & 0 
	\end{pmatrix},
	\begin{pmatrix}
	0 & 0 & 0 & 0 
	\\
	0 & 0 & 0 & 0 
	\\
	0 & 0 & 1 & -1 
	\\
	0 & 0 & -1 & 1
	\end{pmatrix}.
	\end{equation*}
	Each of these three matrices is also in $\stdFace$, so we conclude that 
	$\stdFace$ has exactly three extreme rays and, is therefore, linearly isomorphic to $\RR^3_+$.
	
	The face $\stdFace$ also contains a rank~$3$ matrix (e.g., the sum of these three matrices). In order for a face associated to an induced subgraph of $G$  to contain a rank~$3$ matrix it must be the result of removing a single vertex. However, removing a single vertex of $G$ leads to a face that will, at the very least, contain a face isomorphic to $\psdcone{2}$ (corresponding to one of the remaining edges of the induced subgraph). 
	In particular, such a face is not polyhedral. 
	
	The conclusion is that no face associated to an induced subgraph of $G$ can be linearly isomorphic to $\stdFace$. In particular, $Q$ as in 
	Theorem~\ref{theo:h_chord_faces} may not exist when $G$ is only chordal.
\end{remark}

\subsection{The PSD completion side}
Let $G = (V,E)$ be a homogeneous chordal graph. 
The dual cone of $\psdG{G}$ is the cone of positive semidefinite (PSD) completable matrices that follow the sparsity pattern defined by $G$. That is, $x \in \psdG{G}^*$ if and only if $x \in \pi_G(\psdcone{n})$.
A PSD completion of $x$ is any $w \in \psdcone{n}$ satisfying $x = \pi_G(w)$.
While there is an extensive literature on completion problems, there seems to be few works that address the actual facial structure of $\pi_G(\psdcone{n})$.

Now, suppose that the vertices are ordered according to a trivially perfect elimination ordering. Since $\psdG{G}$ is $ \stdCone(\alg(G))^*$, the cone $\pi_G(\psdcone{n}) =\psdG{G}^*$ coincides  with $\closure \stdCone(\alg(G))$. 
Overall, we have
\[
\pi_G(\psdcone{n}) = \{u\star u^\T \mid u \in \UT_{+}(G)\} = \{\pi_G(u u^\T )\mid u \in \UT_{+}(G)\}.
\] 
An element may have multiple PSD completions and it is of interest to obtain completions that have certain desired properties. 
For example, if there exists a positive definite completion to some $x \in \pi_G(\psdcone{n})$, we may want to obtain one that maximizes the determinant.

In the next theorem, we gather several facts about the facial structure of $\pi_G(\psdcone{n})$ together with a proof 
that Algorithm~\ref{alg:cholesky} leads to a maximum rank decomposition. 
As before, if $H = (\hat{V},\hat{E})$ is an induced subgraph of $G$, then it also corresponds to a face of $\pi_G(\psdcone{n})$ in a natural way. 
By an abuse of notation, we define 
$\pi_H(\psdcone{n}) \coloneqq \{x \in \pi_G(\psdcone{n}) \mid x_{ij} = x_{ji} = 0, \forall i,j \not \in \hat V \}$.
Put otherwise, $x \in \pi_H(\psdcone{n})$ if it has a PSD completion $w$ satisfying $w_{ii} = 0$ for $i \not \in \hat V$.


\begin{theorem}[Faces of homogeneous PSD completable cones]\label{theo:completion}
	Let $G = (V,E)$ be  a homogeneous chordal cone and let 
	$\stdFace \face \pi_G(\psdcone{n})$.
	Then, $\stdFace$ is projectionally exposed and there exists an automorphism 
	$Q$ of  $\pi_G(\psdcone{n})$ that maps $\stdFace$ to $\pi_H(\psdcone{n})$, where $H$ is an induced subgraph of $G$.
	Additionally, if the vertices of $G$ are in a trivially perfect elimination ordering the following items hold.
	
	\begin{enumerate}[$(i)$]
		\item $Q$ can be assumed to be of the form 
		$Q = Q_u$ for some $u \in \UT_{++}(G)$. In particular 
		$Q_u(y)  = \pi_G(uyu^\T )$ holds for $y \in \pi_G(\psdcone{n})$.
		\item Suppose that $x \in \reInt \stdFace$.
		Then, $x = \pi_G(tt^\T)$ holds for a unique proper matrix $t \in \UT_{+}(G)$.
		Then, letting $I = \{i \mid t_{ii} = 0\}$, $H$ is the subgraph obtained by removing the vertices belonging to $I$.
		Furthermore,  $u$ in the previous item can be taken to be any $u \in \UT_{++}(G)$ satisfying $ut = e_I$, where 
		$e_I$ is the diagonal matrix that has $1$ in its $(i,i)$ component if $i \not \in I$ and zero elsewhere.
	\end{enumerate}

	In what follows, let $x,t$ be as in item~$(ii)$ and let  $r 
	\coloneqq |\{i \mid t_{ii} \neq 0\}|$.
	\begin{enumerate}[$(i)$]
		\setcounter{enumi}{2}
		\item The maximum possible rank of a PSD completion of $x$ is $r$ and $tt^\T$ is a maximum rank completion. 
		\item If $r = n$, then $tt^{\T}$ is the maximum determinant completion.
	\end{enumerate}	
\end{theorem}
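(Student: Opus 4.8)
The plan is to read off items~$(i)$ and $(ii)$ from Theorem~\ref{th:hface} applied to the T-algebra $\alg(G)$ (legitimate by Theorem~\ref{theo:talg_ch}): using $\pi_G(\psdcone{n}) = \psdG{G}^* = \closure\stdCone(\alg(G))$ and the fact that a principal subalgebra $\alg(G)_I$ is exactly the algebra of the induced subgraph $H$ obtained by deleting the vertices in $I$ (so $\closure\stdCone(\alg(G)_I) = \pi_H(\psdcone{n})$), Theorem~\ref{th:hface} supplies projectional exposedness, the triangular automorphism $Q_u$ with $u\in\UT_{++}(G)$, the proper factor $t\in\UT_+(G)$ with $x = \pi_G(tt^\T) = t\star t^\T$ from Proposition~\ref{prop:chol}/Algorithm~\ref{alg:cholesky}, the index set $I = \{i : t_{ii}=0\}$, and the equation $ut = e_I$ from the discussion around \eqref{eq:utei}. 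The one nontrivial point here is the ``in particular'' identity $Q_u(y) = \pi_G(uyu^\T)$; first I would prove the lemma that $\pi_G(uNu^\T) = 0$ for every $u\in\UT(G)$ and every symmetric $N$ supported on the non-edges of $G$, by a case analysis on the elimination-ordering properties~\ref{el1} and \ref{el2} in the same spirit as the verification of Axioms~\ref{ax:6}--\ref{ax:7} inside the proof of Theorem~\ref{theo:talg_ch}. Granting the lemma, for $y = t\star t^\T$ one has $Q_u(y) = (u\star t)\star(u\star t)^\T = (ut)\star(ut)^\T = \pi_G(u(tt^\T)u^\T)$ by \eqref{eq:qut2} and item~\ref{utstar} of Theorem~\ref{theo:talg_ch}, and since $tt^\T - y$ is supported on the non-edges the lemma turns this into $\pi_G(uyu^\T)$; linearity then extends the identity to all of $\mathcal{S}(G)$ (a relabelling handles a general vertex ordering).

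For item~$(iii)$ I would establish the two inequalities separately. Upper bound: given any PSD completion $w$ of $x$, the difference $w - tt^\T$ is supported on the non-edges (both project to $x$), so the lemma gives $\pi_G(uwu^\T) = \pi_G(u(tt^\T)u^\T) = Q_u(x) = (ut)\star(ut)^\T = e_I$; thus $uwu^\T\succeq 0$ is a PSD completion of $e_I$, and since $(e_I)_{ii} = 0$ for $i\in I$ forces the $i$-th row and column of $uwu^\T$ to vanish, $\rank w = \rank(uwu^\T) \le n - |I| = r$. Attainment: since $t$ is proper and upper triangular, its columns indexed by $I$ vanish and those indexed outside $I$ are linearly independent, so $\rank(tt^\T) = \rank t = r$, and $tt^\T$ is itself a completion of $x$; hence the maximal rank is exactly $r$ and is attained by $tt^\T$.

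For item~$(iv)$, with $r = n$ we have $I = \emptyset$ and $t\in\UT_{++}(G)$. The key point is that the ordinary matrix inverse $t^{-1}$ again lies in $\UT_{++}(G)$ — this holds because $\UT_{++}(G)$ is a group under $\star$, which coincides with ordinary multiplication on triangular matrices by item~\ref{utstar} of Theorem~\ref{theo:talg_ch} (alternatively, \cite[Theorem~3.1]{TV23}) — so $t^{-1}$ has the sparsity pattern of $G$. Then $(tt^\T)^{-1} = (t^{-1})^\T t^{-1} = l\star l^\T$ with $l := (t^{-1})^\T\in\LT(G)$, and item~$(i)$ of Theorem~\ref{theo:talg_ch} puts $l\star l^\T$ in $\HE(\alg(G)) = \mathcal{S}(G)$; hence $\big((tt^\T)^{-1}\big)_{ij} = 0$ for all $i\neq j$ with $\{i,j\}\notin E$. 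As $tt^\T\succ 0$ is a PSD completion of $x$ whose inverse is supported on the diagonal and the edges of $G$, the standard optimality characterization of the maximum-determinant completion (see, e.g., \cite{VA15}) identifies $tt^\T$ as its unique maximum-determinant completion.

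I expect the main obstacle to be the combinatorial lemma $\pi_G(uNu^\T) = 0$ for $N$ supported on the non-edges: it is invoked both to get the explicit form of $Q_u$ in $(i)$ and to transport PSD completions along $Q_u$ in $(iii)$, and it is precisely where \emph{both} ordering axioms \ref{el1} and \ref{el2} — i.e.\ homogeneity rather than mere chordality — are needed, which is consistent with the failure illustrated in Remark~\ref{rem:failure}. The remaining steps are bookkeeping built on Theorems~\ref{th:hface} and \ref{theo:talg_ch}.
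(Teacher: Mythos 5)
Your proposal is correct, and for items $(i)$, $(ii)$ and $(iv)$ it follows essentially the paper's route: reduce to Theorem~\ref{th:hface} via the T-algebra $\alg(G)$, identify principal faces with induced subgraphs, and for $(iv)$ use that the ordinary inverse of $t$ keeps the sparsity pattern so that $(tt^\T)^{-1}\in\mathcal{S}(G)$ certifies max-determinant optimality. Two points of genuine difference are worth noting. First, you isolate and prove the lemma $\pi_G(uNu^\T)=0$ for $u\in\UT(G)$ and symmetric $N$ supported on off-diagonal non-edges; the paper's proof of item $(i)$ passes from $\pi_G(u(tt^\T)u^\T)$ to $\pi_G(uyu^\T)$ with $y=\pi_G(tt^\T)$ without comment, and your lemma (which does hold — the case analysis on \ref{el1}/\ref{el2} closes) is exactly what justifies that step, so this is a useful addition rather than a detour. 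Second, your upper bound in item $(iii)$ is a different argument: you transport an arbitrary PSD completion $w$ by congruence with $u$ to a PSD completion of $e_I$, whose rows and columns indexed by $I$ must vanish, giving $\rank w\le n-|I|=r$. The paper instead takes $y\in\reInt\stdFace^\Delta$, uses the rank complementarity of conjugate faces (Proposition~\ref{prop:conj_rank}) to get $\rank y=n-r$, and concludes from $\inProd{w}{y}=0$ that $\rank w\le r$. Your version is more self-contained (no appeal to Proposition~\ref{prop:conj_rank}) but leans again on the combinatorial lemma; the paper's version reuses the general facial machinery and avoids the explicit congruence computation. Both are valid.
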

\begin{proof}
	As in the proof of Theorem~\ref{theo:h_chord_faces}, changing the ordering of the vertices of $G$ amounts to permuting rows and columns, thus leading to linearly isomorphic cones. So we might as well assume 
	that $G$ follows a trivially perfect elimination ordering.
	
	Both the fact that $\stdFace$ is projectionally exposed and that there exists an automorphism mapping $Q$ to some principal faces follow from Theorem~\ref{th:hface}.
	Under a trivially perfect elimination ordering, 
	$Q = Q_u$ for some $u \in \UT_{++}(G)$.
	Then, if $y \in \pi_G(\psdcone{n})$ and 
	$y= t \star t^\T = \pi_G(tt^\T)$ for some $t \in \UT_{++}(G)$, we have $Q_u(y) = (ut) \star (ut)^\T = \pi_G(utt^\T u^\T) = \pi_G(uyu^\T)$, which proves item $(i)$.
	
	Item~$(ii)$ is  a direct consequence of
	the uniqueness of the generalized Cholesky factorization in  Proposition~\ref{prop:chol} and item~$(ii)$ of Theorem~\ref{th:hface}. 
	
	For item~$(iii)$, we start by observing that $\stdFace$ is a homogeneous cone of rank $r$, also by item~$(ii)$ of Theorem~\ref{th:hface}.  
	Then, $\stdFace^\Delta$ is a  homogeneous cone of rank $n-r$, by Proposition~\ref{prop:conj_rank}. 
	Let $y \in \reInt \stdFace^\Delta$ and write 
	$y = ll^\T$, where $l$ is a proper lower triangular matrix in $\LT_{+}(G)$.
	By Theorem~\ref{th:hface_dual}, since $\stdFace^\Delta$ has rank $n-r$, the diagonal of $l$ has exactly $n-r$ positive elements, so $y$ has rank $n-r$.
	Let $w$ be a PSD completion of $x$, then
	\[
	0 = \inProd{y}{x} = \inProd{y}{\pi_G(w)}= \inProd{\pi_G(y)}{w} = \inProd{y}{w},
	\]
	where the last equality follows from $y \in \psdG{G}$. This immediately implies that the rank of $w$ is at most $r$. 
	Analogously, the diagonal of $t$ has exactly $r$ positive elements, so $tt^\T$ has rank $r$ and it is a maximum rank completion of $x$.
	
	For item~{$(iv)$}, since $r = n$, letting $l \coloneqq t^\T$, we note  
	that $l \in \LT_{++}(G)$. 
	Let  $\bar y \coloneqq l^{-1}(l^{-1})^\T$
	and $\bar w \coloneqq tt^\T$.
	In our T-algebra context, 
	$l^{-1}$ is the element in $\LT_{++}(G)$ satisfying 
	$l^{-1} \star l = e$, where $e$ is the identity matrix, see Section~\ref{sec:hom} and the discussion after \eqref{eq:qadj}. However, in view of item~\ref{ltstar} of Theorem~\ref{theo:talg_ch}, $l^{-1} \star l = l^{-1}l= e$ holds, so $l^{-1}$ is, in fact, the usual matrix inverse. 
	Overall, we have  $(tt^\T)^{-1} = l^{-1}(l^{-1})^{\T}$, i.e., $\bar w^{-1} = \bar{y}$ holds in the usual linear algebraic sense.
	Also, $\bar{y} \in \reInt \stdCone(\alg(G))^* = \reInt \psdG{G} $ holds, since the diagonal of $l^{-1}$ is positive.

	We consider the following primal dual pair of problems.
	\begin{equation*}
	\min _{w \in \psdcone{n}, \pi_G(w)=x} -\log \det (w) \qquad \max_{y \in \psdG{G}} \inProd{x}{y} + \log \det (y) + r
	\end{equation*}
	The optimality conditions are
	$\pi_G(w)=x$, $w \in \reInt \psdcone{n}$, 
	$w^{-1} = y$, $y \in \reInt \psdG{G}$. Therefore, $\bar y$ and $\bar w$ are optimal solutions and $\bar w$ is the maximum determinant completion.
\end{proof}
We remark that an analogous result to item~$(iv)$ was proved before in \cite[Appendix~B.4]{TV23}.
Also, Theorem~\ref{theo:completion} implies that $x$ generates an extreme ray of $\pi_{G}(\psdcone{n})$ if and only if the maximum possible rank of its PSD completions is one. 
As it may be of independent interest, we will check that this property  holds when $G$ is only chordal.

\begin{proposition}\label{prop:chord_complet}
	Let $G = (V,E)$ be a chordal graph, $x \in \pi_G(\psdcone{n})$ be nonzero and denote by $r$ the maximum possible rank of a PSD completion of $x$.
	Then, $x$ generates an extreme ray if and only if $r=1$.
\end{proposition}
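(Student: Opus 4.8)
The plan is to prove both directions by leveraging the rank-one generation property of chordal PSD cones. The key structural input is that if $G$ is chordal, then $\pi_G(\psdcone{n})$ has a clean description of its extreme rays via rank-one matrices supported on cliques — more precisely, a dual statement: a nonzero $x$ generates an extreme ray of $\pi_G(\psdcone{n})$ if and only if $\minFace(x,\pi_G(\psdcone{n}))$ is one-dimensional. I would first handle the easy direction: suppose $x$ has a rank-one PSD completion $w = vv^\T$ with $\pi_G(w) = x$, and that $r = 1$ (so \emph{every} PSD completion has rank at most one). The aim is to show $\minFace(x,\pi_G(\psdcone{n}))$ is a half-line. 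If $x = x_1 + x_2$ with $x_1, x_2 \in \pi_G(\psdcone{n})$, pick PSD completions $w_1, w_2$ of $x_1, x_2$; then $w_1 + w_2$ is a PSD completion of $x$, hence has rank at most $1$ by hypothesis, which forces $w_1, w_2$ to be nonnegative multiples of the same rank-one matrix $vv^\T$; projecting, $x_1, x_2$ are nonnegative multiples of $x$. This shows $x$ generates an extreme ray.

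For the converse — $x$ generates an extreme ray $\Rightarrow r = 1$ — I would argue by contraposition or directly. Suppose $x$ admits a PSD completion $w$ of rank $r \geq 2$. The goal is to split $x$ nontrivially. Write $w = \sum_{k=1}^r v_k v_k^\T$ as a sum of rank-one PSD matrices (an orthogonal eigendecomposition, say). Each $\pi_G(v_k v_k^\T) \in \pi_G(\psdcone{n})$, and $x = \sum_k \pi_G(v_k v_k^\T)$. If this decomposition were a trivial one — i.e., all $\pi_G(v_k v_k^\T)$ proportional to $x$ — one would need to derive a contradiction with $r \geq 2$; but that is not automatic, since distinct $v_k$ can still project to proportional matrices in degenerate situations. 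So instead I would invoke the \emph{maximum rank} hypothesis more carefully: since $r$ is the \emph{maximum} rank over all completions, and $w$ achieves it, a standard genericity/convexity argument shows that $w$ lies in the relative interior of the fiber $\{w' \in \psdcone{n} : \pi_G(w') = x\}$; equivalently $w$ is a maximum-rank element of this affine slice of $\psdcone{n}$. Then the facial structure of $\psdcone{n}$ applied to this slice (the slice's minimal face of $\psdcone{n}$ is $\{a \in \psdcone{n} : \ker a \supseteq \ker w\}$, isomorphic to $\psdcone{r}$) lets us decompose $w$ within that face in at least two genuinely different ways when $r \geq 2$, and each decomposition descends through $\pi_G$ to a decomposition of $x$. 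One must then rule out that these descend to proportional pieces; this is where chordality is used, via the fact that $\pi_G$ restricted to the face $\{a : \ker a \supseteq \ker w\}$ is injective enough — concretely, if $x$ generated an extreme ray then by the easy direction's logic $x$ would have a rank-one completion, and then \cite[Theorem~2.3]{AHM88}-type arguments pin down that all completions have rank one, contradicting $r \geq 2$.

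Rethinking, the cleanest route for the converse is: assume $x$ generates an extreme ray, and let $w$ be \emph{any} PSD completion; I claim $\rank w = 1$. Take an eigendecomposition $w = \sum_k v_k v_k^\T$. Then $x = \sum_k \pi_G(v_k v_k^\T)$ with each summand in $\pi_G(\psdcone{n})$. Since $x$ generates an extreme ray and all summands are nonzero (if $\pi_G(v_k v_k^\T) = 0$ then $v_k = 0$ because the diagonal entries of $v_k v_k^\T$ are preserved by $\pi_G$), each $\pi_G(v_k v_k^\T) = \lambda_k x$ for some $\lambda_k > 0$ with $\sum_k \lambda_k = 1$. In particular all the $v_k v_k^\T$ have the same diagonal (up to scaling), and the off-diagonal entries that $\pi_G$ keeps are also all proportional. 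It remains to show this forces $r = 1$. Here is the crux: one picks any maximal clique $C$ of $G$; the principal submatrix $w[C]$ equals $\sum_k (v_k)_C (v_k)_C^\T$ and is fully determined by $x$ (all entries of $w$ within a clique are specified by $x$). Since $x$ is extreme, $w[C]$ is (a scalar times) a fixed rank-one matrix, so each $(v_k)_C$ is proportional to a fixed vector $u_C$. Now using the running-intersection/clique-tree structure of the chordal graph $G$, one glues these rank-one restrictions across overlapping cliques: because consecutive cliques in a clique tree share vertices and the shared restriction is rank one, the vectors $v_k$ are globally proportional to a single vector $v$, hence $w = v v^\T$ is rank one and $r = 1$.

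I expect the main obstacle to be the gluing step in the converse: showing that rank-one restrictions to all maximal cliques, agreeing on overlaps up to scaling, force the global vector $v_k$ to be rank-one (i.e., proportional to a single $v$). This is exactly where chordality (existence of a clique tree with the running intersection property) is essential and must be spelled out, and it is the part that would fail for non-chordal $G$ (e.g., a $4$-cycle). The rest — the extreme-ray splitting arguments and the observation that $\pi_G$ preserves diagonals and hence does not kill nonzero PSD matrices — is routine.
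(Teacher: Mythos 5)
Your forward implication ($r=1\Rightarrow$ extreme ray) is exactly the paper's argument: sum PSD completions of the two summands and use that a rank-one PSD matrix splits only trivially. The converse is where you genuinely diverge. The paper works on the dual side: it takes $y\in\reInt(\psdG{G}\cap\{x\}^\perp)$, notes this conjugate face is a maximal proper face of $\psdG{G}$ (Lemma~\ref{lem:conj}), and uses chordality only through the fact that extreme rays of $\psdG{G}$ are rank one to conclude that $y$ has rank $n-1$ (adding any extreme ray outside the face gives a positive definite matrix). Since every completion $w$ satisfies $\inProd{w}{y}=0$, one gets $\ker w\supseteq \operatorname{im} y$ and hence $\rank w\le 1$. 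No clique tree, no gluing --- this complementarity argument is shorter and is worth knowing.

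Your primal clique-tree route can be completed, but as written it has a hole exactly where you predicted. The assertion that ``the shared restriction is rank one'' on a separator $S=C_1\cap C_2$ is not automatic: the vector $u_{C_1}$ can vanish on $S$ even when $x[C_1]$ and $x[C_2]$ are both nonzero, and then the signs and scalings of $(v_k)_{C_1}$ and $(v_k)_{C_2}$ decouple across $k$, so the gluing fails. (Concretely, on the path $1$--$2$--$3$ with $x=\mathrm{diag}(1,0,1)$, the completions $(e_1\pm e_3)(e_1\pm e_3)^\T$ have rank-one clique restrictions that do not glue --- of course that $x$ is not extreme, which is the point.) To close the gap you must invoke extremality once more: if a rank-one piece $v$ vanishes on a separator, write $v=v'+v''$ with $v'$, $v''$ supported on the two sides of the separator; since no edge of $G$ crosses it, $\pi_G(vv^\T)=\pi_G(v'v'^{\T})+\pi_G(v''v''^{\T})$ is a decomposition of a positive multiple of $x$ into summands with disjoint supports, contradicting extremality unless one side is zero. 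With that case disposed of, the induction along the clique tree goes through and the argument is correct. So: right strategy in both directions, a genuinely different and more laborious converse than the paper's, with one concrete step (vanishing separator restrictions) that must be argued rather than assumed.
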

\begin{proof}
	First, suppose that $x$ generates an extreme ray.
	Let $\stdFace \coloneqq \minFace(x, \pi_G(\psdcone{n}))$, i.e., the minimal face of $\pi_G(\psdcone{n})$ containing $x$.
	Let $\stdFace^\Delta = \psdG{G}\cap \{x\}^\perp$ be the conjugate face and $y \in \reInt \stdFace^\Delta$. 
	Then, if $w$ is a PSD completion of $x$ we have
	\[
	0 = \inProd{x}{y} = \inProd{\pi_G(w)}{y} = \inProd{w}{\pi_G(y)} = \inProd{w}{y}.
	\]
	Because $w$ and $y$ are PSD matrices, we have $wy = yw = 0$ and, in particular, $\ker w \supseteq \text{im}\, y$.
	Also $w \neq 0$, since $x \neq 0$.

	Since $x$ generates an extreme ray, $\stdFace^\Delta$ is a maximum proper face of $\psdG{G}$ by Lemma~\ref{lem:conj}.
	Let $y' \in \psdG{G}$ be such that it generates an extreme ray that is not in $\stdFace^\Delta $. Since $\stdFace^\Delta$ is maximal,
	we have 
	$\minFace(y+y', \psdG{G}) = \psdG{G}$, which implies that 
	$y+y'$ is positive definite and, in particular, has rank $n$.
	Because $G$ is chordal, extreme rays correspond to rank one matrices (\cite[Theorem~2.3]{AHM88}), so $y'$ has rank one, which implies that 
	$y$ has rank~$n-1$.
	
	Finally, since $\ker w \supseteq \text{im}\, y$ holds, 
	we have $\rank w \leq 1$, which in view of $w \neq 0$, leads to $\rank w = 1$. This tells us that $r = 1$.
	
	For the converse, suppose that $r = 1$ holds and $x_1 + x_2 = x$, for nonzero $x_1,x_2 \in \pi_G(\psdcone{n})$.
	Let $w_i$ be a PSD completion of $x_i$, which must also be nonzero because $x_i$ is nonzero. Then 
	$w_1 + w_2$ is a PSD completion of $x$, so 
	$w_1 + w_2$ has rank one.
	This can only happen if $w_1 = \alpha w_2$ for some $\alpha \geq 0$, which implies that 
	$x_1 = \alpha x_2$ and $x_1, x_2$ are in the half-line generated by $x$, so $x$ is an extreme ray.
\end{proof}
Beyond Proposition~\ref{prop:chord_complet}, we do not know if items~$(i)$ and $(ii)$ of Theorem~\ref{theo:completion} have analogues for the case where $G$ is only chordal, although Remark~\ref{rem:failure} may be an indication that an analogue may not exist.

Moving on, as discussed previously, faces of homogeneous cones are projectionally exposed but it may happen that the projection cannot be taken to be self-adjoint. 
We will observe here that if $G$ is chordal, 
then $\pi_G(\psdcone{n})$ is orthogonally projectionally exposed under the trace inner product if and only if $G$ is a disjoint union of cliques. 

\begin{proposition}\label{prop:orth_proj}
Let $G = (V,E)$ be a chordal graph, then the following are equivalent.
\begin{enumerate}[$(i)$]
	\item $\pi_G(\psdcone{n})$ is orthogonally projectionally exposed under the trace inner product.
	\item $\pi_G(\psdcone{n}) = \psdG{G}$ (i.e., $\psdG{G}$ is self-dual).
	\item $G$ is a disjoint union of cliques.
\end{enumerate}
\end{proposition}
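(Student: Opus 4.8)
The plan is to prove the four implications $(iii)\Rightarrow(ii)$, $(ii)\Rightarrow(iii)$, $(iii)\Rightarrow(i)$ and $(i)\Rightarrow(iii)$. The two implications out of $(iii)$ are routine; the two into $(iii)$ I will establish by contraposition, using the elementary fact that a graph which is not a disjoint union of cliques has a connected non-complete component, hence a pair of vertices $i,k$ at distance two with a common neighbour $j$, so that $\{i,j,k\}$ induces a path with $\{i,k\}\notin E$. For $(iii)\Rightarrow(ii)$: if $G$ is the disjoint union of cliques on $V_1,\dots,V_p$, then $\mathcal{S}(G)$ is the orthogonal (for the trace inner product) direct sum of the full matrix spaces $\mathcal{S}^{|V_m|}$ and $\psdG{G}=\bigoplus_m\psdcone{|V_m|}$; each factor is self-dual, hence so is $\psdG{G}$, so $\pi_G(\psdcone{n})=\psdG{G}^{*}=\psdG{G}$. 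For $(ii)\Rightarrow(iii)$ I fix an induced path $i$–$j$–$k$ as above and let $x^{\star}\in\mathcal{S}(G)$ have as its only nonzero entries $x^{\star}_{ii}=x^{\star}_{jj}=x^{\star}_{kk}=1$, $x^{\star}_{ij}=x^{\star}_{ji}=1$ and $x^{\star}_{jk}=x^{\star}_{kj}=-1$. The $n\times n$ PSD matrix with $\{i,j,k\}$-block $\left(\begin{smallmatrix}1&1&-1\\ 1&1&-1\\ -1&-1&1\end{smallmatrix}\right)$ and all other entries zero is a completion of $x^{\star}$ (its only out-of-pattern entry $(i,k)$ is erased by $\pi_G$), so $x^{\star}\in\pi_G(\psdcone{n})$; but the $\{i,j,k\}$-principal submatrix of $x^{\star}$ is $\left(\begin{smallmatrix}1&1&0\\ 1&1&-1\\ 0&-1&1\end{smallmatrix}\right)$, with determinant $-1$, so $x^{\star}\notin\psdcone{n}$ and thus $x^{\star}\notin\psdG{G}$; hence $\psdG{G}\subsetneq\pi_G(\psdcone{n})$ and $(ii)$ fails.

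For $(iii)\Rightarrow(i)$: by $(iii)\Rightarrow(ii)$ the cone $\pi_G(\psdcone{n})=\psdG{G}=\bigoplus_m\psdcone{|V_m|}$ is an orthogonal direct product of PSD cones. Each $\psdcone{m}$ is o.p.-exposed under its trace inner product, since the projection $\mathcal{P}(x)=vxv$ onto a face of $\psdcone{m}$ — with $v$ the symmetric idempotent from \eqref{eq:psd_faces} — is self-adjoint for that inner product (see also \cite[Proposition~33]{L21}). As a face of a direct product of closed convex cones is a product of faces, and the direct sum of the corresponding orthogonal projections is again an orthogonal projection onto that product, $\pi_G(\psdcone{n})$ is o.p.-exposed under the trace inner product of $\mathcal{S}(G)$.

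The main work is $(i)\Rightarrow(iii)$, which I will again prove by contraposition: suppose $G$ is chordal but not a disjoint union of cliques, and fix an induced three-vertex path $H$ on $\{i,j,k\}$ with middle vertex $j$; let $P_3$ denote the abstract path on three vertices. I will use two facts. First, $\pi_H(\psdcone{n})$ is a face of $\pi_G(\psdcone{n})$: it is the intersection of $\pi_G(\psdcone{n})$ with the exposed faces $\{x\mid x_{\ell\ell}=0\}$, $\ell\notin\{i,j,k\}$ (exposed by $E_{\ell\ell}\in\psdG{G}=\pi_G(\psdcone{n})^{*}$), and any $x\in\pi_G(\psdcone{n})$ with $x_{\ell\ell}=0$ for all $\ell\notin\{i,j,k\}$ is supported on indices in $\{i,j,k\}$ (its PSD completion vanishes on those rows and columns), so — because $H$ is an \emph{induced} subgraph — restriction to that block identifies $\pi_H(\psdcone{n})$, with the trace inner product, isometrically with $\pi_{P_3}(\psdcone{3})$. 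Second, a face of a cone that is o.p.-exposed under a fixed inner product is itself o.p.-exposed under the restricted inner product: if $\stdFace'\face\stdFace\face\stdCone$ and $\mathcal{P}$ is the self-adjoint idempotent with $\mathcal{P}(\stdCone)=\stdFace'$, then $\mathcal{P}(\stdFace)\subseteq\stdFace'\subseteq\stdFace$ makes $\spanVec\stdFace$ invariant under $\mathcal{P}$, and $\mathcal{P}$ restricted to $\spanVec\stdFace$ is a self-adjoint idempotent carrying $\stdFace$ onto $\stdFace'$. Together these reduce the task to showing that $\pi_{P_3}(\psdcone{3})$ is \emph{not} o.p.-exposed under its trace inner product.

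To finish, write $P_3$ with middle vertex $2$, so that $\mathcal{S}(P_3)$ consists of the matrices $\left(\begin{smallmatrix}a&b&0\\ b&c&d\\ 0&d&e\end{smallmatrix}\right)$, and put $x_0\coloneqq\pi_{P_3}(vv^{\T})$ with $v=(1,1,1)^{\T}$, i.e.\ $x_0=\left(\begin{smallmatrix}1&1&0\\ 1&1&1\\ 0&1&1\end{smallmatrix}\right)$. A short determinant computation shows that $vv^{\T}$ is the \emph{only} PSD completion of $x_0$, so by Proposition~\ref{prop:chord_complet} (or by lifting a decomposition $x_0=a+b$ with $a,b\in\pi_{P_3}(\psdcone{3})$ to PSD completions whose sum is $vv^{\T}$) the element $x_0$ generates an extreme ray of $\pi_{P_3}(\psdcone{3})$. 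If that ray were o.p.-exposed, the exposing projection would have one-dimensional range $\spanVec\{x_0\}$, hence would be the rank-one orthogonal projection $x\mapsto\big(\inProd{x}{x_0}/\inProd{x_0}{x_0}\big)x_0$; for its image on $\pi_{P_3}(\psdcone{3})$ to be the ray $\Re_{\ge 0}x_0$ one would need $\inProd{x}{x_0}\ge 0$ for all $x\in\pi_{P_3}(\psdcone{3})$, i.e.\ $x_0\in\pi_{P_3}(\psdcone{3})^{*}=\psdG{P_3}$; but $\det x_0=-1<0$, so $x_0$ is not PSD — a contradiction. I expect the main obstacle to be not this final computation but the bookkeeping in the reduction: checking cleanly that $\pi_H(\psdcone{n})$ is a face of $\pi_G(\psdcone{n})$ that is \emph{isometric} (not merely linearly isomorphic) to the three-vertex model, which is where the hypotheses ``$H$ is induced'' and ``the inner product is the trace inner product'' are both used.
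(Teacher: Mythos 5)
Your proposal is correct, but it reaches the two implications into $(iii)$ by a genuinely different route than the paper. The paper gets $(i)\Rightarrow(ii)$ in one line by citing the general fact that a pointed orthogonally projectionally exposed cone must be contained in its dual \cite[Proposition~2.2]{BLP87}, and then $(ii)\Rightarrow(iii)$ by citing \cite[Theorem~4.2]{GIL24}; the cycle closes with the same direct-product argument for $(iii)\Rightarrow(i)$ that you give. You instead argue both implications into $(iii)$ by contraposition with explicit $3\times 3$ certificates living on an induced path $i$--$j$--$k$ (which exists precisely when $G$ is not a cluster graph): for $(ii)\Rightarrow(iii)$ a completable matrix with a negative $3\times3$ minor, and for $(i)\Rightarrow(iii)$ the extreme ray generated by $\pi_{P_3}(vv^{\T})$, whose only candidate self-adjoint exposing idempotent is the rank-one orthogonal projection onto its span, forcing the generator into the dual cone — impossible since its determinant is $-1$. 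The supporting steps all check out: the uniqueness of the completion of $x_0$ (the determinant of the completion is $-(t-1)^2$), the identification of $\pi_H(\psdcone{n})$ as a face isometric to $\pi_{P_3}(\psdcone{3})$, and the inheritance of o.p.-exposedness by faces (the surjectivity of $\mathcal{P}|_{\spanVec\stdFace}$ onto $\stdFace'$ follows because an idempotent with image $\stdFace'$ fixes $\stdFace'$ pointwise — worth one extra sentence). What each approach buys: the paper's proof is shorter but leans on two external results, whereas yours is self-contained and elementary; your $(i)\Rightarrow(iii)$ argument is in effect a localized instance of the \cite{BLP87} containment-in-the-dual phenomenon, specialized to a single extreme ray, and your $(ii)\Rightarrow(iii)$ shows that this direction does not actually need chordality at all.
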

\begin{proof}
 We start by observing that 
a pointed cone that is orthogonally projectionally 
exposed must be contained in its  dual, e.g., 
\cite[Proposition~2.2]{BLP87}.	
Therefore, if $\pi_G(\psdcone{n})$ is orthogonally projectionally exposed, then 
$\pi_G(\psdcone{n}) \subseteq \pi_G(\psdcone{n})^* = \psdG{G}$. However, $\psdG{G}\subseteq \pi_G(\psdcone{n})$ always holds, so we have $\pi_G(\psdcone{n}) = \psdG{G}$.
This shows $(i) \Rightarrow (ii)$.
	
If $\pi_G(\psdcone{n}) = \psdG{G}$, then,
since $G$ is chordal, $\pi_G(\psdcone{n})$ is generated by PSD rank one matrices and $G$ must be a disjoint union of cliques by \cite[Theorem~4.2]{GIL24}. 
This shows $(ii) \Rightarrow (iii)$.

Finally, if $G$ is a disjoint union of cliques then, up to permutations of rows and columns, 
$\pi_G(\psdcone{n})$ is a direct product of positive semidefinite cones, so it is orthogonally projectionally exposed under the trace inner product, which shows the implication 
$(iii) \Rightarrow (i)$.
\end{proof}
Proposition~\ref{prop:orth_proj} does not exclude the possibility that some homogeneous chordal $G$ that is not a disjoint union of cliques be such that $\pi_G(\psdcone{n})$ is orthogonally projectionally exposed \emph{under a different inner product}.
Also, we mention in passing that the implication $(ii) \Rightarrow (i)$ is a special case of a more general phenomenon, see \cite[Proposition~4.18]{GL23}.

\section{Conclusion and open questions}\label{sec:conc}
In this paper our main goal was to elucidate the facial structure of homogeneous cones under the T-algebra framework and discuss  applications to homogeneous chordality. 

Here we briefly describe a potential practical application. 
When solving a semidefinite program that fails Slater's condition, one of the default ways of restoring constraint qualifications and reducing the size of the problem is through facial reduction \cite{BW81,Pa13_2,WM13}.
This is done by reformulating the problem over a smaller face of the cone, which leads to a smaller SDP. 
The reformulation typically uses the projectional exposedness of $\psdcone{n}$ and expressions such as \eqref{eq:psd_faces}.  
Unfortunately, this often destroys sparsity if  $q$ is as in \eqref{eq:psd_faces}, since $qxq^\T$ does not necessarily satisfy any sparsity pattern present in $x$. 
Therefore, one must face the inevitable choice of either keeping sparsity or regularizing the problem.
From a theoretical point of view, the issue is that  the map $x \mapsto qxq^\T$ is an automorphism of $\psdcone{n}$ but not necessarily an automorphism of 
the PSD slice that corresponds to the sparsity pattern of the problem.

However, if the sparsity pattern is homogeneous chordal, we have the tantalizing option of both regularizing the problem \emph{and} preserving sparsity. If the feasible region of some SDP is contained in a proper face of a $\psdG{G}$ as in Theorem~\ref{theo:h_chord_faces}, we can use an automorphism of $\psdG{G}$ to reformulate the problem over a principal face, which is an operation that indeed preserves sparsity. 
Furthermore, since principal faces correspond to zeroing rows and columns, we can further reformulate the problem as a smaller dimensional SDP. 
For example, if the feasible region is contained in the face $\stdFace$ as in \eqref{eq:psdgf}, we can reformulate the problem as a $2\times 2$ SDP that still respect the same sparsity pattern by making use of the map $Q_l$  that takes $\stdFace$ to the corresponding rank~$2$ principal face. 
Similar considerations apply to problems over $\pi_G(\psdcone{n})$, in view of Theorem~\ref{theo:completion}. 
It would be interesting to explore these possibilities in future works.

We conclude with some open questions. 

\begin{enumerate}[$(a)$]
	\item  \emph{Suppose that $G = (V,E)$ is chordal but not necessarily homogeneous. Are $\psdG{G}$ and $\pi_G(\psdcone{n})$ projectionally exposed?
	What are the connections between the induced subgraphs of  $G$ and the faces of $\psdG{G}$?} 
	We already saw in Remark~\ref{rem:failure} that, in contrast to homogeneous chordal graphs, $\psdG{G}$ may have faces that are not linearly isomorphic to faces arising from induced subgraphs. Nevertheless, it would be interesting to clarify the extent to which the graph structure of $G$ influences the facial structure of $\psdG{G}$.
	
	\item We proved homogeneous cones are projectionally exposed but they may fail to be orthogonally projectionally exposed \emph{under the inner product that comes from the T-algebra structure}, see Proposition~\ref{prop:orth_proj}.
	We do not know whether an arbitrary homogeneous cone can become orthogonally projectionally exposed by changing the inner product appropriately.
	Currently, the only homogeneous cones known to be orthogonally projectionally exposed are symmetric cones \cite[Proposition~33]{L21}, are those the only ones?
	\item Homogeneous cones are spectrahedral and, therefore, are hyperbolicity cones as well. 
	These are two classes of cones that strictly contain homogeneous cones. Are they projectionally exposed?
	
\end{enumerate}
Regarding (b), this problem is related to finding nontrivial conditions that ensure that a homogeneous cone is actually symmetric, e.g., \cite{YN16}. 
Related to that, motivated by certain considerations in quantum physics that are out-of-scope here,
it was recently shown that if the automorphism group of a homogeneous cone acts transitively on the set of extreme rays, then it must be a symmetric cone \cite[Theorem~2]{BUW23}.
{\small
	\section*{Acknowledgements}
	The support of the Institute of Statistical Mathematics in the form of a visiting professorship for the first author is gratefully acknowledged. 
	We also thank Hideyuki Ishi for several helpful comments and Mitsuhiro Nishijima for carefully reading this manuscript.
}

\bibliographystyle{alpha}
\bibliography{bib}

\end{document}